\setlist{nosep}
\setlist[enumerate, 1]{label={\rm (}\emph{\alph*}{\rm )}}
\setlist[enumerate, 2]{label={\rm (}\emph{\alph{enumi}.\arabic*}{\rm )}}
\else\@footnotetext{\@setkeywords}\fi}
\else\@footnotetext{\@setkeywords}\fi
\else\@footnotetext{\@setaltkeywords}\fi}
\patchcmd{\smf@captionsfrench}{2000}{2020}{}{}
\newcommand\nohyphen\hbox
\newif\iffr
\newcommand\fren[2]{\iffr #1\else #2\fi}
\newcommand\journal\emph
\renewcommand\and{\fren{et}{and}\xspace}
\theoremstyle{plain}
\newtheorem{thm}{Th\'eor\`eme}[section]
\newtheorem*{thm*}{Th\'eor\`eme}
\newtheorem{lemma}[thm]{Lemme}
\newtheorem{coro}[thm]{Corollaire}
\newtheorem*{coro*}{Corollaire}
\newtheorem{lemmamulti}[thm]{Lemme multi-simplicial}
\newtheorem{lemma_bi_simpl}[thm]{Lemme bisimplicial}
\theoremstyle{remark}
\theoremstyle{definition}
\newtheorem*{notations}{Notations et terminologie}
\newtheorem*{organisation}{Organisation de l'article}
\theoremstyle{plain}
\numberwithin{equation}{thm}
\newif\ifsection
\preto\section{\sectiontrue}
\preto\subsection{\sectionfalse}
\xapptocmd\@sect{%
  \ifsection
    \numberwithin{thm}{section}
  \else
    \numberwithin{thm}{subsection}
  \fi
  \setcounter{thm}{0}\relax}
  {}{}
\newcommand\letenv[2]{%
\expandafter\expandafter\expandafter\let\expandafter\expandafter
\csname #1\endcsname\csname #2\endcsname
\expandafter\expandafter\expandafter\let\expandafter\expandafter
\csname end#1\endcsname\csname end#2\endcsname
}
\let\forlang\emph
\let\ndef\emph
\let\nbd\nobreakdash
\def\xpoint{\futurelet\@let@token\@xpoint}
\def\@xpoint{%
  \ifx\@let@token.\else
    .%
  \fi
  \xspace}
\newcommand\resp[1]{{\rm (resp.}~#1{\rm )}}
\newcommand\zbox[1]{\makebox[0pt][l]{#1}}
\newcommand\pbox[1]{\zbox{\,\,#1}}
\newcommand\bpbox[1]{\zbox{\quad#1}}
\newcommand\quadtext[1]{\quad\text{#1}\quad}
\newcommand\quadet{\quadtext{et}}
\newcommand\taut{\widetilde{\tau}}
\renewcommand\le\leqslant
\renewcommand\ge\geqslant
\renewcommand\epsilon\varepsilon
\renewcommand\phi\varphi
\let\ot\leftarrow
\let\hookto\hookrightarrow
\let\xto\xrightarrow
\let\xot\xleftarrow
\newcommand\tod\Rightarrow
\newcommand\otd\Leftarrow
\newcommand\tot\Rrightarrow
\newcommand\e{\epsilon}
\newcommand{\sauf}{\mathchoice{\raise 1.8pt\hbox{${\scriptstyle\kern
2.5pt\smallsetminus\kern 2.5pt}$}}{\raise 1.8pt\hbox{${\scriptstyle\kern
2.5pt\smallsetminus\kern 2.5pt}$}}{\raise
1.8pt\hbox{${\scriptscriptstyle\kern 1.5pt\smallsetminus\kern
1.5pt}$}}{\raise 1.8pt\hbox{${\scriptscriptstyle\kern
1.5pt\smallsetminus\kern 1.5pt}$}}}
\newcommand\HomOpLax{\Homi_{\mathrm{oplax}}}
\newcommand\HomLax{\Homi_{\mathrm{lax}}}
\newcommand\Z{\mathbb{Z}}
\newcommand\N{\mathbb{N}}
\let\limind\varinjlim
\let\C\relax
\newcommand\C{\mathcal{C}}
\newcommand\M{\mathcal{M}}
\newcommand{\Hom}{\operatorname{\mathsf{Hom}}}
\newcommand{\Homi}{\operatorname{\kern.5truept\underline{\kern-.5truept\mathsf{Hom}\kern-.5truept}\kern1truept}}
\newcommand{\pref}[1]{{\widehat{ #1 }}}
\newcommand\id[1]{1_{#1}}
\renewcommand\o\circ
\newcommand{\Ob}{\operatorname{\mathsf{Ob}}}
\newcommand{\Ens}{{\mathcal{E}\mspace{-2.mu}\it{ns}}}
\newcommand{\Cat}{{\mathcal{C}\mspace{-2.mu}\it{at}}}
\newcommand{\Ho}{\operatorname{Ho}}
\newcommand{\Hot}{{\mathsf{Hot}}}
\newcommand{\nCat}[1]{%
\mathchoice
  {\hbox{$#1$\kern1pt-\kern1pt$\Cat$}}
  {\hbox{$#1$\kern1pt-\kern1pt$\Cat$}}
  {\hbox{$\scriptstyle #1$\raisebox{-0.5pt}{-}$\scriptstyle \Cat$}}
  {TODO}%
}
\let\CCat\nCat
\newcommand{\nGpd}[1]{%
\mathchoice
  {\hbox{$#1$-\kern1pt$\Gpd$}}
  {\hbox{$#1$-\kern1pt$\Gpd$}}
  {\hbox{$\scriptstyle #1$\raisebox{-0.5pt}{-}$\scriptstyle \Gpd$}}
  {TODO}%
}
\newcommand{\ooCat}{\nCat{\infty}}
\newcommand{\coCat}{{\it{co}\mathcal{C}\mspace{-2.mu}\it{at}}}
\newcommand{\ncoCat}[1]{{#1}\hbox{\protect\nbd-}\kern1pt\coCat}
\newcommand\oo[1]{$\infty$\=/}
\newcommand\ponct{e}
\newcommand\Dn[1]{\mathrm{D}_{#1}}
\newcommand\cDelta{\mathbf{\Delta}}
\newcommand\Deltan[1]{\varDelta_{#1}}
\newcommand\cO{\mathcal{O}}
\newcommand\On[1]{\mathcal{O}_{#1}}
\newcommand{\tr}[2]{\mathchoice
  {#1\raise -1.8pt\vbox{\hbox{$\kern -.8pt/#2$}}}
  {#1\raise -1.8pt\vbox{\hbox{$\kern -.8pt/#2$}}\kern .8pt}
  {#1\raise -1.8pt\vbox{\hbox{$\scriptstyle\kern -.8pt /#2$}}}
  {#1\raise -1.8pt\vbox{\hbox{$\scriptscriptstyle\kern -.8pt /#2$}}}}
\newcommand{\trm}[2]{\mathchoice
  {#1\raise -1.8pt\vbox{\hbox{$\kern -.8pt\!\stackrel{\,\rm co}{/}\!\!#2$}}}
  {#1\raise -1.8pt\vbox{\hbox{$\kern -.8pt\!\stackrel{\,\rm co}{/}\!\!#2$}}\kern .8pt}
  {#1\raise -1.8pt\vbox{\hbox{$\scriptstyle\kern -.8pt\!\stackrel{\,\,\rm co}{/}\!\!#2$}}\kern .8pt}
  {TODO}}
\newcommand{\trto}[2]{\mathchoice
  {#1\raise -1.8pt\vbox{\hbox{$\kern -.8pt\!\stackrel{\,\,t \rm o}{/}\!\!\!#2$}}}
  {#1\raise -1.8pt\vbox{\hbox{$\kern -.8pt\!\stackrel{\,\,t \rm o}{/}\!\!\!#2$}}\kern .8pt}
  {#1\raise -1.8pt\vbox{\hbox{$\scriptstyle\kern -.8pt\!\stackrel{\,\,t \rm o}{/}\!\!\!#2$}}\kern .8pt}
  {TODO}}
\newcommand{\cotr}[2]{\mathchoice
  {\raise -1.8pt\vbox{\hbox{$#2\backslash$}}#1}
  {\raise -1.8pt\vbox{\hbox{$#2\backslash$}}#1}
  {\raise -1.8pt\vbox{\hbox{$\scriptstyle#2\backslash$}}#1}
  {\raise -1.8pt\vbox{\hbox{$\scriptscriptstyle#2\backslash$}}#1}}
\newcommand{\cotrm}[2]{\mathchoice
  {\raise -1.8pt\vbox{\hbox{$#2\!\stackrel{\!\rm co}{\backslash}$}}#1}
  {\raise -1.8pt\vbox{\hbox{$#2\!\stackrel{\!\rm co}{\backslash}$}}#1}
  {\raise -1.8pt\vbox{\hbox{$\scriptstyle#2\!\stackrel{\!\rm co}{\backslash}$}}#1}
  {TODO}}
\newcommand\Cyl\Gamma
\newcommand\NS{N}
\newcommand\Nsn[1]{N_{\cDelta^{\mkern-2mu #1}}}
\newcommand\NC{N_\Theta}
\newcommand\NCn[1]{N_{\Theta_{#1}}}
\newcommand\wrDelta[1]{\cDelta \wr #1}
\newcommand\otimest{\oslash}
\newcommand\Homidt{\mathop{\Homi^\otimest_{\mathrm{oplax}}}}
\newcommand\Homigt{\mathop{\Homi^\otimest_{\mathrm{lax}}}}
\newcommand{\Cda}{\mathcal{C}_{\mathrm{da}}}
\newcommand\Zdec{\underline{\mathbb{Z'}\kern -2.5pt}\kern 2pt}
\newcommand{\pushoutcorner}[1][dr]{\save*!/#1+1.5pc/#1:(1,-1)@^{|-}\restore}
\newcommand{\pullbackcorner}[1][dr]{\save*!/#1-1.5pc/#1:(-1,1)@^{|-}\restore}
\newcommand\W{\mathcal{W}}
\author{Dimitri Ara}
\address{Aix~Marseille~Univ,~CNRS,~I2M,~Marseille,~France}
\email{dimitri.ara@univ-amu.fr}
\urladdr{\href{http://www.i2m.univ-amu.fr/perso/dimitri.ara/}{http://www.i2m.univ-amu.fr/perso/dimitri.ara/}}
\author{Georges Maltsiniotis}
\address{%
Université Paris Cité, CNRS, IMJ, Paris, France}
\email{georges.maltsiniotis@imj-prg.fr}
\urladdr{\href{https://webusers.imj-prg.fr/~georges.maltsiniotis/}
{https://webusers.imj-prg.fr/%
\raise -3.3pt\vbox{\hbox{$\widetilde{ \ }\,$}}georges.maltsiniotis/}}
\title{Comparaison des nerfs $n$-catégoriques}
\dedicatory{À Ondine, de l'automnal résultat bisimplicial à l'éternité. D.}
\begin{document}

\frontmatter

\begin{abstract}
  Le but de cet article est de comparer trois foncteurs nerf pour les
  $n$\=/catégories strictes : le nerf de Street, le nerf cellulaire et le nerf
  multi-simplicial. Nous montrons que ces trois foncteurs sont équivalents
  en un sens adéquat. En particulier, les classes d'équivalences faibles
  $n$-catégoriques qu'ils définissent coïncident : ce sont les équivalences
  de Thomason. On donne deux applications de ce résultat : la première
  affirme qu'une équivalence de type Dwyer-Kan pour les équivalences de
  Thomason est une équivalence de Thomason ; la seconde, fondamentale, est
  la stabilité de la classe des équivalences de Thomason par les dualités de la
  catégorie des $n$-catégories strictes.
\end{abstract}

\begin{altabstract}
  Our aim is to compare three nerve functors for strict $n$\=/categories:
  the Street nerve, the cellular nerve and the multi-simplicial nerve. We
  show that these three functors are equivalent in some appropriate sense.
  In particular, the classes of $n$-categorical weak equivalences that they
  define coincide: they are the Thomason equivalences. We give two
  applications of this result: the first one states that a Dyer-Kan-type
  equivalence for Thomason equivalences is a Thomason equivalence; the
  second one, fundamental, is the stability of the class of Thomason
  equivalences under the dualities of the category of strict $n$-categories.
\end{altabstract}

\subjclass{18E35, 18N30, 18N40, 18N50, 55P10, 55P15, 55U35, 55U10}

\keywords{\oo-catégories strictes, ensembles cellulaires, ensembles
simpliciaux, équivalences de Thomason, foncteurs nerf, nerf cellulaire, nerf
de Street, nerf multi-simplicial, produit tensoriel de Gray, transformations
lax}
\altkeywords{strict \oo-categories, cellular sets, simplicial sets, Thomason
equivalences, nerve functors, cellular nerve, Street nerve, multi-simplicial
nerve, Gray tensor product, lax transformations}

\maketitle

% \tableofcontents

\mainmatter

% \pagebreak

\section*{Introduction}

Ce travail s'inscrit dans une série d'articles sur la théorie de l'homotopie
des \oo-catégories strictes, série constituée actuellement de nos articles
\cite{AraMaltsiNThom}, \cite{Ara2Cat}, \cite{AraMaltsiJoint},
\cite{AraMaltsiThmAI}, \cite{AraMaltsiThmAII}, \cite{AraThmB},
\cite{AraMaltsiCondE} et de celui de Gagna~\cite{Gagna}. Cette théorie de
l'homotopie des \oo-catégories strictes est inspirée de la théorie de
l'homotopie des petites catégories de Grothendieck, introduite dans
\cite{GrothPS} et développée dans \cite{Maltsi} et \cite{Cisinski}, qui
consiste en l'étude de $\Cat$, la catégorie des petites catégories, munie de
la classe des équivalences de Thomason, c'est-à-dire des foncteurs dont
l'image par le foncteur nerf $N : \Cat \to \pref{\cDelta}$, à valeurs dans
les ensembles simpliciaux, est une équivalence d'homotopie faible. Un
résultat fondamental de Quillen, rédigé par Illusie dans sa
thèse~\cite{Illusie}, affirme que le foncteur nerf induit une équivalence
entre la localisée de $\Cat$ par rapport aux équivalences de Thomason et la
catégorie homotopique des ensembles simpliciaux, c'est-à-dire la catégorie
des types d'homotopie.  Ainsi, étudier les petites catégories munies des
équivalences de Thomason, c'est étudier les types d'homotopie sous un jour
nouveau.

Un des désavantages de ce modèle des petites catégories pour représenter les types
d'homotopie est que les petites catégories représentant un type d'homotopie
donné sont souvent peu naturelles et de nature peu géométrique. C'est pourquoi nous
proposons dans le projet dont est issu cet article de remplacer les
catégories, par nature $1$-dimensionnelles, par les \oo-catégories strictes.
Nous expliquons longuement nos motivations pour ce modèle dans
l'introduction de \cite{AraMaltsiThmAI}.

Afin de généraliser la classe des équivalences de Thomason de $\Cat$ à
$\ooCat$, la catégorie des \oo-catégories strictes et des \oo-foncteurs
stricts, on a \forlang{a priori} besoin de disposer d'un foncteur nerf de
source $\ooCat$. Or, on dispose d'au moins deux foncteurs nerf naturels pour
les \oo-catégories strictes :
\begin{enumerate}
  \item Le nerf de Street
  \[ \NS : \ooCat \to \pref{\cDelta}, \]
  foncteur nerf à valeurs dans les ensembles simpliciaux, introduit par
  Street dans~\cite{StreetOrient}, basé sur son objet cosimplicial $\cO :
  \cDelta \to \ooCat$ des orientaux, version \oo-catégorique des simplexes.
  \item Le nerf cellulaire
  \[ \NC : \ooCat \to \pref{\Theta}, \]
  à valeurs dans les ensembles cellulaires, c'est-à-dire les préfaisceaux
  sur la catégorie $\Theta$ de Joyal \cite{JoyalTheta},
  qui est induit par l'inclusion canonique $\Theta \hookto \ooCat$ et est
  pleinement fidèle.
\end{enumerate}
De plus, lorsque l'on se restreint, pour $n \ge 0$, à $\nCat{n}$, la
sous-catégorie pleine de~$\ooCat$ formée des $n$-catégories strictes, on
dispose d'un troisième foncteur nerf naturel :
\begin{enumerate}[resume]
  \item Le nerf $n$-simplicial (ou multi-simplicial, si on ne veut pas
  préciser $n$)
  \[ \Nsn{n} : \nCat{n} \to \pref{\cDelta^n}, \]
  à valeurs dans les ensembles $n$-simpliciaux,
  qui s'obtient par un usage itéré du nerf usuel.
\end{enumerate}
\goodbreak
Dans nos précédents travaux, nous avons privilégié le choix du nerf de
Street en définissant les équivalences de Thomason \oo-catégoriques comme les
\oo-foncteurs dont le nerf de Street est une équivalence d'homotopie faible
simpliciale. Néanmoins, le nerf cellulaire permet également de
définir une notion d'équivalence faible \oo-catégorique, et de même pour le
nerf multi-simplicial, du moins lorsque l'on se restreint aux $n$\=/catégories
pour $n$ fini.

\bigbreak

Le but du présent article est de montrer que les notions d'équivalences
faibles définies par ces trois foncteurs nerf coïncident,
en se restreignant aux $n$-catégories pour le troisième.  Plus
précisément, on établit que ces trois foncteurs nerf sont équivalents,
toujours avec la même restriction pour le troisième, en un sens que
l'on va maintenant préciser.

Si $A$ est une petite catégorie, on notera $\pref{A}$ la catégorie des
préfaisceaux sur $A$ et $i_A : \pref{A} \to \Cat$ le foncteur qui associe à
un préfaisceau $F$ sur $A$ sa catégorie des éléments~$\tr{A}{F}$. En
composant par le foncteur nerf, suivi du foncteur canonique~$p :
\pref{\cDelta} \to \Hot$ vers la catégorie homotopique des ensembles
simpliciaux, on obtient un foncteur $k_A : \pref{A} \to \Hot$ associant à
tout préfaisceau sur $A$ un type d'homotopie. Dans le cas $A = \cDelta$, en
vertu d'un théorème de Quillen, le foncteur $k_\cDelta$ est canoniquement
isomorphe au foncteur~$p$.
% (Plus précisément, pour tout ensemble simplicial $X$, on dispose d'une
% équivalence d'homotopie faible $N(\tr{\cDelta}{X}) \to X$.)

\goodbreak

On peut maintenant énoncer nos principaux résultats :
\begin{theorem*}
  Le nerf de Street et le nerf cellulaire sont équivalents au sens où les
  foncteurs composés
  \[
    \ooCat \xto{\NS} \pref{\cDelta} \xto{k_{\cDelta}} \Hot
    \quadet
    \ooCat \xto{\NC} \pref{\Theta} \xto{k_{\Theta}} \Hot
  \]
  sont isomorphes.
\end{theorem*}

\begin{corollary*}
  Le nerf de Street et le nerf cellulaire définissent les mêmes équivalences
  \oo-catégoriques au sens où, si $u : A \to B$ est un \oo-foncteur, alors
  $k_\cDelta \NS u$ est un isomorphisme si et seulement si $k_\Theta \NC u$
  en est un.
\end{corollary*}

Les \oo-foncteurs $u$ vérifiant les conditions équivalentes de ce corollaire
sont les \ndef{équivalences de Thomason} \oo-catégoriques.

\begin{theorem*}
  Le nerf $n$-simplicial et le nerf de Street (et donc le nerf
  cellulaire) restreint aux $n$-catégories strictes sont équivalents au
  sens où les foncteurs composés
  \[
    \nCat{n} \xto{\Nsn{n}} \pref{\cDelta^n} \xto{k_{\cDelta^{\mkern-2mu n}}} \Hot
    \quadet
    \nCat{n} \hookto \ooCat \xto{\NS} \pref{\cDelta} \xto{k_{\cDelta}} \Hot
  \]
  sont isomorphes.
\end{theorem*}

\begin{corollary*}
  Pour tout $n \ge 0$, le nerf $n$-simplicial et le nerf de Street (et donc le
  nerf cellulaire) définissent les mêmes équivalences faibles
  $n$-catégoriques.
\end{corollary*}

Dans le cas $n = 2$, ce dernier théorème et son corollaire (sans l'addendum
sur le nerf cellulaire) sont dus à Bullejos et Cegarra \cite{BullCegGeom2Cat}.
De plus, Cegarra a généralisé ces résultats aux bicatégories avec Carrasco
et Garzón~\cite{CegNervesBicat} et aux tricatégories avec
Heredia~\cite{CegRealTri}.

\bigbreak

Disons à présent un mot sur la démonstration de ces résultats, qui est en
partie inspirée de \cite{BullCegGeom2Cat}. À une \oo-catégorie~$C$, on
associe un objet simplicial $SC$ dans $\ooCat$
dont la \oo-catégorie des $p$-simplexes est
  \[
    S_pC = \coprod_{c_0, \dots, c_p \in \Ob(C)} \Homi_C(c_0, c_1) \times \cdots
      \times \Homi_C(c_{p-1}, c_p).
  \]
En appliquant le nerf de Street argument par argument, on obtient un
ensemble bisimplicial $\NS SC$. On montre, et c'est là le cœur de ce
travail, que la diagonale de cet ensemble bisimplicial est faiblement
équivalente au nerf de Street $\NS C$ de $C$. Pour ce faire, on interprète
les $(p, q)$-simplexes de $\NS SC$ comme les \oo-foncteurs de~$\On{q}
\otimest \Deltan{p}$ vers~$C$, où $\On{q}$ est le $q$-ième oriental,
$\Deltan{p}$ la catégorie correspondant à l'ensemble ordonné $\{0 < \cdots <
p\}$ et $\otimest$ est un quotient du produit de Gray $\otimes$, qu'on
introduit dans cet article.  Les \oo-foncteurs canoniques $\On{q} \to
\Deltan{0}$ et~$\On{p} \to \Deltan{p}$ induisent des applications
\[
  \NS SC_{p,q} \simeq \Hom_{\ooCat}(\On{q} \otimest \Deltan{p}, C)
  \xto{}
  \Hom_{\ooCat}(\On{q} \otimest \On{p}, C)
  \xot{}
  \Hom_{\ooCat}(\Deltan{0} \otimest \On{p}, C).
\]
Or, un isomorphisme canonique $\Deltan{0} \otimest \On{p} \simeq \On{p}$
permet d'identifier l'ensemble de droite avec l'ensemble des
$p$-simplexes de $\NS C$, et on obtient l'équivalence mentionnée,
grâce à des résultats que nous établissons sur la construction $\otimest$,
en utilisant le fait que les deux \oo-foncteurs canoniques en jeu sont des
rétractions de rétracte par transformation, lax pour l'un et oplax pour
l'autre.

Une application répétée de ce résultat permet alors d'obtenir la comparaison du
nerf de Street $n$-catégorique et du nerf $n$-simplicial.

En ce qui concerne la comparaison du nerf de Street et du nerf cellulaire,
on passe par l'intermédiaire d'une comparaison du nerf $n$-cellulaire et du
nerf $n$-simplicial. Pour cela, on montre que le foncteur canonique
$\cDelta^n \to \Theta_n$, la catégorie $\Theta_n$ étant la variante
$n$-dimensionnelle de la catégorie~$\Theta$, est asphérique, au sens où il
vérifie les hypothèses du théorème A de Quillen, ce qui, par un résultat de
Grothendieck, entraîne cette comparaison intermédiaire. Par transitivité, on
obtient donc une comparaison entre le nerf de Street $n$-catégorique et le
nerf $n$-cellulaire. Le passage du cas $n$-catégorique au cas
\oo-catégorique est ensuite essentiellement formel, grâce à un résultat
général de comparaison de nerfs, basé sur la théorie des structures
d'asphéricité de Grothendieck développée dans \cite{GrothPS}.

\bigbreak

Pour finir, on présente deux applications de ces résultats de comparaison.
La première, à propos des équivalences de type Dwyer-Kan pour les
équivalences de Thomason, se prouve en utilisant notre résultat sur l'objet
simplicial en \oo-catégories strictes $SC$ :
\begin{theorem*}
  Soit $u : C \to D$ un \oo-foncteur. On suppose que
  \begin{enumerate}
    \item $u$ induit une bijection $\Ob(C) \xto{\sim} \Ob(D)$ ;
    \item pour tous objets $c$ et $c'$ de $C$, le \oo-foncteur
        \[ \Homi_C(c, c') \to \Homi_D(u(c), u(c')) \]
        induit par $u$ est une équivalence de Thomason.
  \end{enumerate}
  Alors $u$ est une équivalence de Thomason.
\end{theorem*}

La seconde est la stabilité des équivalences de Thomason par dualité,
stabilité qui s'obtient facilement dans le cas du nerf cellulaire et qu'on
peut donc transposer au nerf de Street :
\begin{theorem*}
  Soit $J$ un sous-ensemble de $\N\sauf\{0\}$
  et notons $D_J : \ooCat \to \ooCat$ le foncteur qui envoie une
  \oo-catégorie $C$ sur la \oo-catégorie obtenue à partir de $C$ en inversant
  l'orientation des $j$-cellules pour $j$ dans $J$.
  Alors un \oo-foncteur $u$ est une équivalence de Thomason si et seulement si
  $D_J(u)$ en est une.
\end{theorem*}
Cet énoncé, fondamental, montre que la classe des équivalences de Thomason
est intrinsèque à la catégorie $\ooCat$ au sens où elle est invariante par
tous les automorphismes de $\ooCat$ et ne dépend donc que de sa seule
structure de catégorie.

On tire des conséquences de ce résultat qui avaient été annoncées dans
des articles précédents. Dans \cite{AraMaltsiThmAI} et
\cite{AraMaltsiThmAII}, nous avons démontré un théorème A de Quillen
\oo-catégorique pour les « tranches au-dessous ». On montre comment on peut
en déduire, en utilisant le théorème précédent, un théorème A pour les
« tranches au-dessus », théorème qui ne peut s'obtenir par simple
dualisation de la preuve. De même, dans~\cite{AraThmB}, le premier auteur a
démontré un théorème B de Quillen \oo-catégorique pour les « tranches
au-dessous » dont on peut déduire un théorème B pour les « tranches
au-dessus ».

\begin{organisation}
  Dans la première section, on présente les trois foncteurs nerf que l'on
  comparera dans cet article : le nerf de Street~$\NS$, le nerf
  cellulaire~$\NC$ et le nerf $n$-simplicial~$\Nsn{n}$. La catégorie
  $\Theta$ de Joyal \cite{JoyalTheta}, nécessaire à la définition du nerf
  cellulaire, est introduite en utilisant la description par produit en
  couronne due à Berger~\cite{BergerWreath}.

  Dans la deuxième section, on associe à deux \oo-catégories strictes~$A$
  et~$B$ une \oo-catégorie stricte $A \otimest B$, quotient du produit de
  Gray $A \otimes B$. On montre que cette opération admet des $\Hom$
  internes $\Homidt$ et $\Homigt$, variantes des $\Hom$ internes du produit
  de Gray, qui joueront un rôle important dans la section suivante. On
  étudie les compatibilités de $\Homidt$ et $\Homigt$ avec les
  transformations oplax et lax.

  Dans la troisième section, on compare le nerf de Street des $n$-catégories
  strictes et le nerf $n$-simplicial. On associe à toute \oo-catégorie $C$
  un objet simplicial $SC$ en \oo-catégories strictes. On en déduit un
  ensemble bisimplicial $\NS SC$ dont on montre que la diagonale est
  faiblement équivalente au nerf de Street $NC$. Pour ce faire, on
  interprète les $(p, q)$-simplexes de $\NS SC$ en termes de l'opération
  $\otimest$ et on utilise les résultats de la section précédente, ainsi que
  l'existence de certains rétractes par transformation qu'on établira
  dans un appendice. En itérant ce résultat, on obtient la comparaison de
  nerfs recherchée.

  La quatrième section est consacrée à des généralités sur des techniques de
  comparaison de nerfs basée sur la théorie des structures d'asphéricité de
  Grothendieck~\cite{GrothPS}. On commence par rappeler les bases de la
  théorie de l'homotopie de Grothendieck et préciser ce qu'on entend par
  comparer des foncteurs nerf généraux. On énonce deux résultats, le premier
  dû à Grothendieck et le second provenant de ses idées, donnant des
  conditions suffisantes pour comparer de tels foncteurs nerf.

  Dans la cinquième section, on compare le nerf de Street et le nerf
  cellulaire. On commence par montrer que le foncteur canonique $\cDelta^n
  \to \Theta_n$, où $\Theta_n$ désigne la version $n$-dimensionnelle de la
  catégorie $\Theta$ de Joyal, est asphérique. On en déduit, par un
  résultat de Grothendieck, une comparaison du nerf $n$-simplicial et du
  nerf $n$-cellulaire, et donc, en vertu des résultats de la troisième
  section, du nerf de Street $n$-catégorique et du nerf $n$-simplicial. On
  obtient alors la comparaison dans le cas \oo-catégorique en utilisant un
  des résultats généraux exposés dans la section précédente.

  La sixième section est consacrée aux applications. On définit les
  équivalences de Thomason \oo-catégoriques \forlang{via} le nerf de Street.
  En utilisant le résultat bisimplicial de la troisième section, on montre
  que les équivalences de type Dwyer-Kan pour les équivalences de Thomason
  sont des équivalences de Thomason. On montre par ailleurs, en utilisant la
  comparaison du nerf de Street et du nerf cellulaire, que les équivalences
  de Thomason sont stables par les dualités de la catégorie des
  \oo-catégories strictes. On en déduit que les théorèmes A et B de Quillen
  \oo-catégoriques pour les «~tranches au-dessous~» que nous avons établis
  dans de précédents travaux entraînent des résultats analogues pour les
  «~tranches au-dessus~».

  Enfin, on termine par un appendice dont le but est de montrer l'existence
  de deux rétractions de rétracte par transformation lax ou oplax, notion
  dont on rappelle la définition, qui sont au cœur de la preuve
  bisimpliciale de la troisième section.  La première de ces rétractions,
  $\On{q} \to \On{0}$, a été introduite dans nos précédents travaux ; quant
  à la seconde, $\On{p} \to \Deltan{p}$, on construit la transformation
  oplax qui lui est associée en utilisant le théorie des complexes dirigés
  augmentés de Steiner \cite{Steiner}.
\end{organisation}

\begin{notations}
  On notera $\ooCat$ la catégorie des \oo-catégories strictes
  et des \oo-foncteurs stricts entre celles-ci. Toutes les
  \oo-catégories et tous les \oo-foncteurs apparaissant dans ce
  texte seront stricts et on se permettra donc d'omettre cet adjectif. Pour
  tout $n \ge 0$, on notera $\Dn{n}$ le $n$\=/disque, objet de $\ooCat$
  représentant le foncteur associant à une \oo-catégorie
  l'ensemble de ses $n$-cellules.
  \[
    \shorthandoff{;:}
    \Dn{0} = \bullet
    \quad\quad
    \Dn{1} = \xymatrix{\bullet \ar[r] & \bullet}
    \quad\quad
    \Dn{2} = \xymatrix@C=3pc@R=3pc{\bullet \ar@/^2.5ex/[r]_{}="0"
      \ar@/_2.5ex/[r]_{}="1"
      \ar@2"0";"1"
    &  \bullet}
    \qquad
    \Dn{3} = \xymatrix@C=3pc@R=3pc{\bullet \ar@/^3ex/[r]_(.47){}="0"^(.53){}="10"
      \ar@/_3ex/[r]_(.47){}="1"^(.53){}="11"
      \ar@<2ex>@2"0";"1"_{}="2" \ar@<-2ex>@2"10";"11"^{}="3"
      \ar@3"3";"2"_{}
    &  \bullet
    }
  \]
  Si $C$ est une \oo-catégorie et $c$ et $c'$ sont deux objets de $C$, on
  notera $\Homi_C(c, c')$ la \oo-catégorie des $j$-cellules de $C$, pour $j
  > 0$, de $0$-source $c$ et de $0$-but $c'$. Plus généralement, pour~$i \ge
  0$, si $x$ et $y$ sont deux $i$-cellules parallèles de $C$, on
  notera~$\Homi_C(x, y)$ la \oo-catégorie des $j$-cellules de $C$, pour $j >
  i$, de $i$-source $x$ et de $i$-but $y$. Pour $i \ge 0$, l'ensemble des
  $i$-cellules de $C$ sera noté $C_i$. On notera parfois $\Ob(C)$ pour
  $C_0$.

  La catégorie des ensembles sera notée $\Ens$ et celle des petites
  catégories $\Cat$. Pour~$n \ge 0$, on notera $\nCat{n}$ la catégorie des
  $n$-catégories (strictes). On considérera souvent ces catégories comme des
  sous-catégories pleines de $\ooCat$. Le foncteur d'inclusion $\Ens \hookto
  \ooCat$ admet pour adjoint à droite le foncteur $\Ob : \ooCat \to \Ens$
  qui associe à une \oo-catégorie l'ensemble de ses objets, et admet un adjoint à
  gauche, qu'on notera $\pi_0 : \ooCat \to \Ens$, qui associe à une
  \oo-catégorie l'ensemble de ses composantes connexes.
\end{notations}

\section{Trois foncteurs nerf}

Dans cette section, nous allons rappeler les définitions de trois foncteurs
nerf classiques pour les $n$-catégories : le nerf de Street, le nerf
cellulaire et le nerf multi-simplicial.

\subsection{Le nerf de Street}

\begin{paragraph}
On notera $\cDelta$ la catégorie des simplexes. Rappelons que ses objets
sont les ensembles ordonnés
\[
  \Deltan{n} = \{0 < 1 < \cdots < n\},
\]
pour $n \ge 0$, et que ses morphismes sont les applications croissantes au
sens large entre ceux-ci. On considérera parfois $\cDelta$ comme une
sous-catégorie pleine de $\Cat$, et donc de~$\ooCat$.

La \ndef{catégorie des ensembles simpliciaux} est la catégorie
$\pref{\cDelta}$ des préfaisceaux sur $\cDelta$.
\end{paragraph}

\begin{paragraph}\label{paragr:def_orient_inform}
Dans~\cite{StreetOrient}, Street construit un foncteur $\cO : \cDelta \to
\ooCat$, appelé \ndef{objet cosimplicial des orientaux}. Pour $n \ge 0$, la
\oo-catégorie~$\cO(\Deltan{n})$, qui se trouve être une $n$-catégorie, est
appelée le \ndef{$n$-ième oriental} et est notée~$\On{n}$.
On renvoie le lecteur à \cite[paragraphe 3.11]{AraMaltsiThmAI}, par exemple,
pour une définition de l'objet cosimplicial des orientaux utilisant la
théorie des complexes dirigés augmentés de Steiner~\cite{Steiner}. Voici une
représentation graphique des premiers orientaux :
  \[
    \shorthandoff{;}
    \On{0} = \Dn{0} = \xymatrix{\{0\}}, \qquad
    \On{1} = \Dn{1} = \xymatrix{0 \ar[r] & 1},
    \qquad
    \On{2} =
    \raisebox{1.5pc}{
    $\xymatrix{
      & 2
      \\
      0 \ar[r] \ar[ur]_{}="s" & 1 \ar[u]
      \ar@{}"s";[]_(0.05){}="ss"_(0.85){}="tt"
      \ar@2"ss";"tt"
    }$
    }
    \text{,}
  \]
  \[
    \shorthandoff{;}
    \On{3} =
    \raisebox{1.5pc}{
    $\xymatrix{
      0 \ar[r]_(0.60){}="03" \ar[d] \ar[dr]_{}="02"_(0.60){}="02'" &
      3
      &
      0 \ar[r]_(0.40){}="03'" \ar[d]_{}="t3"
        &
      3
      \\
      1 \ar[r] & 2 \ar[u]_{}="s3"
      &
      1 \ar[r] \ar[ur]_{}="13"_(0.40){}="13'" & 2 \ar[u]
      \ar@{}"s3";"t3"_(0.20){}="ss3"_(0.80){}="tt3"
      \ar@3"ss3";"tt3"
      \ar@{}"13";[]_(0.05){}="s123"_(0.85){}="t123"
      \ar@2"s123";"t123"
      \ar@{}"02";[lll]_(0.05){}="s012"_(0.85){}="t012"
      \ar@2"s012";"t012"
      \ar@{}"03'";"13'"_(0.05){}="s013"_(0.85){}="t013"
      \ar@2"s013";"t013"
      \ar@{}"03";"02'"_(0.05){}="s023"_(0.85){}="t023"
      \ar@2"s023";"t023"
      \\
    }$
    }
    \text{.}
  \]
\end{paragraph}

\begin{paragraph}
  Le nerf de Street est le foncteur nerf
  \[ \NS : \ooCat \to \pref{\cDelta} \]
  associé à l'objet cosimplicial des orientaux. Explicitement, si $C$ est
  une \oo-catégorie, on a, pour tout $n \ge 0$,
  \[ \NS(C)_n = \Hom_{\ooCat}(\On{n}, C). \]
\end{paragraph}

\subsection{Le nerf cellulaire}\ %

\medskip

Nous allons commencer par introduire la catégorie $\Theta$ de Joyal en
utilisant la définition par produit en couronne introduite par Berger dans
\cite{BergerWreath}.

\begin{paragraph}
  Soit $C$ une catégorie. On définit une catégorie $\wrDelta{C}$ de la
  manière suivante.
  \begin{itemize}
    \item Les objets de $\cDelta \wr C$ sont les $(\Deltan{n}; c_1, \dots,
      c_n)$, où $n \ge 0$ et $c_1, \dots, c_n$ sont $n$ objets de~$C$.
    \item Si $(\Deltan{n}; c_1, \dots, c_n)$ et $(\Deltan{n'}; c'_1, \dots,
      c'_{n'})$ sont deux objets de $\wrDelta{C}$, les morphismes du premier
      vers le second sont les $(\phi; (f_{i',i}))$, où $\phi :
      \Deltan{n} \to \Deltan{n'}$ est un morphisme de~$\cDelta$ et les
      $f_{i',i} : c^{}_i \to c'_{i'}$, pour $1 \le i \le n$ et $\phi(i - 1) <
      i' \le \phi(i)$, sont des morphismes de~$C$.
    \item Si
       \[
         (\Deltan{n}; c_1, \dots, c_n)
         \xto{(\phi; (f_{i',i}))}
         (\Deltan{n'}; c'_1, \dots, c'_{n'})
         \xto{(\phi'; (f'_{i'',i'}))}
         (\Deltan{n''}; c''_1, \dots, c''_{n''})
       \]
       sont deux morphismes composables, leur composé est le morphisme
       \[
         (\Deltan{n}; c_1, \dots, c_n)
         \xto{(\phi''; (f''_{i'',i}))}
         (\Deltan{n''}; c''_1, \dots, c''_{n''})
       \]
       où 
       \[ \phi'' = \phi'\phi \quadet f''_{i'',i} = f'_{i'',i'}f^{}_{i',i}, \]
       $i'$ étant l'unique entier tel que $\phi(i-1) < i' \le \phi(i)$ et
       $\phi'(i'-1) < i'' \le \phi'(i')$.
     \item Si $(\Deltan{n};c_1, \dots, c_n)$ est un objet de $\wrDelta{C}$,
       l'identité de cet objet est le morphisme $(\id{\Deltan{n}};
       (\id{c_i})_{1 \le i = i' \le n})$.
  \end{itemize}
  On vérifie qu'on obtient bien ainsi une catégorie. Notons que si $e$
  désigne la catégorie ponctuelle, la catégorie $\wrDelta{e}$ est
  canoniquement isomorphe à $\cDelta$.

  La construction $C \mapsto \wrDelta{C}$ s'étend en un
  foncteur de manière évidente : en particulier, si $u : C \to D$ est un
  foncteur, on obtient un foncteur $\wrDelta{u} : \wrDelta{C} \to
  \wrDelta{D}$.
\end{paragraph}

\begin{paragraph}\label{paragr:proj_wr}
  Si $C$ est une catégorie, on dispose de foncteurs
  \[ \Sigma : C \to \wrDelta{C} \quadet \pi : \wrDelta{C} \to \cDelta. \]
  Le premier de ces foncteurs est défini, sur les objets et les
  morphismes, par
  \[
    \begin{split}
      c & \mapsto (\Deltan{1}; c) \\
      f & \mapsto (\id{\Deltan{1}}; (f)_{i'=i=1})
    \end{split}
  \]
  et le second par
  \[
    \begin{split}
      (\Deltan{n}; c_1, \dots, c_n) & \mapsto \Deltan{n} \\
      (\phi, (f_{i',i})) & \mapsto \phi \bpbox{.}
    \end{split}
  \]
  Notons que ce second foncteur n'est autre que le foncteur $\wrDelta{p} :
  \wrDelta{C} \to \cDelta$, où $p : C \to e$ désigne l'unique foncteur
  de $C$ vers la catégorie ponctuelle $e$ et où on a identifié $\wrDelta{e}$
  et $\cDelta$.
\end{paragraph}

\begin{paragraph}
  Pour $n \ge 0$, on définit la catégorie $\Theta_n$ de Joyal par récurrence
  de la manière suivante :
  \begin{itemize}
    \item $\Theta_0$ est la catégorie ponctuelle $\ponct$ ;
    \item $\Theta_{n+1} = \wrDelta{\Theta_n}$.
  \end{itemize}
  En particulier, $\Theta_1$ est la catégorie
  $\wrDelta{\ponct}$ qu'on identifiera toujours à la
  catégorie $\cDelta$. On a donc $\Theta_1 = \cDelta$.

  On définit, toujours par récurrence sur $n$, un foncteur $i_n :
  \Theta_n \to \Theta_{n+1}$:
  \begin{itemize}
    \item le foncteur $i_0 : \Theta_0 = \ponct \to \Theta_1 = \cDelta$
      correspond à l'objet $\Deltan{0}$ de $\cDelta$ ;
    \item $i_{n+1} = \wrDelta{i_n}$.
  \end{itemize}
  On vérifie facilement que le foncteur $i_n : \Theta_n \to \Theta_{n+1}$
  est pleinement fidèle, et on le considérera comme un foncteur d'inclusion.

  La catégorie $\Theta$ de Joyal est définie par
  \[ \Theta = \limind \Theta_n. \]
  On dispose de foncteurs d'inclusion $\Theta_n \hookto \Theta$.

  Un \ndef{ensemble cellulaire} est un préfaisceau sur $\Theta$ et un
  \ndef{ensemble $n$-cellulaire} un préfaisceau sur $\Theta_n$.
\end{paragraph}

\begin{paragraph}
  Soit $\C$ une catégorie admettant des produits finis et un objet initial.
  On définit un foncteur $W_\C : \wrDelta{\C} \to \CCat{\C}$, où
  $\CCat{\C}$ désigne la catégorie des catégories enrichies dans $\C$, de la
  manière suivante.  Soit $S = (\Deltan{n}; C_1, \dots, C_n)$ un objet de
  $\wrDelta{\C}$.  On commence par définir un graphe~$G_S$ enrichi dans $\C$:
  \begin{itemize}
    \item les objets de $G_S$ sont les entiers $0, \dots, n$;
    \item si $i$ et $j$ sont deux objets, l'objet des flèches de
      $i$ vers $j$ est $C_j$ si $j = i + 1$ et l'objet initial $\emptyset$ de $\C$
      sinon.
  \end{itemize}
  Par définition, $W_\C(S)$ est la catégorie enrichie dans $\C$ librement
  engendrée par ce graphe $G_S$ enrichi dans $\C$. Explicitement, si $i$ et
  $j$ sont deux objets, on a
  \[
    \Homi_{W_\C(S)}(i, j) =
        \begin{cases}
          \prod_{i < k \le j} C_k & \text{si $i \le j$,} \\
          \emptyset & \text{sinon.}
        \end{cases}
  \]
  On vérifie que le foncteur $W_\C : \wrDelta{\C} \to \CCat{\C}$ est pleinement
  fidèle (voir~\cite[proposition~3.5]{BergerWreath}).
\end{paragraph}

\begin{paragraph}\label{paragr:constr_W}
  Dans le cas où $\C = \ooCat$, la construction du paragraphe précédent
  fournit un foncteur $W : \wrDelta{\ooCat} \to \ooCat$, la catégorie
  $\CCat{(\ooCat)}$ s'identifiant à $\ooCat$.

  Si $n \ge 0$ est un entier et $C_1, \dots, C_n$ sont des \oo-catégories, on notera
  parfois $\Deltan{n} \wr (C_1, \dots, C_n)$ la \oo-catégorie
  $W((\Deltan{n}; C_1, \dots, C_n))$. Dans le cas où tous les $C_i$ sont
  égaux à une même \oo-catégorie $C$, on notera plus simplement~$\Deltan{n}
  \wr C$ cette même \oo-catégorie.

  On vérifie facilement que la \oo-catégorie $\Deltan{n} \wr (C_1, \dots, C_n)$
  est la limite inductive du diagramme
  \[
    \xymatrix@C=1pc{
      \Sigma'C_n & & \Sigma'C_{n-1} & & \cdots & & \Sigma'C_1 \\
                   & \Dn{0} \ar[lu]^{0} \ar[ru]_{1} & & \Dn{0} \ar[lu]^{0}
                   & \cdots & \Dn{0} \ar[ur]_1 & \bpbox{,}\\
    }
  \]
  où, si $D$ est une \oo-catégorie, $\Sigma'D$ désigne la \oo-catégorie
  dont les objets sont $0$ et~$1$, et telle que
  \[
    \Homi_{\Sigma'D}(\e, \e')
    =
    \begin{cases}
      D & \text{si $\e = 0$ et $\e' = 1$,} \\
      \Dn{0} & \text{si $\e = \e'$,} \\
      \emptyset & \text{si $\e = 1$ et $\e' = 0$},
    \end{cases}
  \]
  la composition étant définie de la manière évidente.

  Il est par ailleurs immédiat que le foncteur $W$ se restreint pour tout $n
  \ge 0$ en un foncteur $W_n : \wrDelta{\nCat{n}} \to \nCat{(n+1)}$, qui
  n'est autre que le foncteur $W_{\nCat{n}}$ du paragraphe précédent.
\end{paragraph}

\begin{paragraph}\label{paragr:def_R_n}
  On définit un foncteur $R_n : \Theta_n \to \nCat{n}$ par récurrence sur $n
  \ge 0$:
  \begin{itemize}
    \item le foncteur $R_0 : \Theta_0 = \ponct \to \nCat{0} = \Ens$
      correspond au singleton ;
    \item le foncteur $R_{n+1}$ est le composé
      \[
        \Theta_{n+1} = \wrDelta{\Theta_n} \xto{\wrDelta{R_n}}
        \wrDelta{\nCat{n}} \xto{W_n} \nCat{(n+1)} \pbox{.}
      \]
  \end{itemize}
  On obtient un foncteur $R : \Theta \to \ooCat$ en passant à la limite
  inductive.

  Par exemple, si on considère l'objet $S = (\Deltan{3};\Deltan{3},
  \Deltan{0}, \Deltan{2})$ de $\Theta_2$, on a
  {
    \shorthandoff{;}
  \[ R_2(S) =
     \xymatrix{
       \bullet
       \ar@/^5ex/[r]^{}="a"
       \ar@/^1.7ex/[r]^{}="b"
       \ar@/_1.7ex/[r]^{}="c"
       \ar@/_5ex/[r]_{}="d"
       \ar@2"a";"b"
       \ar@2"b";"c"
       \ar@2"c";"d"
       &
       \bullet
       \ar[r]^{}="e"
       &
       \bullet
       \ar@/^4ex/[r]^{}="f"
       \ar[r]^{}="g"
       \ar@/_4ex/[r]^{}="h"
       \ar@2"f";"g"
       \ar@2"g";"h"
       &
       \bullet
       \pbox{.}
     }
  \]
  }

  Il résulte de la pleine fidélité du foncteur $W_n$ que le foncteur $R :
  \Theta \to \ooCat$ est également pleinement fidèle. Ainsi, on considérera
  souvent $\Theta$ comme une sous-catégorie pleine de~$\ooCat$.
\end{paragraph}

\begin{paragraph}
  Le \ndef{nerf cellulaire} 
  \[ \NC : \ooCat \to \pref{\Theta} \]
  est le foncteur
  nerf associé au foncteur $R : \Theta \hookto \ooCat$ du paragraphe
  précédent. Explicitement, si $C$ est une \oo-catégorie et $T$ est un objet
  de~$\Theta$, on a
  \[ \NC(C)_T = \Hom_{\ooCat}(R(T), C). \]
  De même, pour tout $n \ge 0$, le foncteur $R_n : \Theta_n \hookto
  \nCat{n}$ induit un foncteur
  \ndef{nerf $n$-cellulaire} 
  \[ \NCn{n} : \nCat{n} \to \pref{\Theta_n}. \]
  Notons que ce foncteur n'est autre que le composé
  \[
    \nCat{n} \hookto \ooCat \xto{\NC} \pref{\Theta} \xto{i_n^\ast}
    \pref{\Theta_n} \pbox{,}
  \]
  où $i_n^\ast$ désigne le foncteur de restriction le long de l'inclusion $i_n :
  \Theta_n \hookto \Theta$.
\end{paragraph}

\subsection{Le nerf $n$-simplicial}

\begin{paragraph}\label{paragr:nerf_multi}
  Si $C$ est une catégorie, on définit un foncteur $\mu : \cDelta \times C
  \to \wrDelta{C}$, sur les objets et les morphismes, par
  \[
    \begin{split}
      (\Deltan{n}, c) & \mapsto (\Deltan{n}; c, \dots, c) \\
      (\phi, f) & \mapsto (\phi; (f)_{i',i}) \bpbox{.}
    \end{split}
  \]

  On en déduit par récurrence sur $n \ge 0$ un foncteur $m_n : \cDelta^n \to
  \Theta_n$:
  \begin{itemize}
    \item $m_0$ est l'identité de $\cDelta^0 = \ponct = \Theta_0$;
    \item $m_{n+1}$ est le composé
      \[
        \cDelta^{n+1} = \cDelta \times \cDelta^{n} \xto{\cDelta \times m_n} \cDelta \times
        \Theta_n \xto{\mu} \wrDelta{\Theta_n} = \Theta_{n+1} \pbox{.}
      \]
  \end{itemize}

  Pour $n \ge 0$, on obtient ainsi un foncteur $M_n : \cDelta^n \to
  \nCat{n}$ en composant
  \[ \cDelta^n \xto{m_n} \Theta_n \hookto \nCat{n} \pbox{.} \]
  Explicitement, avec les notations du paragraphe~\ref{paragr:constr_W},
  on a
  \[ M_n(\Deltan{i_1}, \dots, \Deltan{i_n})
    = \Deltan{i_1} \wr \cdots \wr \Deltan{i_n},
   \]
   où l'opération $\wr$ est implicitement parenthésée à droite.

  Par exemple, on a
  {
    \shorthandoff{;}
  \[
    m_2(\Deltan{2}, \Deltan{3}) = (\Deltan{2}; \Deltan{3}, \Deltan{3})
    \quadet
    M_2(\Deltan{2}, \Deltan{3}) =
     \xymatrix{
       \bullet
       \ar@/^5ex/[r]^{}="a"
       \ar@/^1.7ex/[r]^{}="b"
       \ar@/_1.7ex/[r]^{}="c"
       \ar@/_5ex/[r]^{}="d"
       \ar@2"a";"b"
       \ar@2"b";"c"
       \ar@2"c";"d"
       &
       \bullet
       \ar@/^5ex/[r]^{}="a"
       \ar@/^1.7ex/[r]^{}="b"
       \ar@/_1.7ex/[r]^{}="c"
       \ar@/_5ex/[r]^{}="d"
       \ar@2"a";"b"
       \ar@2"b";"c"
       \ar@2"c";"d"
       &
       \bullet
       \pbox{.}
     }
  \]
  }
\end{paragraph}

\begin{paragraph}
  Soit $n \ge 0$. Le \ndef{nerf $n$-simplicial}
  \[ \Nsn{n} : \nCat{n} \to \pref{\cDelta^n} \]
  est le foncteur nerf associé au foncteur $M_n :
  \cDelta^n \to \nCat{n}$ du paragraphe précédent. Explicitement, si $C$ est
  une $n$-catégorie et $i_1, \dots, i_n$ sont des entiers, on a
  \[
    \begin{split}
      \Nsn{n}(C)_{i_1, \dots, i_n}
      & = \Hom_{\nCat{n}}(M_n(\Deltan{i_1}, \dots, \Deltan{i_n}), C) \\
      & = \Hom_{\nCat{n}}(\Deltan{i_1} \wr \cdots \wr \Deltan{i_n}, C).
    \end{split}
  \]
\end{paragraph}

\section{Un quotient du produit de Gray}

\begin{paragraph}
  Si $A$ et $B$ sont deux \oo-catégories, on notera $A \otimes B$ leur
  produit de Gray. On renvoie le lecteur à l'appendice A de
  \cite{AraMaltsiJoint} pour une étude systématique du produit de Gray,
  utilisant une définition basée sur une idée de Steiner \cite{Steiner}.

  On rappelle que le produit de Gray définit sur $\ooCat$ une structure de
  catégorie monoïdale (non symétrique) bifermée, d'unité la \oo-catégorie
  finale $\Dn{0}$. Le fait que le produit de Gray soit bifermé signifie
  qu'on dispose de foncteurs
  \[ \HomOpLax, \HomLax : \ooCat^\o \times \ooCat \to \ooCat \]
  et de bijections
  \[ \Hom_{\ooCat}(A, \HomOpLax(B, C)) \simeq \Hom_{\ooCat}(A \otimes B, C)
      \simeq \Hom_{\ooCat}(B, \HomLax(A, C)),
  \]
  naturelles en $A$, $B$ et $C$ dans $\ooCat$. En particulier, le produit
  de Gray commute aux limites inductives en chacun de ses arguments.
\end{paragraph}

\begin{paragraph}
  Soient $A$ et $B$ deux \oo-catégories. Par adjonction, les $0$-cellules de
  la \oo-catégorie $\HomOpLax(A, B)$ correspondent aux \oo-foncteurs de $A$
  vers $B$. Si $u, v : A \to B$ sont deux \oo-foncteurs, une
  \ndef{transformation oplax} de $u$ vers $v$ est une $1$-cellule de
  $\HomOpLax(A, B)$ de source $u$ et de but $v$. Par adjonction, une telle
  transformation correspond à un \oo-foncteur $h : \Dn{1} \otimes A \to B$
  rendant commutatif le diagramme
  \[
    \xymatrix@C=3pc{
    A \ar[dr]^u \ar[d]_{0 \otimes A} \\
    \Dn{1} \otimes A \ar[r]^h & D \\
    A \ar[ur]_v \ar[u]^{1 \otimes A} & \pbox{,} \\
    }
  \]
  où $\Dn{1} = 0 \to 1$ et où on a identifié $A$ avec $\Dn{0} \otimes A$. Si $x$
  est une $n$-cellule de $A$, on peut lui associer par une telle transformation
  une $(n+1)$\=/cellule~$h_x$ en composant
  \[
    \Dn{n+1} \to \Dn{1} \otimes \Dn{n} \xto{\Dn{1} \otimes x} \Dn{1} \otimes A
    \xto{h} B \pbox{,}
  \]
  où la première flèche correspond à l'unique $(n+1)$-cellule non triviale
  (c'est-à-dire qui n'est pas une identité)
  de $\Dn{1} \otimes \Dn{n}$. On montre que les $h_x$, où $x$ varie parmi
  les cellules de $A$, déterminent complètement~$h$. (On renvoie, par
  exemple, à l'appendice~B de \cite{AraMaltsiJoint} pour une étude
  systématique des transformations oplax.)

  On définit de même la notion de \ndef{transformation lax} en remplaçant
  l'usage de $\HomOpLax(A, B)$ par celui de $\HomLax(A, B)$. Ainsi, une
  transformation lax entre \oo-foncteurs de $A$ vers $B$ est donnée par un
  \oo-foncteur $A \otimes \Dn{1} \to B$.
\end{paragraph}

\begin{paragraph}\label{paragr:def_otimest}
  Soient $A$ et $B$ deux \oo-catégories. On définit une \oo-catégorie $A
  \otimest B$ par le carré cocartésien
  \[
    \xymatrix@C=3.5pc@R=2.5pc{
      A \otimes \Ob(B) \ar[r]^-{p \otimes \Ob(B)} \ar[d]_{A \otimes i} &
      \pi_0(A) \otimes \Ob(B) \ar[d] \\ A \otimes B \ar[r] & \pushoutcorner A
      \otimest B \bpbox{,}
    }
  \]
  où $\Ob(B)$ désigne l'ensemble des objets de $B$ vu comme une \oo-catégorie
  discrète, $i : \Ob(B) \hookto B$ l'inclusion canonique, $\pi_0(A)$
  l'ensemble des composantes connexes de $A$ vu comme une \oo-catégorie
  discrète et $p : A \to \pi_0(A)$ la projection canonique. (Notons que le
  produit de Gray de deux ensembles n'est autre que leur produit
  cartésien.) La \oo-catégorie $A \otimest B$ est ainsi munie d'un
  \oo-foncteur canonique $A \otimes B \to A \otimest B$, qui est un
  épimorphisme car image directe d'un épimorphisme.

  Cette construction est clairement fonctorielle en $A$ et $B$, et on
  obtient donc un foncteur
  \[ \otimest : \ooCat \times \ooCat \to \ooCat, \]
  ainsi qu'une transformation naturelle canonique $A \otimes B \to A
  \otimest B$. On se gardera de croire que $\otimest$ définit une produit
  monoïdal sur $\ooCat$. (On peut par exemple montrer que $(\Dn{1} \otimest
  \Dn{1}) \otimest \Dn{1}$ n'est pas isomorphe à $\Dn{1} \otimest (\Dn{1}
  \otimest \Dn{1})$.)

  On vérifie immédiatement qu'on a des isomorphismes canoniques
  \[ \Dn{0} \otimest A \simeq A \quadet A \otimest \Dn{0} \simeq \pi_0(A),
  \]
  naturels en $A$ dans $\ooCat$.
\end{paragraph}

\begin{proposition}
  Soient $A$ et $B$ deux \oo-catégories. On a des bijections canoniques
  \[ \Ob(A \otimest B) \simeq \pi_0(A) \times \Ob(B)
  \quadet
     \pi_0(A \otimest B) \simeq \pi_0(A) \times \pi_0(B), \]
  naturelles en $A$ et $B$.
\end{proposition}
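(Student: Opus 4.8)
The plan is to compute both invariants directly from the defining pushout square of $A \otimest B$, using that the relevant functors ($\Ob$ and $\pi_0$) preserve colimits. Recall from paragraph~\ref{paragr:def_otimest} that $A \otimest B$ is defined by the cocartesian square
\[
  \xymatrix@C=3.5pc@R=2.5pc{
    A \otimes \Ob(B) \ar[r]^-{p \otimes \Ob(B)} \ar[d]_{A \otimes i} &
    \pi_0(A) \otimes \Ob(B) \ar[d] \\ A \otimes B \ar[r] & \pushoutcorner A
    \otimest B \bpbox{.}
  }
\]
For the first isomorphism, I would apply the functor $\Ob : \ooCat \to \Ens$. As noted in the Notations, $\Ob$ admits a left adjoint (the inclusion $\Ens \hookto \ooCat$), hence $\Ob$ preserves all limits; but here we need that it preserves this particular colimit. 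In fact $\Ob$ also has a \emph{right} adjoint, so it preserves colimits as well; thus $\Ob(A \otimest B)$ is the pushout in $\Ens$ of $\Ob(A \otimes B)$ and $\Ob(\pi_0(A) \otimes \Ob(B))$ over $\Ob(A \otimes \Ob(B))$. Now I would use the standard fact that $\Ob(X \otimes Y) \simeq \Ob(X) \times \Ob(Y)$ (the Gray product of the discrete categories on the object sets, which is just the cartesian product of those sets, sits inside $X \otimes Y$ as its $0$-cells). Writing $S = \Ob(A)$, $S' = \Ob(B)$, and $P = \pi_0(A)$, the square of object sets becomes
\[
  \xymatrix@C=3pc@R=2pc{
    S \times S' \ar[r]^-{p_0 \times S'} \ar[d]_{=} & P \times S' \ar[d] \\
    S \times S' \ar[r] & \Ob(A \otimest B) \bpbox{,}
  }
\]
where the left vertical map is the identity (since $\Ob(A \otimes \Ob(B)) = S \times S' = \Ob(A\otimes B)$) and $p_0 : S \to P$ is the map induced by $p : A \to \pi_0(A)$ on objects, which is surjective. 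A pushout of a map along an identity is just the other map, so $\Ob(A \otimest B) \simeq P \times S' = \pi_0(A) \times \Ob(B)$, naturally in $A$ and $B$.

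For the second isomorphism I would apply $\pi_0 : \ooCat \to \Ens$, which is a left adjoint (to $\Ens \hookto \ooCat$) and hence preserves colimits, so $\pi_0(A \otimest B)$ is the pushout in $\Ens$ of the image square. Here I would invoke the computation $\pi_0(X \otimes Y) \simeq \pi_0(X) \times \pi_0(Y)$; this follows because the Gray product commutes with colimits in each variable and the geometric realisation / $\pi_0$ of $X \otimes Y$ is known to split, or more elementarily because $\pi_0$ of a Gray product can be computed from the underlying $1$-truncation, where the Gray product reduces to the cartesian product up to connected components. Applying this, and noting $\pi_0$ of a discrete category is the set itself, the square becomes
\[
  \xymatrix@C=3pc@R=2pc{
    \pi_0(A) \times S' \ar[r]^-{=} \ar[d]_{\pi_0(A) \times q} & \pi_0(A) \times S' \ar[d] \\
    \pi_0(A) \times \pi_0(B) \ar[r] & \pi_0(A \otimest B) \bpbox{,}
  }
\]
where the top map is the identity (the map $p \otimes \Ob(B)$ becomes, after $\pi_0$, $\pi_0(p) \times \mathrm{id} = \mathrm{id} \times \mathrm{id}$ since $\pi_0(p)$ is a bijection) and $q : S' \to \pi_0(B)$ is the canonical surjection. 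Again a pushout along an identity gives the opposite leg, so $\pi_0(A \otimest B) \simeq \pi_0(A) \times \pi_0(B)$, naturally.

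The main obstacle is not the pushout bookkeeping, which is routine once the functors are known to preserve the colimit, but rather pinning down the two auxiliary facts $\Ob(X \otimes Y) \simeq \Ob(X)\times\Ob(Y)$ and $\pi_0(X\otimes Y)\simeq\pi_0(X)\times\pi_0(Y)$ with enough care, together with the identification of the maps in the square under these functors (in particular that $\pi_0(p) : \pi_0(A) \to \pi_0(\pi_0(A))$ is the canonical bijection). The first is immediate from the construction of the Gray product recalled in~\cite{AraMaltsiJoint}; the second requires either a direct inspection of the $1$-truncation of $X \otimes Y$ or an appeal to the monoidality of $\pi_0$ with respect to $\otimes$ and $\times$, which I would cite or verify from the explicit description in \cite[Appendix~A]{AraMaltsiJoint}. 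Once these are in hand the proof is a two-line diagram chase in $\Ens$, and naturality in $A$ and $B$ is automatic since every map in sight is induced functorially.
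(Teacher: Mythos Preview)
Your proof is correct and follows essentially the same approach as the paper: apply $\Ob$ and $\pi_0$ to the defining pushout square, use that both functors preserve colimits, invoke the product formulas $\Ob(X\otimes Y)\simeq\Ob(X)\times\Ob(Y)$ and $\pi_0(X\otimes Y)\simeq\pi_0(X)\times\pi_0(Y)$, and observe that one leg of the resulting square in $\Ens$ is an isomorphism so that the pushout is computed by the opposite leg. The paper phrases the last step as ``one arrow is an iso, hence so is the parallel one'' rather than ``pushout along an identity'', but this is the same observation; it also gives a precise reference for the $\pi_0$ product formula, namely \cite[Proposition~A.27]{AraMaltsiJoint} (case $n=0$), which you should cite rather than sketch.
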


\begin{proof}
  Considérons le carré cocartésien
  \[
    \xymatrix{
      A \otimes \Ob(B) \ar[r]
         \ar[d]
         &
      \pi_0(A) \otimes \Ob(B) \ar[d] \\ A \otimes B \ar[r] & \pushoutcorner A
      \otimest B \bpbox{.}
    }
  \]
  En appliquant à ce carré le foncteur $\Ob : \ooCat \to \Ens$, qui commute aux limites
  inductives, on obtient un nouveau carré cocartésien. Il résulte de
  l'isomorphisme naturel $\Ob(C \otimes D) \simeq \Ob(C) \times \Ob(D)$ que
  la flèche verticale de gauche de ce second carré est un isomorphisme. On
  en déduit
  qu'il en est de même de celle de droite et on a donc
  \[
    \Ob(A \otimest B) \simeq \Ob(\pi_0(A) \otimes \Ob(B))
    \simeq \pi_0(A) \times \Ob(B).
  \]
  Appliquons maintenant au carré cocartésien considéré au début de cette
  preuve le foncteur $\pi_0 : \ooCat \to \Ens$, qui commute également aux
  limites inductives. Il résulte de l'isomorphisme naturel $\pi_0(C \otimes
  D) \simeq \pi_0(C) \times \pi_0(D)$ (voir par exemple le cas $n = 0$
  de \cite[Proposition A.27]{AraMaltsiJoint}) que la flèche horizontale du
  haut du carré ainsi obtenu est un isomorphisme. Il en est donc de même de
  la flèche du bas et on a donc
  \[
    \pi_0(A \otimest B) \simeq \pi_0(A \otimes B) \simeq \pi_0(A) \times
    \pi_0(B),
  \]
  ce qu'il fallait démontrer.
\end{proof}

\begin{proposition}\label{prop:iso_otimest}
  Soient $A$, $B$ et $C$ trois \oo-catégories. On a un isomorphisme
  canonique
  \[ A \otimest (B \otimest C) \simeq (A \otimes B) \otimest C, \]
  naturel en $A$, $B$ et $C$.
\end{proposition}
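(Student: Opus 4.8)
Le plan est de présenter les deux membres comme la limite inductive d'un même
diagramme construit à partir du produit de Gray. Rappelons que, pour deux
\oo-catégories $X$ et $Y$, la \oo-catégorie $X \otimest Y$ est la limite inductive
du diagramme
\[
  \pi_0(X) \otimes \Ob(Y) \xot{p \otimes \Ob(Y)} X \otimes \Ob(Y) \xto{X \otimes i} X \otimes Y,
\]
où $p : X \to \pi_0(X)$ et $i : \Ob(Y) \hookto Y$ sont les flèches canoniques, et
que $\otimes$ commute aux limites inductives en chacun de ses arguments.

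Pour le membre de droite, on substitue $A \otimes B$ à $X$ et $C$ à $Y$ : la
\oo-catégorie $(A \otimes B) \otimest C$ est la limite inductive du diagramme
$\pi_0(A \otimes B) \otimes \Ob(C) \ot (A \otimes B) \otimes \Ob(C) \to (A \otimes
B) \otimes C$. En utilisant l'associativité du produit de Gray et l'isomorphisme
naturel $\pi_0(A \otimes B) \simeq \pi_0(A) \times \pi_0(B)$ (rappelons que le
produit de Gray de deux \oo-catégories discrètes n'est autre que leur produit
cartésien, de sorte que $\pi_0(A) \times \pi_0(B) \simeq \pi_0(A) \otimes
\pi_0(B)$), ce diagramme se réécrit
\[
  \pi_0(A) \otimes \pi_0(B) \otimes \Ob(C) \xot{} A \otimes B \otimes \Ob(C)
  \xto{} A \otimes B \otimes C,
\]
où l'on écrit $A \otimes B \otimes C$, etc., pour le produit de Gray itéré (bien
défini à isomorphisme canonique près par associativité), où la flèche de droite est
induite par l'inclusion $\Ob(C) \hookto C$ et où celle de gauche est
$p_A \otimes p_B \otimes \Ob(C)$. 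On utilise ici que la projection canonique
$A \otimes B \to \pi_0(A \otimes B)$ s'identifie, modulo l'isomorphisme ci-dessus,
à $p_A \otimes p_B$ : cela résulte de la naturalité de la projection $p$ et du fait
que $p$ est un isomorphisme sur les \oo-catégories discrètes.

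Pour le membre de gauche, on écrira d'abord $B \otimest C$ comme la limite
inductive de $\pi_0(B) \otimes \Ob(C) \ot B \otimes \Ob(C) \to B \otimes C$, puis
on appliquera le foncteur $A \otimes {-}$, qui commute aux limites inductives, pour
présenter $A \otimes (B \otimest C)$ comme la limite inductive d'un diagramme
analogue. La \oo-catégorie $A \otimest (B \otimest C)$ s'obtient alors par un carré
cocartésien supplémentaire, le long du \oo-foncteur canonique $A \otimes \Ob(B
\otimest C) \to A \otimes (B \otimest C)$. Or, d'après la proposition précédente
(et plus précisément sa démonstration), on a $\Ob(B \otimest C) \simeq \pi_0(B)
\times \Ob(C)$, et ce \oo-foncteur s'identifie, via cet isomorphisme, à la flèche
$A \otimes \pi_0(B) \otimes \Ob(C) \to A \otimes (B \otimest C)$ du carré
précédent. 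En composant ces deux carrés cocartésiens (le carré extérieur est alors
encore cocartésien) et en utilisant la fonctorialité du produit de Gray, on
présente $A \otimest (B \otimest C)$ comme la limite inductive du même diagramme
\[
  \pi_0(A) \otimes \pi_0(B) \otimes \Ob(C) \xot{} A \otimes B \otimes \Ob(C)
  \xto{} A \otimes B \otimes C
\]
que le membre de droite, d'où l'isomorphisme cherché. Tous les isomorphismes
utilisés (associativité du produit de Gray, calcul de $\pi_0$ et de $\Ob$ d'un
produit de Gray et de $\otimest$, commutation de $\otimes$ aux limites inductives)
étant naturels en $A$, $B$ et $C$, l'isomorphisme obtenu l'est aussi.

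Le principal obstacle sera la vérification soigneuse que les deux diagrammes
présentant les deux membres ont, outre les mêmes sommets, les mêmes flèches. Le
seul point non formel en jeu est l'identité $p_{A \otimes B} = p_A \otimes p_B$
(modulo l'isomorphisme $\pi_0(A \otimes B) \simeq \pi_0(A) \otimes \pi_0(B)$), qui
se ramène, comme on l'a dit, à la naturalité de la projection $p$ et à son
inversibilité sur les \oo-catégories discrètes ; tout le reste relève de
l'associativité et de la fonctorialité du produit de Gray.
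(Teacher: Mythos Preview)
Ta preuve est correcte et suit essentiellement la même démarche que celle de l'article : les deux calculent $A \otimest (B \otimest C)$ comme une somme amalgamée itérée, puis la simplifient par le lemme de recollement des carrés cocartésiens en la somme amalgamée définissant $(A \otimes B) \otimest C$, les identifications clés étant $\Ob(B \otimest C) \simeq \pi_0(B) \otimes \Ob(C)$ et $\pi_0(A \otimes B) \simeq \pi_0(A) \otimes \pi_0(B)$. L'article présente ce calcul comme une chaîne d'isomorphismes de sommes amalgamées, tandis que tu l'exprimes comme recollement de deux carrés cocartésiens, mais c'est le même argument ; tu explicites en outre le point $p_{A \otimes B} \simeq p_A \otimes p_B$, que l'article laisse implicite.
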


\begin{proof}
  En utilisant le fait que le foncteur $A \otimes {-}$ commute aux limites
  inductives et la proposition précédente, on obtient
  \[
    \begin{split}
      A \otimest (B \otimest C)
      & =
      \big(A \otimes (B \otimest C)\big) \amalg_{A \otimes \Ob(B \otimest C)}
      \big(\pi_0(A) \otimes \Ob(B \otimest C)\big) \\
      & \simeq
      \big((A \otimes B \otimes C) \amalg_{A \otimes B \otimes \Ob(C)} (A \otimes \pi_0(B)
      \otimes \Ob(C))\big) \\
      & \phantom{\simeq1} \qquad\qquad \amalg_{A \otimes \pi_0(B) \otimes \Ob(C)}
      \big(\pi_0(A) \otimes \pi_0(B) \otimes \Ob(C)\big) \\
      & \simeq
      (A \otimes B \otimes C) \amalg_{A \otimes B \otimes \Ob(C)}
      (\pi_0(A) \otimes \pi_0(B) \otimes \Ob(C)) \\
      & \simeq
      \big((A \otimes B) \otimes C\big) \amalg_{(A \otimes B) \otimes \Ob(C)}
      (\pi_0(A \otimes B) \otimes \Ob(C)) \\
      & =
      (A \otimes B) \otimest C,
    \end{split}
  \]
  d'où le résultat.
\end{proof}

\begin{remark}
  Ainsi, on dispose d'un \oo-foncteur canonique
  \[ A \otimest (B \otimest C) \simeq (A \otimes B) \otimest C \to (A
  \otimest B) \otimest C \pbox{,} \]
  et on peut montrer que le produit $\otimest$ munit $\ooCat$ d'une
  structure de catégorie monoïdale lax en un sens adéquat.
\end{remark}

\begin{proposition}
  Le foncteur $\otimest : \ooCat \times \ooCat \to \ooCat$ commute aux
  limites inductives en chacune de ses variables.
\end{proposition}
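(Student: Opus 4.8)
Le plan est d'exploiter le carré cocartésien définissant $A \otimest B$, en constatant que chacun de ses trois autres sommets est, vu comme foncteur de la variable restée libre, un composé de foncteurs commutant aux limites inductives. On commencera par fixer $B$ et remarquer que les trois foncteurs $A \mapsto A \otimes B$, $A \mapsto A \otimes \Ob(B)$ et $A \mapsto \pi_0(A) \otimes \Ob(B)$ de $\ooCat$ vers $\ooCat$ commutent aux limites inductives : les deux premiers parce que le produit de Gray commute aux limites inductives en chacun de ses arguments, et le troisième parce qu'il est le composé de $\pi_0 : \ooCat \to \Ens$, qui commute aux limites inductives en tant qu'adjoint à gauche du foncteur d'inclusion $\Ens \hookto \ooCat$ (voir les notations de l'introduction), avec la restriction du produit de Gray.

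On observera ensuite que le foncteur $A \mapsto A \otimest B$ est le composé du foncteur qui à $A$ associe le cornet $A \otimes B \ot A \otimes \Ob(B) \to \pi_0(A) \otimes \Ob(B)$, à valeurs dans la catégorie $\ooCat^{\Lambda}$ des diagrammes de la forme $\Lambda = (\var \ot \var \to \var)$ dans $\ooCat$, suivi du foncteur « somme amalgamée » $\ooCat^{\Lambda} \to \ooCat$. Le premier de ces deux foncteurs commute aux limites inductives d'après le paragraphe précédent, puisque les limites inductives dans $\ooCat^{\Lambda}$ se calculent argument par argument ; le second commute aux limites inductives en tant qu'adjoint à gauche du foncteur « diagramme constant » $\ooCat \to \ooCat^{\Lambda}$. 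Le composé $A \mapsto A \otimest B$ commute donc aux limites inductives.

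Pour la seconde variable, on fixera $A$ et le même argument s'appliquera mot pour mot aux trois foncteurs $B \mapsto A \otimes B$, $B \mapsto A \otimes \Ob(B)$ et $B \mapsto \pi_0(A) \otimes \Ob(B)$, en utilisant cette fois que le foncteur $\Ob : \ooCat \to \Ens$ commute aux limites inductives, fait dont on s'est déjà servi dans la preuve de la première proposition de cette section.

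Il n'y a ici aucune difficulté réelle : l'énoncé est une conséquence formelle du fait que le produit de Gray, le foncteur $\pi_0$ et le foncteur $\Ob$ commutent aux limites inductives, combiné à l'interversion des limites inductives. Le seul point réclamant un soupçon de soin est la formulation précise de cette interversion — à savoir que la limite inductive d'un diagramme de carrés cocartésiens est le carré cocartésien des limites inductives —, ce qui revient exactement à dire que les limites inductives dans $\ooCat^{\Lambda}$ se calculent argument par argument et que le foncteur « somme amalgamée » est cocontinu.
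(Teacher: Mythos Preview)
Ta preuve est correcte et suit essentiellement la même démarche que celle de l'article : tous deux invoquent la cocontinuité de $\otimes$, de $\pi_0$ et de $\Ob$, puis la commutation des limites inductives aux sommes amalgamées. L'article se contente d'une phrase lapidaire là où tu explicites la factorisation par la catégorie de diagrammes $\ooCat^{\Lambda}$, ce qui rend le même argument plus transparent.
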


\begin{proof}
  Cela résulte du fait que le foncteur $\otimes$ a cette propriété, que les
  foncteurs $A \mapsto \pi_0(A)$ et $B \mapsto \Ob(B)$ commutent tous les
  deux aux limites inductives et que, enfin, les limites inductives
  commutent aux sommes amalgamées.
\end{proof}
 
\begin{proposition}
  Il existe des foncteurs
  \[ \Homidt, \Homigt : \ooCat^\o \times \ooCat \to \ooCat \]
  et des bijections
  \[
    \Hom_{\ooCat}(A, \Homidt(B, C))
    \simeq
    \Hom_{\ooCat}(A \otimest B, C)
    \simeq
    \Hom_{\ooCat}(B, \Homigt(A, C))
  \]
  naturelles en $A$, $B$ et $C$ dans $\ooCat$.
\end{proposition}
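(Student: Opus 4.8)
Le plan est d'obtenir les deux $\Hom$ internes de façon purement formelle, à partir du fait que $\otimest$ commute aux limites inductives en chacune de ses variables (proposition précédente) et de la présentabilité locale de $\ooCat$.

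On rappelle d'abord que $\ooCat$ est une catégorie localement présentable (par exemple en tant que catégorie des algèbres d'une monade finitaire sur les ensembles globulaires), de sorte que tout foncteur de $\ooCat$ dans $\ooCat$ commutant aux petites limites inductives admet un adjoint à droite. D'après la proposition précédente, pour toute \oo-catégorie $B$ fixée, le foncteur ${-} \otimest B : \ooCat \to \ooCat$ commute aux petites limites inductives ; il admet donc un adjoint à droite, que l'on note $\Homidt(B, {-})$, et l'on dispose d'une bijection
\[
  \Hom_{\ooCat}(A \otimest B, C) \simeq \Hom_{\ooCat}(A, \Homidt(B, C))
\]
naturelle en $A$ et en $C$. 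De même, pour toute \oo-catégorie $A$ fixée, le foncteur $A \otimest {-}$ commute aux petites limites inductives, donc admet un adjoint à droite $\Homigt(A, {-})$, et l'on dispose d'une bijection $\Hom_{\ooCat}(A \otimest B, C) \simeq \Hom_{\ooCat}(B, \Homigt(A, C))$ naturelle en $B$ et en $C$.

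Il reste à promouvoir $B \mapsto \Homidt(B, {-})$ et $A \mapsto \Homigt(A, {-})$ en des bifoncteurs $\ooCat^\o \times \ooCat \to \ooCat$. J'invoquerais pour cela le théorème des adjonctions avec paramètre : si $F$ est un bifoncteur tel que $F(X, {-})$ admette un adjoint à droite $G(X, {-})$ pour tout objet $X$, il existe une unique façon de munir $X \mapsto G(X, {-})$ d'une structure de foncteur rendant les bijections d'adjonction également naturelles en $X$ (pour un morphisme $X \to X'$, le morphisme $G(X', {-}) \to G(X, {-})$ est la transformation conjuguée de $F(X, {-}) \to F(X', {-})$). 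En l'appliquant à $F = \otimest$ avec $X = B$, on obtient $\Homidt : \ooCat^\o \times \ooCat \to \ooCat$ ; en l'appliquant au foncteur « retourné » $(B, A) \mapsto A \otimest B$ avec $X = A$, on obtient $\Homigt$. La composition des deux familles de bijections fournit alors
\[
  \Hom_{\ooCat}(A, \Homidt(B, C)) \simeq \Hom_{\ooCat}(A \otimest B, C) \simeq \Hom_{\ooCat}(B, \Homigt(A, C)),
\]
simultanément naturelle en $A$, $B$ et $C$, comme annoncé.

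La seule véritable difficulté est formelle : il faut vérifier, dans l'étape d'adjonction avec paramètre, que les transformations conjuguées se composent correctement et que les bijections obtenues sont simultanément naturelles en les trois variables, ce qui est routinier mais doit être rédigé avec soin. Si l'on préfère éviter tout recours à la présentabilité locale, on peut aussi construire les deux foncteurs à la main : en appliquant $\Hom_{\ooCat}({-}, C)$ au carré cocartésien définissant $A \otimest B$, on obtient un carré cartésien dont on réécrit les sommets grâce aux adjonctions du produit de Gray rappelées plus haut et aux adjonctions $\pi_0 \dashv ({\Ens} \hookto \ooCat) \dashv \Ob$ ; on exhibe ainsi $\Hom_{\ooCat}(A \otimest B, C)$ sous la forme $\Hom_{\ooCat}(A, P_{B,C})$, où
\[
  P_{B,C} = \HomOpLax(B, C) \times_{\HomOpLax(\Ob(B), C)} \HomOpLax(\Ob(B), \Ob(C)),
\]
le produit fibré étant pris le long des morphismes induits par les inclusions canoniques $\Ob(B) \hookto B$ et $\Ob(C) \hookto C$ ; on pose alors $\Homidt(B, C) = P_{B,C}$ et l'on traite $\Homigt$ de façon analogue. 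Dans cette seconde approche, le point délicat est l'identification des morphismes de transition, notamment celui provenant de $p \otimes \Ob(B) : A \otimes \Ob(B) \to \pi_0(A) \otimes \Ob(B)$, où $p : A \to \pi_0(A)$ dépend de $A$ : après les identifications, ce morphisme ne fait plus intervenir $A$ et se réduit à l'inclusion $\Ob(C) \hookto C$.
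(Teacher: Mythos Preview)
Your primary argument is correct and is exactly the paper's approach: the paper's proof is the single sentence that $\ooCat$ being locally presentable, the result follows formally from the fact that $\otimest$ commutes with colimits in each variable. You have simply unpacked this (adjoint functor theorem, parametrised adjunction) and added an optional explicit description of $\Homidt$ as a pullback, which the paper does not give but which is consistent with its later description in paragraph~\ref{paragr:desc_Homit}.
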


\begin{proof}
  La catégorie $\ooCat$ étant localement présentable, cela résulte
  formellement du fait que le foncteur $\otimest$ commute aux limites
  inductives en chacune de ses variables.
\end{proof}

\begin{paragraph}\label{paragr:desc_Homit}
  Par adjonction, le \oo-foncteur canonique $C \otimes D \to C \otimest D$
  induit des \oo-foncteurs
  \[
     \Homidt(A, B) \to \HomOpLax(A, B)
     \quadet
     \Homigt(A, B) \to \HomLax(A, B).
  \]
  Il résulte du fait que $A \otimes B \to A \otimest B$ est un épimorphisme
  que ces \oo-foncteurs sont des monomorphismes, et on considérera souvent
  ces \oo-foncteurs comme des inclusions.

  Explicitement, cela revient à identifier une $i$-cellule de $\Homidt(A, B)$
  avec un \oo-foncteur $\Dn{i} \otimes A \to B$ qui se factorise par
  l'épimorphisme $\Dn{i} \otimes A \to \Dn{i} \otimest A$, et une
  $i$-cellule de $\Homigt(A, B)$
  avec un \oo-foncteur de $A \otimes \Dn{i} \to B$ qui se factorise par
  l'épimorphisme $A \otimes \Dn{i} \to A \otimest \Dn{i}$.

  Puisque le \oo-foncteur canonique $\Dn{0} \otimes A \to \Dn{0} \otimest A$
  est un isomorphisme, on a
  \[
    \Ob(\Homidt(A, B)) = \Ob(\HomOpLax(A, B)) = \Hom_{\ooCat}(A, B).
  \]
  Si $i \ge 1$, en revenant à la définition de $\Dn{i} \otimest A$, on
  obtient qu'une $i$-cellule $h : \Dn{i} \otimes A \to B$ de $\HomOpLax(A,
  B)$ appartient à $\Homidt(A, B)$ si et seulement si, pour tout objet $a$
  de $A$, le composé
  \[
    \Dn{i} \otimes \Dn{0} \xto{\Dn{i} \otimes a} \Dn{i} \otimes A \xto{h} B
  \]
  se factorise par $\pi_0(\Dn{i}) \simeq \Dn{0}$. Lorsque $i = 1$,
  cette condition signifie exactement que, pour tout objet $a$ de $A$, la
  $1$-flèche $h_a$ de $B$ est une identité. On dira qu'une transformation
  oplax $h$ qui vérifie cette condition est \ndef{triviale sur les objets}.

  De même, par définition, un objet de $\HomLax(A, B)$, \oo-foncteur de $A$
  vers $B$, est dans $\Homigt(A, B)$ s'il se factorise par
  $\pi_0(A)$, c'est-à-dire s'il est localement constant. Si $i \ge 1$, une
  $i$-cellule $k : A \otimes \Dn{i} \to B$ de $\HomLax(A, B)$ est dans
  $\Homigt(A, B)$ si, pour $\e = 0, 1$, le composé
  \[
    A \otimes \Dn{0} \xto{A \otimes \e} A \otimes \Dn{i} \xto{k} B
  \]
  se factorise par $\pi_0(A)$. Or, ce composé n'est autre que la source ou
  le but, suivant que $\e$ vaille $0$ ou $1$, de la $i$-cellule $k$ de
  $\HomLax(A, B)$. Ainsi, $\Homigt(A, B)$ est la sous-\oo-catégorie pleine
  de $\HomLax(A, B)$ dont les objets sont les \oo-foncteurs localement
  constants de $A$ vers $B$.
\end{paragraph}

\begin{remark}
  Dans le cadre $2$-catégorique, les transformations oplax triviales sur les
  objets sont parfois appelées \ndef{icons} pour « Identity Component
  Oplax Natural transformations » \cite{LackIcons}.
\end{remark}

\begin{proposition}
  Soient $A$, $B$ et $C$ des \oo-catégories. On a des isomorphismes
  canoniques
    \[ \Homidt(A \otimest B, C) \simeq \HomOpLax(A, \Homidt(B, C)) \]
  et
    \[ \Homigt(A \otimes B, C) \simeq \Homigt(B, \Homigt(A, C)), \]
  naturels en $A$, $B$ et $C$.
\end{proposition}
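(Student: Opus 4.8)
Le plan est d'établir les deux isomorphismes \forlang{via} le lemme de Yoneda, en ramenant chacun à une chaîne formée des bijections d'adjonction déjà obtenues et de l'isomorphisme d'associativité de la proposition~\ref{prop:iso_otimest}. En effet, par le lemme de Yoneda, il suffit de construire, pour toute \oo-catégorie $Z$, une bijection $\Hom_{\ooCat}(Z, X) \simeq \Hom_{\ooCat}(Z, Y)$ naturelle en $Z$, où $X$ et $Y$ désignent les deux membres de l'isomorphisme annoncé ; la naturalité en $A$, $B$ et $C$ en résultera automatiquement, chaque bijection intervenant dans les chaînes ci-dessous étant naturelle en tous les objets qui y figurent.

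Pour le premier isomorphisme, on enchaîne, pour une \oo-catégorie $Z$ quelconque, l'adjonction définissant $\Homidt$, l'isomorphisme canonique $Z \otimest (A \otimest B) \simeq (Z \otimes A) \otimest B$ fourni par la proposition~\ref{prop:iso_otimest}, de nouveau l'adjonction définissant $\Homidt$, et enfin l'adjonction du produit de Gray relative à $\HomOpLax$ :
\begin{align*}
  \Hom_{\ooCat}(Z, \Homidt(A \otimest B, C))
  &\simeq \Hom_{\ooCat}(Z \otimest (A \otimest B), C) \\
  &\simeq \Hom_{\ooCat}((Z \otimes A) \otimest B, C) \\
  &\simeq \Hom_{\ooCat}(Z \otimes A, \Homidt(B, C)) \\
  &\simeq \Hom_{\ooCat}(Z, \HomOpLax(A, \Homidt(B, C))).
\end{align*}

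Pour le second isomorphisme, on procède de la même façon, en utilisant cette fois la bijection $\Hom_{\ooCat}(A \otimest B, C) \simeq \Hom_{\ooCat}(B, \Homigt(A, C))$ et en appliquant la proposition~\ref{prop:iso_otimest} sous la forme $(A \otimes B) \otimest Z \simeq A \otimest (B \otimest Z)$ :
\begin{align*}
  \Hom_{\ooCat}(Z, \Homigt(A \otimes B, C))
  &\simeq \Hom_{\ooCat}((A \otimes B) \otimest Z, C) \\
  &\simeq \Hom_{\ooCat}(A \otimest (B \otimest Z), C) \\
  &\simeq \Hom_{\ooCat}(B \otimest Z, \Homigt(A, C)) \\
  &\simeq \Hom_{\ooCat}(Z, \Homigt(B, \Homigt(A, C))).
\end{align*}

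L'argument étant essentiellement formel, le seul point demandant un peu d'attention est le suivi des variables : vérifier que la proposition~\ref{prop:iso_otimest} est appliquée au triplet voulu et que chaque adjonction est employée dans la bonne variable --- $\Homidt(B, {-})$ adjoint à droite de ${-} \otimest B$, $\Homigt(A, {-})$ adjoint à droite de $A \otimest {-}$, et de même pour les $\Hom$ internes du produit de Gray. Le principal obstacle, s'il y en a un, est de s'assurer que les bijections composées sont naturelles en $A$, $B$ et $C$, et pas seulement en $Z$ ; mais cela ne soulève aucune difficulté, toutes les bijections ainsi que l'isomorphisme d'associativité utilisés étant naturels en chacun de leurs arguments.
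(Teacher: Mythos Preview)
Your proof is correct and follows essentially the same route as the paper's: both argue via the Yoneda lemma, reducing each isomorphism to the associativity isomorphism $(T \otimes A) \otimest B \simeq T \otimest (A \otimest B)$ of proposition~\ref{prop:iso_otimest} combined with the defining adjunctions of $\Homidt$, $\Homigt$ and $\HomOpLax$. The only cosmetic difference is that the paper computes $\Hom_{\ooCat}(T,{-})$ of each side separately and then invokes the associativity isomorphism, whereas you write the chain of bijections in a single run; the content is identical.
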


\begin{proof}
  Soit $T$ une \oo-catégorie. Par adjonction, on a
  \[
    \Hom_{\ooCat}(T, \Homidt(A \otimest B, C)) \simeq
    \Hom_{\ooCat}(T \otimest (A \otimest B), C)
  \]
  et
  \[
    \begin{split}
    \Hom_{\ooCat}(T, \HomOpLax(A, \Homidt(B, C)))
    & \simeq
    \Hom_{\ooCat}(T \otimes A, \Homidt(B, C)) \\
    & \simeq
    \Hom_{\ooCat}((T \otimes A) \otimest B, C) \\
    \end{split}
  \]
  et on obtient le premier isomorphisme en vertu du lemme de Yoneda et de
  l'isomorphisme
  \[ T \otimest (A \otimest B) \simeq (T \otimes A) \otimest B \]
  de la proposition~\ref{prop:iso_otimest}. Le deuxième isomorphisme
  s'obtient de manière similaire en observant que
  \[ \Hom_{\ooCat}(T, \Homigt(A \otimes B, C)) \simeq \Hom_{\ooCat}((A
  \otimes B) \otimest T, C) \]
  et
  \[ \Hom_{\ooCat}(T, \Homigt(B, \Homigt(A, C)))
    \simeq
    \Hom_{\ooCat}(
    A \otimest (B \otimest T), C). \qedhere
  \]
\end{proof}

\begin{paragraph}
  Soient $A$ et $B$ deux \oo-catégories. Pour toute \oo-catégorie $S$, tout
  \oo-foncteur $h : S \otimest A \to B$ et toute \oo-catégorie~$C$,
  le composé
    \[ \Homidt(B, C) \to \Homidt(S \otimest A, C) \simeq \HomOpLax(S,
        \Homidt(A, C)) \pbox{,} \]
    où la première flèche est induite par $h$ et la seconde est un des
    isomorphismes de la proposition précédente, fournit par adjonction 
    un \oo-foncteur
    \[ \Homidt(B, C) \otimes S \to \Homidt(A, C), \]
    naturel en $A$, $B$, $S$, $h$ et $C$.

    De même, pour toute \oo-catégorie $S$, tout \oo-foncteur $k : A
    \otimes S \to B$ et toute \oo-catégorie $C$, le composé
      \[ \Homigt(B, C) \to \Homigt(A \otimes S, C) \simeq \Homigt(S,
      \Homigt(A, C)) \pbox{,} \]
      où la première flèche est induite par $k$, fournit par adjonction un
      \oo-foncteur
    \[ S \otimest \Homigt(B, C) \to \Homigt(A, C), \]
    naturel en $A$, $B$, $S$, $k$ et $C$.
\end{paragraph}

\begin{proposition}\label{prop:trans_lax_Homit}
  Soient $u, v : A \to B$ deux \oo-foncteurs et soit $C$ une \oo-catégorie.
  \begin{enumerate}
    \item Une transformation oplax de $u$ vers $v$ triviale sur les objets
      induit une transformation lax de $\Homidt(u, C)$ vers $\Homidt(v, C)$.
    \item Une transformation lax de $u$ vers $v$ induit une transformation
      oplax triviale sur les objets de $\Homigt(u, C)$ vers $\Homigt(v, C)$.
  \end{enumerate}
\end{proposition}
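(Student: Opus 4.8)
The plan is to exploit the two constructions introduced just before the statement: for a $\oo$-functor $h : S \otimest A \to B$ one gets a $\oo$-functor $\Homidt(B,C) \otimes S \to \Homidt(A,C)$, and dually for $k : A \otimes S \to B$ one gets $S \otimest \Homigt(B,C) \to \Homigt(A,C)$. The idea is that an oplax transformation from $u$ to $v$ trivial on objects is, by the very definition of $\Homidt$, the same thing as a $\oo$-functor $\Dn{1} \otimest A \to B$ (it is a $1$\nbd-cell of $\Homidt(A,B)$), so taking $S = \Dn{1}$ in the first construction yields a $\oo$-functor $\Homidt(B,C) \otimes \Dn{1} \to \Homidt(A,C)$, which is exactly the data of a lax transformation between $\oo$-functors $\Homidt(B,C) \to \Homidt(A,C)$. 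Similarly, a lax transformation from $u$ to $v$ is a $\oo$-functor $A \otimes \Dn{1} \to B$, so taking $S = \Dn{1}$ in the second construction gives a $\oo$-functor $\Dn{1} \otimest \Homigt(B,C) \to \Homigt(A,C)$, which is the data of an oplax transformation between $\oo$-functors $\Homigt(B,C) \to \Homigt(A,C)$; and because $\Dn{1} \otimest \Homigt(B,C)$ is a quotient of $\Dn{1} \otimes \Homigt(B,C)$ factoring through $\pi_0(\Dn{1}) \otimes (\dots)$, the resulting oplax transformation will automatically be trivial on objects.

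More precisely, here are the steps I would carry out. First, for part (\emph{a}): given the oplax transformation $H : \Dn{1} \otimest A \to B$ from $u$ to $v$ (trivial on objects), apply the first boxed construction with $S = \Dn{1}$ and $h = H$ to obtain a $\oo$-functor $G : \Homidt(B,C) \otimes \Dn{1} \to \Homidt(A,C)$. Second, check that $G$ restricted along $0 \otimes \Homidt(B,C)$, resp.\ $1 \otimes \Homidt(B,C)$ — after identifying $\Homidt(B,C)$ with $\Dn{0} \otimes \Homidt(B,C)$ — recovers $\Homidt(u,C)$, resp.\ $\Homidt(v,C)$; this is a naturality/unpacking computation, using that the source and target of the $1$\nbd-cell of $\Homidt(A,B)$ corresponding to $H$ are $u$ and $v$, together with the fact that the construction $h \mapsto (\Homidt(B,C)\otimes S \to \Homidt(A,C))$ is natural in $h$ and $S$, so that it sends the two inclusions $\Dn{0} = \Ob(\Dn{1}) \hookto \Dn{1}$ to the maps induced by $u$ and $v$. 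This exhibits $G$ as a lax transformation from $\Homidt(u,C)$ to $\Homidt(v,C)$. For part (\emph{b}), run the mirror argument with the second boxed construction: a lax transformation $K : A \otimes \Dn{1} \to B$ from $u$ to $v$ yields $G' : \Dn{1} \otimest \Homigt(B,C) \to \Homigt(A,C)$; restricting along $\varepsilon \otimest \Homigt(B,C)$ for $\varepsilon = 0,1$ (using $\Dn{0} \otimest \Homigt(B,C) \simeq \Homigt(B,C)$) gives $\Homigt(u,C)$ and $\Homigt(v,C)$, so $G'$ is an oplax transformation between them, and triviality on objects is forced by the description in \ref{paragr:desc_Homit} of when a $1$\nbd-cell $\Dn{1}\otimes X \to Y$ lies in $\Homidt(X,Y)$ applied to $X = \Homigt(B,C)$, since $G'$ by construction factors through $\Dn{1} \otimest \Homigt(B,C)$.

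The main obstacle I anticipate is not conceptual but bookkeeping: verifying that the "endpoints" of the transformations produced by the two boxed constructions are literally $\Homidt(u,C)$ and $\Homidt(v,C)$ (resp.\ the lax versions). This requires tracing through the chain $\Homidt(B,C) \to \Homidt(S\otimest A, C) \simeq \HomOpLax(S, \Homidt(A,C))$ and its adjoint transpose, composing with the inclusion $\Dn{0} \hookto \Dn{1}$, and using the isomorphism $T \otimest(A \otimest B) \simeq (T\otimes A)\otimest B$ of Proposition~\ref{prop:iso_otimest} to see that everything collapses correctly when $S$ is replaced by an object. A clean way to organize this is to check the identity after applying $\Hom_{\ooCat}(T, -)$ for arbitrary $T$ and invoking Yoneda, reducing each side to a statement about $\Hom$-sets out of $T \otimes \Dn{1} \otimest A$ (resp.\ $T \otimest \Dn{1} \otimest \dots$) that one matches termwise; the triviality-on-objects condition in (\emph{a}), i.e.\ that $H$ factors through $\Dn{1}\otimest A$, is precisely what makes the construction applicable, and in (\emph{b}) it is automatic on the output side.
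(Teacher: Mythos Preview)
Your proposal is correct and follows exactly the paper's approach: apply the construction of the paragraph immediately preceding the proposition with $S = \Dn{1}$, and invoke the description in paragraph~\ref{paragr:desc_Homit} of $1$\nbd-cells of $\Homidt(A,B)$ (oplax transformations trivial on objects) and of $\oo$-functors factoring through $\Dn{1} \otimest X$. The endpoint verification you worry about is absorbed by the naturality in $S$ and $h$ (resp.\ $k$) already asserted for those constructions, so the paper dispatches the whole proof in one sentence.
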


\begin{proof}
  Cela résulte du paragraphe précédent pour $S = \Dn{1}$, ainsi que de la
  description des \oo-foncteurs $\Dn{1} \otimest C \to D$ donnée au
  paragraphe~\ref{paragr:desc_Homit}.
\end{proof}

\section{Comparaison du nerf de Street et du nerf $n$-simplicial}

Dans toute cette section, on fixe un entier $n \ge 1$.

\begin{paragraph}
  On notera $\delta : \cDelta \to \cDelta^n$ le foncteur diagonal et
  $\delta^\ast : \pref{\cDelta^n} \to \pref{\cDelta}$ le foncteur qui s'en
  déduit par précomposition. Ainsi, si $X$ est un ensemble $n$-simplicial,
  alors $\delta^\ast X$ est l'ensemble simplicial défini par $(\delta^\ast
  X)_p = X_{p, \dots, p}$. On appellera \ndef{équivalences faibles $n$-simpliciales
  diagonales} les morphismes d'ensembles $n$-simpliciaux $f$ tels que
  $\delta^\ast f$ soit une équivalence d'homotopie faible simpliciale.
\end{paragraph}

On rappelle le résultat classique suivant :

\begin{lemma_bi_simpl}
  Soit $f$ un morphisme d'ensembles bisimpliciaux. Si pour tout $p \ge 0$ le
  morphisme d'ensembles simpliciaux $f_{p, \bullet}$ \resp{$f_{\bullet,
  p}$}, obtenu en fixant la première variable \resp{la deuxième variable}
  est une équivalence faible, alors $f$ est une équivalence faible
  diagonale.
\end{lemma_bi_simpl}

\begin{proof}
  Voir par exemple \cite[Chapitre XII, paragraphe 4.3]{BousKan}, ou
  \cite[proposition 2.1.7]{CisinskiLFM} pour une preuve plus moderne.
\end{proof}

Plus généralement :

\begin{lemmamulti}\label{lemme:multi_simpl}
  Soit $f$ un morphisme d'ensembles $n$\=/simpliciaux.
  S'il existe un sous-ensemble $I$ de $\{1, \dots, n\}$, de cardinal $k < n$,
  tel que, pour toute famille d'entiers positifs $(p_i)_{i \in I}$, le morphisme
  d'ensembles $(n-k)$-simpliciaux obtenu à partir de $f$ en fixant, pour tout
  $i$ dans $I$, la $i$-ième variable à la valeur $p_i$ est une équivalence
  faible diagonale, alors $f$ est une équivalence faible diagonale.
\end{lemmamulti}

\begin{proof}
  Cela résulte du lemme bisimplicial par récurrence.
\end{proof}

Le but de cette section est de produire, pour $C$ une $n$-catégorie, un
zigzag d'équivalences faibles simpliciales entre la diagonale du nerf
$n$-simplicial $\delta^\ast\Nsn{n}C$ de $C$ et le nerf de Street $NC$ de
$C$. Pour ce faire, on va introduire une construction intermédiaire $SC$
permettant de décomposer le nerf $n$-simplicial en $n$ étapes au sens où on
aura~$\Nsn{n}C \simeq S^nC$.

\begin{paragraph}\label{paragr:def_SC}
  Si $C$ est une \oo-catégorie, on notera $SC$ l'objet simplicial dans
  $\ooCat$ défini par
  \[
    \begin{split}
      \cDelta^\o & \to \ooCat \\
      \Deltan{p} & \mapsto S_pC = \Homidt(\Deltan{p}, C).
    \end{split}
  \]
\end{paragraph}

\begin{proposition}\label{prop:pu_SC}
  Soient $C$ une \oo-catégorie quelconque, $T$ une \oo-catégorie connexe et
  $p \ge 0$. On a une bijection canonique
  \[
    \Hom_{\ooCat}(T, S_pC) \simeq \Hom_{\ooCat}(\Deltan{p} \wr T, C)
  \]
  (voir le paragraphe~\ref{paragr:constr_W} pour la définition de
  $\Deltan{p} \wr T$) définissant un isomorphisme
  d'ensembles simpliciaux naturel en $C$ et $T$.
\end{proposition}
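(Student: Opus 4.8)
The plan is to trade the statement, via the adjunction defining $\Homidt$, for an identification of $T \otimest \Deltan{p}$ with $\Deltan{p} \wr T$ when $T$ is connected. Since $S_pC = \Homidt(\Deltan{p}, C)$ by paragraph~\ref{paragr:def_SC}, the adjunction bijection $\Hom_{\ooCat}(A, \Homidt(B, C)) \simeq \Hom_{\ooCat}(A \otimest B, C)$ gives, for each $p$, a bijection
\[ \Hom_{\ooCat}(T, S_pC) \simeq \Hom_{\ooCat}(T \otimest \Deltan{p}, C) \]
natural in $T$ and $C$; as $p$ varies, the right-hand side is a simplicial set via the cosimplicial $\infty$-category $\Deltan{\bullet} \mapsto T \otimest \Deltan{\bullet}$ (recall $\Deltan{\bullet}$ is cosimplicial in $\cDelta \subset \ooCat$ and $\otimest$ is functorial in its second variable), and the displayed bijection is an isomorphism of simplicial sets. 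It therefore suffices to produce, for every connected $\infty$-category $T$, an isomorphism of cosimplicial $\infty$-categories $\Deltan{\bullet} \wr T \simeq T \otimest \Deltan{\bullet}$, natural in $T$ and compatible with the two canonical maps from $\Dn{0}$.

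To do this I would first reduce to $p = 1$. The simplex $\Deltan{p}$ is the colimit in $\ooCat$ of its spine, that is, of the diagram obtained by gluing $p$ copies of $\Deltan{1}$ end to end along $\Deltan{0}$. Since $\otimest$ commutes with inductive limits in its second variable (by the proposition above to that effect) and $T \otimest \Deltan{0} \simeq \pi_0(T) \simeq \Dn{0}$ because $T$ is connected, the $\infty$-category $T \otimest \Deltan{p}$ is the colimit of the zigzag made of $p$ copies of $T \otimest \Deltan{1}$ glued along $\Dn{0}$ at the appropriate vertices. On the other hand, paragraph~\ref{paragr:constr_W} exhibits $\Deltan{p} \wr T = W((\Deltan{p}; T, \dots, T))$ as exactly the colimit of the analogous zigzag of $p$ copies of $\Sigma'T = \Deltan{1} \wr T$ glued along $\Dn{0}$. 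Hence it is enough to construct a natural isomorphism $T \otimest \Deltan{1} \simeq \Sigma'T$ matching the two images of $\Dn{0}$ coming from $0, 1 : \Deltan{0} \to \Deltan{1}$ with the two objects $0, 1$ of $\Sigma'T$; the cosimplicial naturality of the assembled isomorphism then follows, since $\cDelta$ is generated by faces and degeneracies, the spine decomposition is functorial, and the degeneracy $\Deltan{1} \to \Deltan{0}$ corresponds on both sides to $\Dn{0}$ by connectedness.

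For the case $p = 1$, unwinding the cocartesian square of paragraph~\ref{paragr:def_otimest} and using $\pi_0(T) \simeq \Dn{0}$ shows that $T \otimest \Deltan{1}$ is the quotient of the Gray cylinder $T \otimes \Deltan{1}$ obtained by collapsing each of its two end fibres $T \otimes \{0\}$ and $T \otimes \{1\}$ to a point. The key point is that this operation turns the Gray cylinder into the suspension, i.e. $T \otimest \Deltan{1} \simeq \Sigma'T$ compatibly with the two vertices. One way to see this: both $T \mapsto T \otimest \Deltan{1}$ and $T \mapsto \Sigma'T$ preserve connected colimits, so writing $T$ as a connected colimit of disks reduces the claim to the elementary combinatorial verification that collapsing the two end fibres of $\Dn{i} \otimes \Deltan{1}$ yields $\Sigma'\Dn{i}$, which is immediate for $i = 0, 1$ and in general follows from the known description of the Gray product of disks. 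Equivalently, by the explicit description in paragraph~\ref{paragr:desc_Homit}, an $i$-cell of $\Homidt(\Deltan{1}, C)$ is an $\infty$-functor $\Dn{i} \otimes \Deltan{1} \to C$ trivial on the two end fibres, hence — by that same combinatorial fact — a pair of objects $c_0, c_1$ of $C$ together with an $i$-cell of $\Homi_C(c_0, c_1)$; thus $\Homidt(\Deltan{1}, C) \simeq \coprod_{c_0, c_1 \in \Ob(C)} \Homi_C(c_0, c_1)$, the two projections to $\Ob(C)$ being source and target. As $T$ is connected, $\Hom_{\ooCat}(T, {-})$ sends this coproduct to $\coprod_{c_0, c_1} \Hom_{\ooCat}(T, \Homi_C(c_0, c_1))$, which by the description of $\Sigma'T$ as a freely generated enriched category (paragraph~\ref{paragr:constr_W}) is precisely $\Hom_{\ooCat}(\Sigma'T, C)$, compatibly with the vertices; the Yoneda lemma then yields $\Sigma'T \simeq T \otimest \Deltan{1}$.

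The main obstacle is the $p = 1$ step, and within it the combinatorial fact that collapsing the two end fibres of a Gray cylinder $A \otimes \Deltan{1}$ produces the suspension $\Sigma'A$ — equivalently, the identification $\Homidt(\Deltan{1}, C) \simeq \coprod_{c_0, c_1} \Homi_C(c_0, c_1)$ — which requires a careful analysis of the cell structure of Gray products and of oplax transformations in the spirit of paragraph~\ref{paragr:desc_Homit}, with particular attention to orientations (so that the $p = 1$ isomorphism is set up compatibly with the cosimplicial structures on both sides). Everything else — the commutation of the two functors with connected colimits, naturality in $T$ and $C$, and compatibility of the assembled isomorphism with all the simplicial operators — is routine bookkeeping once the $p = 1$ case is established.
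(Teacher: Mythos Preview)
Your proposal is correct and follows essentially the same route as the paper: use the adjunction to reduce to showing $T \otimest \Deltan{p} \simeq \Deltan{p} \wr T$, then use the spine decomposition of $\Deltan{p}$ and cocontinuity of $\otimest$ to reduce to $p=1$, i.e.\ to $T \otimest \Dn{1} \simeq \Sigma'T$. The only difference is in this last step: the paper simply observes that $T \otimest \Dn{1} \simeq \Dn{0} \amalg_T (T \otimes \Dn{1}) \amalg_T \Dn{0}$ and invokes \cite[corollaire~B.6.6]{AraMaltsiJoint}, which states precisely that collapsing the two ends of the Gray cylinder yields $\Sigma'T$, whereas you sketch two direct arguments (reduction to disks via connected colimits, or an explicit description of $\Homidt(\Dn{1}, C)$); either works, but the cited result makes the proof a one-liner.
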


\begin{proof}
  Par adjonction, on a
  \[
    \begin{split}
    \Hom_{\ooCat}(T, S_pC)
    & = \Hom_{\ooCat}(T, \Homidt(\Deltan{p}, C)) \\
    & \simeq \Hom_{\ooCat}(T \otimest \Deltan{p}, C)
    \end{split}
  \]
  et il s'agit en fait de montrer que $T \otimest \Deltan{p}$ est
  canoniquement isomorphe à $\Deltan{p} \wr T$. Puisque le foncteur $T
  \otimest {-}$ commute aux limites inductives,  que $\Deltan{p} \simeq
  \Dn{1} \amalg_{\Dn{0}} \dots \amalg_{\Dn{0}} \Dn{1}$ et
  que $T \otimest \Dn{0} \simeq \Dn{0}$ (car $T$ est connexe, voir
  le paragraphe~\ref{paragr:def_otimest}), on a
  \[
  T \otimest \Deltan{p} \simeq T \otimest \Dn{1} \amalg_{\Dn{0}} \cdots
  \amalg_{\Dn{0}} T \otimest \Dn{1}.
  \]
  Par ailleurs, en vertu du paragraphe~\ref{paragr:constr_W}, et avec ses
  notations, on a
  \[
    \Deltan{p} \wr T \simeq \Sigma'T \amalg_{\Dn{0}} \cdots \amalg_{\Dn{0}}
    \Sigma' T.
  \]
  Le résultat découle donc du lemme suivant :
\end{proof}

\begin{lemma}
  Soit $T$ une \oo-catégorie connexe. On a un isomorphisme canonique
  \[ T \otimest \Dn{1} \simeq \Sigma'T, \]
  où $\Sigma'T$ désigne la \oo-catégorie introduite au
  paragraphe~\ref{paragr:constr_W}, naturel en $T$.
\end{lemma}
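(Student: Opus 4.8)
La stratégie est d'établir, pour toute \oo-catégorie $C$, une bijection
\[
  \Hom_{\ooCat}(T \otimest \Dn{1}, C) \simeq \Hom_{\ooCat}(\Sigma'T, C)
\]
naturelle en $T$ et en $C$, puis de conclure par le lemme de Yoneda. Le membre de droite se décrit immédiatement : par définition de $\Sigma'T$ (paragraphe~\ref{paragr:constr_W}), un \oo-foncteur $\Sigma'T \to C$ équivaut à la donnée d'un couple d'objets $(c_0, c_1)$ de $C$, images de $0$ et $1$, et d'un \oo-foncteur $T = \Homi_{\Sigma'T}(0, 1) \to \Homi_C(c_0, c_1)$, les autres données étant contraintes ; on a donc
\[
  \Hom_{\ooCat}(\Sigma'T, C)
  \simeq \coprod_{(c_0, c_1) \in \Ob(C) \times \Ob(C)} \Hom_{\ooCat}(T, \Homi_C(c_0, c_1)),
\]
naturellement en $T$ et $C$. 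Du côté gauche, par adjonction, $\Hom_{\ooCat}(T \otimest \Dn{1}, C) \simeq \Hom_{\ooCat}(T, \Homidt(\Dn{1}, C))$, de sorte qu'il reste à comprendre $\Homidt(\Dn{1}, C)$.

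Le cœur de la preuve est l'identification canonique
\[
  \Homidt(\Dn{1}, C) \simeq \coprod_{(c_0, c_1) \in \Ob(C) \times \Ob(C)} \Homi_C(c_0, c_1),
\]
naturelle en $C$, où le facteur indexé par $(c_0, c_1)$ est la sous-\oo-catégorie pleine de $\Homidt(\Dn{1}, C)$ formée des $1$-cellules de $C$ de source $c_0$ et de but $c_1$. En effet, d'après le paragraphe~\ref{paragr:desc_Homit}, une $i$-cellule de $\Homidt(\Dn{1}, C)$ s'identifie à un \oo-foncteur $h : \Dn{i} \otimes \Dn{1} \to C$ tel que, pour tout objet $\e$ de $\Dn{1}$, le composé $\Dn{i} \otimes \{\e\} \to \Dn{i} \otimes \Dn{1} \xto{h} C$ soit constant. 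Pour $i = 0$, on retrouve que les objets de $\Homidt(\Dn{1}, C)$ sont exactement les $1$-cellules de $C$, chacune de source et de but déterminés. Pour $i \ge 1$, en interprétant un tel $h$ comme une transformation oplax triviale sur les objets entre deux \oo-foncteurs $\Dn{1} \to C$ nécessairement parallèles, de $1$-source $c_0$ et de $1$-but $c_1$, on vérifie qu'il équivaut à la donnée d'une $i$-cellule de $\Homi_C(c_0, c_1)$, et que cette correspondance respecte sources, buts, unités et compositions (la composition dans $\Homidt(\Dn{1}, C)$ conservant de toute façon le couple $(c_0, c_1)$). C'est dans ce dévissage de la structure du produit de Gray $\Dn{i} \otimes \Dn{1}$ que réside l'essentiel de la difficulté ; on pourra, au besoin, s'appuyer sur l'étude systématique des transformations oplax menée dans l'appendice~B de~\cite{AraMaltsiJoint}.

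Il ne reste plus qu'à assembler. Comme $T$ est connexe, tout \oo-foncteur de source $T$ et de but une somme de \oo-catégories se factorise par l'un des facteurs, d'où
\[
  \Hom_{\ooCat}(T, \Homidt(\Dn{1}, C))
  \simeq \coprod_{(c_0, c_1)} \Hom_{\ooCat}(T, \Homi_C(c_0, c_1)).
\]
En combinant les bijections obtenues, on obtient la bijection voulue, visiblement naturelle en $T$ et en $C$, et le lemme de Yoneda fournit l'isomorphisme canonique $T \otimest \Dn{1} \simeq \Sigma'T$, naturel en $T$. On pourrait aussi raisonner plus directement en observant, à l'aide du carré cocartésien du paragraphe~\ref{paragr:def_otimest} et de l'isomorphisme $\pi_0(T) \simeq \Dn{0}$, que $T \otimest \Dn{1}$ s'obtient à partir de $T \otimes \Dn{1}$ en écrasant $T \otimes \{0\}$ et $T \otimes \{1\}$ sur des points, puis en construisant le \oo-foncteur de comparaison vers $\Sigma'T$ à la main ; mais la vérification de sa bijectivité revient in fine au même dévissage.
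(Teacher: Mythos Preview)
Your argument is correct, and in fact you already sketch the paper's own approach in your final remark. The paper proceeds exactly as you outline there: using the defining cocartesian square of $\otimest$ together with the connectedness of $T$ (so that $\pi_0(T) \simeq \Dn{0}$), one gets
\[
  T \otimest \Dn{1} \simeq \Dn{0} \amalg_T (T \otimes \Dn{1}) \amalg_T \Dn{0},
\]
and the paper then simply invokes \cite[corollaire~B.6.6]{AraMaltsiJoint}, which asserts precisely (up to a duality) that this amalgamated sum is $\Sigma'T$.

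Your main route, via Yoneda and the identification $\Homidt(\Dn{1}, C) \simeq \coprod_{(c_0,c_1)} \Homi_C(c_0,c_1)$, is a legitimate alternative, but observe that this identification is essentially the representable form of the very same corollary: tested against the disks $T = \Dn{i}$, it amounts to $\Dn{i} \otimest \Dn{1} \simeq \Sigma'\Dn{i}$ naturally in $i$, which is exactly B.6.6 specialised to disks. So the technical content is identical; what you call ``le dévissage de la structure du produit de Gray $\Dn{i} \otimes \Dn{1}$'' is precisely what that corollary packages. The paper's version has the advantage of citing a ready-made statement rather than unfolding it on the representable side; your version has the merit of making explicit where connectedness is used (to split $\Hom_{\ooCat}(T,\coprod\cdots)$ as a coproduct) and of exhibiting the decomposition of $\Homidt(\Dn{1},C)$, which is of independent interest.
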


\begin{proof}
  Par définition (et connexité de $T$), le carré
  \[
    \xymatrix{
      T \amalg T \ar[r] \ar[d] &
      \Dn{0} \amalg \Dn{0} \ar[d] \\ T \otimes \Dn{1} \ar[r] &
      \pushoutcorner T \otimest \Dn{1}
    }
  \]
  est cocartésien ; autrement dit, on a
  \[
    T \otimest \Dn{1} \simeq \Dn{0} \amalg_T (T \otimes \Dn{1})
    \amalg_T \Dn{0}.
  \]
  Le résultat découle donc de \cite[corollaire B.6.6]{AraMaltsiJoint} qui
  affirme, à une dualité près, que cette somme amalgamée est isomorphe à
  $\Sigma' T$.
\end{proof}

\begin{corollary}\label{coro:desc_SC}
  Soit $C$ une \oo-catégorie. Pour tout $p \ge 0$, on a un isomorphisme
  canonique
  \[
    S_pC \simeq \coprod_{c_0, \dots, c_p \in \Ob(C)} \Homi_C(c_0, c_1) \times \cdots
      \times \Homi_C(c_{p-1}, c_p)
  \]
  définissant un isomorphisme d'objets simpliciaux naturel en $C$.
  En particulier, si $C$ est une $n$-catégorie, avec $n \ge 1$, alors $S_pC$
  est une $(n-1)$\=/catégorie.
\end{corollary}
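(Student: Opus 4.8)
L'idée est de déterminer $S_pC = \Homidt(\Deltan p, C)$ en testant contre les \oo-catégories connexes, puis de descendre des foncteurs représentables aux objets de $\ooCat$. Fixons donc une \oo-catégorie connexe $T$. En vertu de la proposition~\ref{prop:pu_SC} (et du calcul de sa preuve, reposant sur le paragraphe~\ref{paragr:constr_W}), on a
\[
  \Hom_{\ooCat}(T, S_pC) \simeq \Hom_{\ooCat}(\Deltan p \wr T, C)
  \quadet
  \Deltan p \wr T \simeq \Sigma'T \amalg_{\Dn 0} \cdots \amalg_{\Dn 0} \Sigma'T
\]
($p$ copies de $\Sigma'T$), d'où
\[
  \Hom_{\ooCat}(T, S_pC) \simeq
  \Hom_{\ooCat}(\Sigma'T, C) \times_{\Ob C} \cdots \times_{\Ob C}
  \Hom_{\ooCat}(\Sigma'T, C),
\]
les applications $\Hom_{\ooCat}(\Sigma'T, C) \to \Ob C$ étant induites par les deux objets $0$ et $1$ de $\Sigma'T$.

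Ensuite, comme $\Sigma'T$ est la \oo-catégorie librement engendrée par une unique « $1$\nbd-flèche de forme $T$ » entre deux objets $0$ et $1$ (voir le paragraphe~\ref{paragr:constr_W}), un \oo-foncteur $\Sigma'T \to C$ revient à la donnée de deux objets $c, c'$ de $C$ et d'un \oo-foncteur $T \to \Homi_C(c, c')$; ainsi
\[
  \Hom_{\ooCat}(\Sigma'T, C) \simeq
  \coprod_{c, c' \in \Ob C} \Hom_{\ooCat}(T, \Homi_C(c, c')),
\]
l'application vers $\Ob C$ induite par $0$ \resp{$1$} étant la projection envoyant cette donnée sur $c$ \resp{$c'$}. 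En injectant ceci dans ce qui précède et en utilisant que $\Hom_{\ooCat}(T, -)$ commute aux sommes, $T$ étant connexe, le produit fibré itéré devient
\[
  \coprod_{c_0, \dots, c_p \in \Ob C}
    \Hom_{\ooCat}(T, \Homi_C(c_0, c_1)) \times \cdots
    \times \Hom_{\ooCat}(T, \Homi_C(c_{p-1}, c_p)),
\]
lequel, de nouveau par connexité de $T$, n'est autre que $\Hom_{\ooCat}\big(T, \coprod_{c_0, \dots, c_p} \Homi_C(c_0, c_1) \times \cdots \times \Homi_C(c_{p-1}, c_p)\big)$.

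On dispose donc, pour toute \oo-catégorie connexe $T$ et naturellement en $C$, d'un isomorphisme $\Hom_{\ooCat}(T, S_pC) \simeq \Hom_{\ooCat}\big(T, \coprod_{c_0,\dots,c_p} \Homi_C(c_0,c_1) \times \cdots \times \Homi_C(c_{p-1},c_p)\big)$. Les deux membres sont des sommes de \oo-catégories connexes, et un foncteur représentable sur $\ooCat$ est déterminé par sa restriction aux \oo-catégories connexes (une \oo-catégorie connexe étant limite inductive, indexée par une petite catégorie connexe, de disques $\Dn i$); en appliquant cela composante par composante on obtient l'isomorphisme de \oo-catégories annoncé. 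Il est naturel en $C$ par construction, et compatible aux structures simpliciales : les faces et dégénérescences de $SC$ proviennent des cofaces et codégénérescences de $\Deltan\bullet$ qui, \emph{via} la décomposition $\Deltan p \simeq \Dn 1 \amalg_{\Dn 0} \cdots \amalg_{\Dn 0} \Dn 1$, reviennent à supprimer un $\Dn 1$ extrémal, à composer deux copies consécutives de $\Dn 1$ au milieu, ou à insérer un $\Dn 1$ identité; en suivant ces opérations à travers l'identification, elles correspondent respectivement à la suppression de $c_0$ ou $c_p$, à la composition $\Homi_C(c_{i-1}, c_i) \times \Homi_C(c_i, c_{i+1}) \to \Homi_C(c_{i-1}, c_{i+1})$, et à l'insertion d'une identité, c'est-à-dire à la structure simpliciale usuelle du membre de droite.

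Enfin, si $C$ est une $n$-catégorie avec $n \ge 1$, chacune des \oo-catégories $\Homi_C(c_{i-1}, c_i)$ est une $(n-1)$-catégorie, et comme les $(n-1)$-catégories sont stables par sommes quelconques et produits finis dans $\ooCat$, le membre de droite, donc $S_pC$, est une $(n-1)$-catégorie. Le seul point non formel de cette preuve est la descente de l'avant-dernier paragraphe : justifier que les \oo-catégories connexes forment une classe assez grande pour reconstituer les foncteurs représentables, et mener à bien la vérification (routinière mais un peu fastidieuse) que la comparaison obtenue est bien compatible aux structures simpliciales.
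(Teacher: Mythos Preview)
Your proof is correct and follows essentially the same approach as the paper: test $S_pC$ against connected $\infty$-categories via proposition~\ref{prop:pu_SC}, unfold the iterated fiber product over $\Ob(C)$ using the universal property of $\Sigma'T$, and conclude by density of the connected $\infty$-categories in $\ooCat$. The paper is simply more terse, invoking density in one sentence and leaving the simplicial compatibility and the $(n-1)$-category conclusion implicit; your added explanations are accurate elaborations of the same argument.
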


\begin{proof}
  Soit $T$ une \oo-catégorie connexe. En utilisant la
  proposition~\ref{prop:pu_SC}, on obtient
  \[
    \begin{split}
    \Hom_{\ooCat}(T, S_pC) & \simeq \Hom_{\ooCat}(\Deltan{p} \wr T, C) \\
    & \simeq \Hom_{\ooCat}(\Sigma'T \amalg_{\Dn{0}} \cdots \amalg_{\Dn{0}}
    \Sigma' T, C) \\
    & \simeq \Hom_{\ooCat}(\Sigma'T, C) \times_{\Ob(C)} \cdots
    \times_{\Ob(C)} \Hom_{\ooCat}(\Sigma'T, C).
    \end{split}
  \]
  Or, essentiellement par définition, la donnée d'un \oo-foncteur $\Sigma' T
  \to C$ correspond exactement à celle de deux objets $c$ et $c'$ de $C$ et
  d'un \oo-foncteur $T \to \Homi_C(c, c')$, d'où le résultat, par densité de
  la sous-catégorie pleine de $\ooCat$ formée des \oo-catégories connexes.
\end{proof}

\begin{paragraph}\label{paragr:def_Sn}
  Si $C$ est une \oo-catégorie, on notera $S^nC$ l'objet $n$-simplicial dans
  $\ooCat$ obtenu en itérant $n$ fois $S$, c'est-à-dire donné par
  \[
    \begin{split}
      {(\cDelta^n)}^\o & \to \ooCat \\
      (\Deltan{p_1}, \dots, \Deltan{p_n}) & \mapsto S_{p_n}\cdots S_{p_1}C.
    \end{split}
  \]
  En vertu du corollaire précédent, si $C$ est une $n$-catégorie, alors
  $S_{p_n}\cdots S_{p_1}C$ est un ensemble et on obtient donc un foncteur
  \[
    \begin{split}
      \nCat{n} & \to \pref{\cDelta^n} \\
      C & \mapsto S^nC.
    \end{split}
  \]
  Notons que lorsque $n = 1$, il résulte du corollaire précédent que ce
  foncteur n'est autre que le nerf usuel.
\end{paragraph}

\begin{proposition}\label{prop:lien_N_Nsn}
  Si $C$ est une $n$-catégorie, alors il existe un
  isomorphisme canonique d'ensembles $n$-simpliciaux
  \[ S^nC \simeq \Nsn{n}C, \]
  naturel en $C$.
\end{proposition}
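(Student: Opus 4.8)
The plan is to proceed by induction on $n$. For the base case $n = 1$, both $S^1C = SC$ and $\Nsn{1}C$ are the ordinary nerve of the category $C$: for $\Nsn{1}$ this is immediate from the definition, and for $SC$ it is the remark at the end of paragraph~\ref{paragr:def_Sn}, itself a consequence of Corollary~\ref{coro:desc_SC}. (One could equally take $n = 0$ as base case, where $S^0C = C = \Nsn{0}C$.)

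For the inductive step, assume the statement holds for $n - 1$, with its naturality, and fix an $n$-category $C$. The first point is to recognise the $n$-simplicial set $S^nC$ as the result of applying the functor $S^{n-1}$, argument by argument, to the simplicial object $SC$ in $\ooCat$: by construction
\[
  (S^nC)_{p_1, \dots, p_n} = S_{p_n} \cdots S_{p_1} C
  = \bigl(S^{n-1}(S_{p_1}C)\bigr)_{p_2, \dots, p_n},
\]
where $S_{p_1}C$ is an $(n-1)$-category by Corollary~\ref{coro:desc_SC}; the face and degeneracy operators in the variables $p_2, \dots, p_n$ are those of the $(n-1)$-simplicial set $S^{n-1}(S_{p_1}C)$, while those in the variable $p_1$ are obtained by applying $S^{n-1}$ to the simplicial operators of $SC$.

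I would then produce the desired isomorphism as a composite of three isomorphisms, defined one simplicial variable at a time but compatible with the full $n$-simplicial structure. First, the induction hypothesis applied to the $(n-1)$-category $S_{p_1}C$ yields an isomorphism of $(n-1)$-simplicial sets $S^{n-1}(S_{p_1}C) \simeq \Nsn{n-1}(S_{p_1}C)$; being natural in $S_{p_1}C$, it is compatible with the $p_1$-direction operators coming from $SC$, so together these give an isomorphism of $n$-simplicial sets between $S^nC$ and $(p_1, \dots, p_n) \mapsto \Nsn{n-1}(S_{p_1}C)_{p_2, \dots, p_n} = \Hom_{\ooCat}(\Deltan{p_2} \wr \cdots \wr \Deltan{p_n}, S_{p_1}C)$. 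Second, each $\Deltan{p_2} \wr \cdots \wr \Deltan{p_n} = M_{n-1}(\Deltan{p_2}, \dots, \Deltan{p_n})$ is connected — a point I would settle by a short induction, using that $\Deltan{p} \wr T$ is, by paragraph~\ref{paragr:constr_W}, a finite iterated pushout along $\Dn{0}$ of copies of $\Sigma'T$, which is connected as soon as $T$ is nonempty — so Proposition~\ref{prop:pu_SC} applies and gives, naturally in $C$ and in $T = \Deltan{p_2} \wr \cdots \wr \Deltan{p_n}$, an isomorphism of simplicial sets in the variable $p_1$
\[
  \Hom_{\ooCat}(\Deltan{p_2} \wr \cdots \wr \Deltan{p_n}, S_{p_1}C)
  \simeq
  \Hom_{\ooCat}\bigl(\Deltan{p_1} \wr (\Deltan{p_2} \wr \cdots \wr \Deltan{p_n}), C\bigr);
\]
naturality in $T$ through the functor $M_{n-1}$ provides compatibility in the other $n - 1$ directions. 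Third, by the recursive definition of $M_n$ one has $\Deltan{p_1} \wr (\Deltan{p_2} \wr \cdots \wr \Deltan{p_n}) = \Deltan{p_1} \wr \cdots \wr \Deltan{p_n} = M_n(\Deltan{p_1}, \dots, \Deltan{p_n})$, compatibly with all simplicial operators, so the right-hand side above is precisely $\Nsn{n}(C)_{p_1, \dots, p_n}$. Composing, and using the naturality in $C$ of each ingredient, one obtains the required natural isomorphism $S^nC \simeq \Nsn{n}C$.

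The substantive difficulty is not in any single step but in the bookkeeping: one must check that each of the three isomorphisms above respects \emph{every} face and degeneracy operator in \emph{every} one of the $n$ simplicial directions, and not merely that it is a bijection in each multidegree. This is exactly what the ``isomorphism of (multi)simplicial sets, natural in $\ldots$'' clauses in the induction hypothesis and in Proposition~\ref{prop:pu_SC}, combined with the functoriality of $S$, of $S^{n-1}$ and of $M_n$, are there to provide, so this verification is routine though lengthy. The only genuinely new (if minor) ingredient is the connectedness of $M_{n-1}(\Deltan{p_2}, \dots, \Deltan{p_n})$, needed to invoke Proposition~\ref{prop:pu_SC}.
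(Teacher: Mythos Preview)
Your proof is correct and rests on the same key ingredient as the paper's—repeated use of Proposition~\ref{prop:pu_SC}—but the paper dispenses with the induction and simply unwinds the chain of bijections directly:
\[
  (S^nC)_{p_1,\dots,p_n} \simeq \Hom_{\ooCat}(\Dn{0}, S_{p_n}\cdots S_{p_1}C)
  \simeq \Hom_{\ooCat}(\Deltan{p_n}, S_{p_{n-1}}\cdots S_{p_1}C)
  \simeq \cdots
  \simeq \Hom_{\ooCat}(\Deltan{p_1}\wr\cdots\wr\Deltan{p_n}, C),
\]
each step being one application of Proposition~\ref{prop:pu_SC} with a connected $T$; your inductive packaging is a harmless reorganisation of the same computation, and your explicit remark on the connectedness of the $M_{n-1}(\Deltan{p_2},\dots,\Deltan{p_n})$ is implicit in the paper's chain as well.
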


\begin{proof}
  Si $p_1, \dots, p_n$ sont $n$ entiers, en utilisant la proposition~\ref{prop:pu_SC},
  on obtient
  \[
    \begin{split}
      (S^nC)_{p_1, \dots, p_n} & \simeq
      \Hom_{\ooCat}(\Dn{0}, (S^nC)_{p_1, \dots, p_n}) \\
      & \simeq \Hom_{\ooCat}(\Dn{0}, S_{p_n}\cdots S_{p_1}C) \\
      & \simeq \Hom_{\ooCat}(\Deltan{p_n} \wr \Dn{0}, S_{p_{n-1}}\cdots S_{p_1}C) \\
      & \simeq \Hom_{\ooCat}(\Deltan{p_n}, S_{p_{n-1}}\cdots S_{p_1}C) \\
      & \simeq \Hom_{\ooCat}(\Deltan{p_{n-1}} \wr \Deltan{p_n}, S_{p_{n-2}}\cdots S_{p_1}C) \\
      & \simeq \cdots \\
      & \simeq \Hom_{\ooCat}(\Deltan{p_1} \wr \cdots \wr \Deltan{p_n}, C) \\
      & \simeq (\Nsn{n}C)_{p_1, \dots, p_n},
    \end{split}
  \]
  d'où le résultat.
\end{proof}

On va maintenant produire un zigzag d'équivalences faibles simpliciales
entre l'ensemble simplicial $\delta^\ast NSC$ et le nerf de Street $NC$.

\begin{paragraph}
  Soit $C$ une \oo-catégorie. On va considérer son nerf de Street $NC$ comme
  un ensemble bisimplicial constant en la deuxième variable :
  \[
    N_{p,q}C = N_pC = \Hom_{\ooCat}(\On{p}, C).
  \]
  Notons qu'on a
  \[
      N_{p,q}C \simeq \Hom_{\ooCat}(\Dn{0} \otimest \On{p}, C).
  \]

  On va comparer cet ensemble bisimplicial avec l'ensemble bisimplicial
  $NSC$ dont l'ensemble des $(p, q)$-simplexes est
  \[
      (NS)_{p,q}C = N_qS_pC = \Hom_{\ooCat}(\On{q}, S_pC)
            = \Hom_{\ooCat}(\On{q}, \Homidt(\Deltan{p}, C)).
  \]
  Par adjonction, on a
  \[
    (NS)_{p,q}C \simeq \Hom_{\ooCat}(\On{q} \otimest \Deltan{p}, C).
  \]

  Afin de comparer ces deux ensembles bisimpliciaux, on introduit l'ensemble
  bisimplicial intermédiaire $XC$ défini par
  \[ X_{p,q}C = \Hom_{\ooCat}(\On{q} \otimest \On{p}, C). \]
\end{paragraph}

\begin{paragraph}\label{paragr:zigzag-1}
  Pour $p \ge 0$, le $1$-tronqué de $\On{p}$, c'est-à-dire la $1$-catégorie
  obtenue à partir de~$\On{p}$ en identifiant deux $1$-cellules lorsqu'elles
  sont reliées par un zigzag de $2$\=/cellules, est canoniquement isomorphe à
  $\Deltan{p}$. Ainsi, on dispose d'un \oo-foncteur canonique~$\On{p} \to
  \Deltan{p}$. Par ailleurs, pour $q \ge 0$, on a un unique \oo-foncteur
  $\On{q} \to \Dn{0}$. Ces deux \oo-foncteurs induisent, pour toute
  \oo-catégorie $C$, des applications
  \[
  \Hom_{\ooCat}(\Dn{0} \otimest \On{p}, C)
  \xto{}
  \Hom_{\ooCat}(\On{q} \otimest \On{p}, C)
  \xot{}
  \Hom_{\ooCat}(\On{q} \otimest \Deltan{p}, C)
  \]
  définissant des morphismes d'ensembles bisimpliciaux naturels en $C$.
  Ainsi, on dispose de morphismes d'ensembles bisimpliciaux
  \[ NC \xto{} XC \xot{} NSC \]
  naturels en $C$.
\end{paragraph}

\begin{proposition}\label{prop:zigzag_base}
  Pour toute \oo-catégorie $C$, les morphismes
  \[ NC \xto{} XC \xot{} NSC \]
  du paragraphe précédent sont des équivalences faibles diagonales.
\end{proposition}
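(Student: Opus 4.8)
The plan is to apply the bisimplicial lemma: for each of the two morphisms it suffices to check that, upon fixing one of the two simplicial variables, the resulting morphism of simplicial sets is a weak equivalence. We shall fix \emph{different} variables for the two morphisms, and exploit the two adjunctions of Section~2.

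\emph{The morphism $NSC \to XC$.} Fix $p \ge 0$. The adjunction $\Hom_{\ooCat}(\On{q} \otimest \On{p}, C) \simeq \Hom_{\ooCat}(\On{q}, \Homidt(\On{p}, C))$ identifies $XC_{p, \bullet}$ with the Street nerve $N\bigl(\Homidt(\On{p}, C)\bigr)$, and likewise $NSC_{p, \bullet}$ with $N(S_pC) = N\bigl(\Homidt(\Deltan{p}, C)\bigr)$, the morphism in question becoming the image under $N$ of the \oo-foncteur $\Homidt(r, C) : \Homidt(\Deltan{p}, C) \to \Homidt(\On{p}, C)$ induced by the canonical \oo-foncteur $r : \On{p} \to \Deltan{p}$ of paragraph~\ref{paragr:zigzag-1}. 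Now the appendix shows that $r$ is a retraction of a retract by oplax transformation: there is a \oo-foncteur $\sigma : \Deltan{p} \to \On{p}$ with $r \o \sigma = \id{\Deltan{p}}$, together with an oplax transformation, trivial on objects, between $\sigma r$ and $\id{\On{p}}$. By Proposition~\ref{prop:trans_lax_Homit}~(\emph{a}) we obtain a lax transformation between $\Homidt(r, C) \o \Homidt(\sigma, C) = \Homidt(\sigma r, C)$ and the identity of $\Homidt(\On{p}, C)$, while $\Homidt(\sigma, C) \o \Homidt(r, C)$ is the identity of $\Homidt(\Deltan{p}, C)$. Since the Street nerve sends lax (and oplax) transformations to simplicial homotopies, $N\Homidt(r, C)$ is a simplicial homotopy equivalence, hence a weak equivalence, and the bisimplicial lemma applies.

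\emph{The morphism $NC \to XC$.} Fix this time $q \ge 0$ and use instead the other adjunction $\Hom_{\ooCat}(\On{q} \otimest \On{p}, C) \simeq \Hom_{\ooCat}(\On{p}, \Homigt(\On{q}, C))$; since $\Hom_{\ooCat}(\On{p}, {-})$ computes the degree\nobreakdash-$p$ part of the Street nerve, this identifies $XC_{\bullet, q}$ with $N\bigl(\Homigt(\On{q}, C)\bigr)$. On the other hand, in view of the isomorphism $N_{p, q}C \simeq \Hom_{\ooCat}(\Dn{0} \otimest \On{p}, C)$ of paragraph~\ref{paragr:zigzag-1} together with $\Dn{0} \otimest {-} \simeq {-}$, the simplicial set $NC_{\bullet, q} = NC$ is identified with $N\bigl(\Homigt(\Dn{0}, C)\bigr) \simeq N(C)$, the morphism becoming the image under $N$ of $\Homigt(e_q, C) : \Homigt(\Dn{0}, C) \to \Homigt(\On{q}, C)$ induced by the canonical \oo-foncteur $e_q : \On{q} \to \Dn{0}$. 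The appendix shows that $e_q$ is a retraction of a retract by lax transformation: there is $s_q : \Dn{0} \to \On{q}$ with $e_q \o s_q = \id{\Dn{0}}$ and a lax transformation between $s_q e_q$ and $\id{\On{q}}$. By Proposition~\ref{prop:trans_lax_Homit}~(\emph{b}) we obtain an oplax transformation, trivial on objects, between $\Homigt(e_q, C) \o \Homigt(s_q, C) = \Homigt(s_q e_q, C)$ and the identity of $\Homigt(\On{q}, C)$, while $\Homigt(s_q, C) \o \Homigt(e_q, C)$ is the identity of $\Homigt(\Dn{0}, C)$. As before, $N\Homigt(e_q, C)$ is a simplicial homotopy equivalence, and the bisimplicial lemma applies.

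The only external inputs are thus the existence, established in the appendix, of the two retracts by transformation attached respectively to $\On{q} \to \Dn{0}$ (lax transformation) and to $\On{p} \to \Deltan{p}$ (oplax transformation, trivial on objects — a point one must be careful to secure, as required by Proposition~\ref{prop:trans_lax_Homit}~(\emph{a})), together with the (classical) fact that the Street nerve sends lax and oplax transformations to simplicial homotopies. The real subtlety is to choose, for each of the two morphisms, the correct simplicial variable to freeze and the correct adjunction of $\otimest$: it is the use of $\Homigt$ (hence of the variable~$q$) that makes $NC \to XC$ tractable, the apparently natural choice of fixing~$p$ and going through $\Homidt(\On{p}, C)$ being doomed, since by Corollary~\ref{coro:desc_SC} the simplicial set $N\bigl(\Homidt(\On{p}, C)\bigr)$ is weakly equivalent to $N(S_pC)$, which is not homotopically discrete in general. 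Everything else is formal manipulation of the adjunctions of Section~2 and of Proposition~\ref{prop:trans_lax_Homit}.
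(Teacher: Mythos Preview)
Your proof is correct and follows essentially the same approach as the paper: for each morphism you fix the appropriate simplicial variable, use the corresponding adjunction of $\otimest$ to rewrite the resulting morphism of simplicial sets as the Street nerve of a \oo-foncteur $\Homidt(r, C)$ or $\Homigt(e_q, C)$, and then invoke Proposition~\ref{prop:trans_lax_Homit} together with the retracts by transformation of the appendix to conclude via the bisimplicial lemma. The only cosmetic differences are the order in which you treat the two morphisms and the fact that you state directly that the Street nerve sends lax and oplax transformations to simplicial homotopies, whereas the paper packages this as Theorem~\ref{thm:nerf_retr_trans}.
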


\begin{proof}
  Commençons par montrer que le morphisme $NC \to XC$ est une équivalence
  faible diagonale. Fixons $q \ge 0$. En vertu du lemme bisimplicial, il
  suffit de montrer que le morphisme simplicial
  \[
    \Hom_{\ooCat}(\Dn{0} \otimest \On{\bullet}, C)
    \to
    \Hom_{\ooCat}(\On{q} \otimest \On{\bullet}, C)
  \]
  est une équivalence faible. Ce morphisme s'identifie par adjonction au
  morphisme
  \[
    \Hom_{\ooCat}(\On{\bullet}, \Homigt(\Dn{0}, C))
    \to
    \Hom_{\ooCat}(\On{\bullet}, \Homigt(\On{q}, C)),
  \]
  c'est-à-dire au nerf de Street du \oo-foncteur
  \[
    \Homigt(\Dn{0}, C)
    \to
    \Homigt(\On{q}, C)
  \]
  induit par le \oo-foncteur $\On{q} \to \Dn{0}$. Or, d'après la
  proposition~\ref{prop:On_contr}, ce dernier \oo-foncteur est la rétraction d'un
  rétracte par transformation lax (voir le
  paragraphe~\ref{paragr:retr_par_trans}) et, en vertu
  de la proposition~\ref{prop:trans_lax_Homit}, on obtient un rétracte par
  transformation oplax en appliquant le foncteur $\Homigt({-}, C)$. Ainsi,
  d'après le théorème~\ref{thm:nerf_retr_trans}, le nerf de Street de ce
  \oo-foncteur est une équivalence faible, ce qui entraîne que $NC \to XC$
  est bien une équivalence faible diagonale.

  Montrons maintenant que le morphisme $NSC \to XC$ est une équivalence
  faible diagonale. Fixons cette fois $p \ge 0$. Toujours en vertu
  du lemme bisimplicial, il suffit de montrer que le morphisme simplicial
  \[
    \Hom_{\ooCat}(\On{\bullet} \otimest \Deltan{p}, C)
    \to
    \Hom_{\ooCat}(\On{\bullet} \otimest \On{p}, C)
  \]
  est une équivalence faible. Par adjonction, ce morphisme
  s'identifie au morphisme
  \[
    \Hom_{\ooCat}(\On{\bullet}, \Homidt(\Deltan{p}, C))
    \to
    \Hom_{\ooCat}(\On{\bullet}, \Homidt(\On{p}, C)),
  \]
  qui n'est autre que le nerf de Street du \oo-foncteur
  \[
    \Homidt(\Deltan{p}, C) \to \Homidt(\On{p}, C)
  \]
  induit par le \oo-foncteur $\On{p} \to \Deltan{p}$. Or, d'après la
  proposition~\ref{prop:Op_Dp_retr}, ce dernier \oo-foncteur est la rétraction
  d'un rétracte par transformation oplax triviale sur les objets (voir le
  paragraphe~\ref{paragr:retr_par_trans}). Il résulte donc de la
  proposition~\ref{prop:trans_lax_Homit} que le
  \oo-foncteur obtenu en lui appliquant le foncteur $\Homigt({-}, C)$ est un
  rétracte par transformation lax. Son nerf de Street est donc une
  équivalence faible, toujours d'après le
  théorème~\ref{thm:nerf_retr_trans}, ce qui achève de montrer que $NSC \to
  XC$ est une équivalence faible diagonale.
\end{proof}

\begin{proposition}\label{prop:Street_bisimpl}
  Pour toute \oo-catégorie $C$, le nerf de Street $\NS C$ est naturellement
  faiblement équivalent à l'ensemble simplicial $\delta^\ast \NS SC$.
  Plus précisément, le zigzag du paragraphe~\ref{paragr:zigzag-1} induit un
  zigzag naturel d'équivalences faibles simpliciales entre $\NS C$ et
  $\delta^\ast\NS SC$.
\end{proposition}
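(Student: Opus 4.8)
The plan is to simply apply the diagonal functor $\delta^\ast$ to the zigzag of bisimplicial sets
\[ NC \xto{} XC \xot{} NSC \]
constructed in paragraph~\ref{paragr:zigzag-1}, and then to identify the two ends of the resulting zigzag of simplicial sets with $\NS C$ and $\delta^\ast \NS SC$. The point is that, after Proposition~\ref{prop:zigzag_base}, all the homotopical content is already available, so there is essentially nothing left to do beyond this bookkeeping.

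Concretely, I would first invoke Proposition~\ref{prop:zigzag_base}, which asserts that the two morphisms $NC \to XC$ and $NSC \to XC$ are diagonal weak equivalences; by the very definition of that notion this means that $\delta^\ast(NC \to XC)$ and $\delta^\ast(NSC \to XC)$ are simplicial weak equivalences. Hence
\[ \delta^\ast NC \xto{} \delta^\ast XC \xot{} \delta^\ast NSC \]
is a zigzag of simplicial weak equivalences, and it is natural in $C$ since the whole construction of paragraph~\ref{paragr:zigzag-1} is.

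It then remains only to identify the endpoints. On the one hand, the bisimplicial set $NC$ being constant in its second variable, one has $(\delta^\ast NC)_p = N_{p,p}C = N_pC = \NS(C)_p$, compatibly with faces and degeneracies, so $\delta^\ast NC$ is canonically isomorphic to $\NS C$. On the other hand, by construction the bisimplicial set $NSC$ is nothing but the Street nerve applied argument by argument to the simplicial object $SC$, that is, $\NS SC$, so $\delta^\ast NSC$ is by definition $\delta^\ast \NS SC$. Substituting these two identifications into the zigzag above yields the announced natural zigzag of simplicial weak equivalences between $\NS C$ and $\delta^\ast \NS SC$.

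As for the main obstacle: there is none at this stage. The genuine difficulty lies entirely in Proposition~\ref{prop:zigzag_base} and, through it, in the existence of the two retracts by transformation established in the appendix (the lax retract for $\On{q} \to \Dn{0}$ and the oplax retract, trivial on objects, for $\On{p} \to \Deltan{p}$) together with Proposition~\ref{prop:trans_lax_Homit}; once that proposition is granted, the present statement is a formal consequence, the only thing to check being the routine compatibility of the identification $\delta^\ast NC \simeq \NS C$ with the simplicial operators.
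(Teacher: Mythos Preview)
Your proof is correct and follows exactly the same approach as the paper, which simply notes that since $NC$ is constant in the second variable one has $\delta^\ast NC = \NS C$, so the result is immediate from Proposition~\ref{prop:zigzag_base}. Your write-up is more detailed but the content is identical.
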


\begin{proof}
  Puisque $\delta^\ast \NS C = \NS C$, cela résulte immédiatement de la
  proposition précédente.
\end{proof}

\begin{paragraph}\label{paragr:zigzag}
  Soit $0 < k < n$. Si $C$ est une \oo-catégorie, en appliquant le zigzag
  $N \to X \ot NS$, argument par argument, à la \oo-catégorie
  $(k-1)$\=/simpliciale~$S^{k-1}C$, on obtient un zigzag d'ensembles
  $(k+1)$-simpliciaux
  \[ NS^{k-1}C \xto{} XS^{k-1}C \xot{} NS^kC \pbox{.} \]
  L'ordre des indices simpliciaux est choisi de sorte que, en évaluant ce
  zigzag en $p_1, \dots, p_{k+1}$, on obtienne
  \[ N_{p_k}D \xto{}
  X^{}_{p_k,p_{k+1}}D \xot{} N_{p_{k+1}}S_{p_k}D \pbox{,} \]
  où
  \[ D = S_{p_{k-1}} \cdots S_{p_1}C. \]
  Considérons ces ensembles $(k+1)$-simpliciaux comme des ensembles
  $n$\=/simpliciaux constants en les $n - k - 1$ dernières variables. On
  peut ainsi composer ces zigzags et on obtient un zigzag d'ensembles
  $n$-simpliciaux, de longueur $2n-2$,
  \[ NC \to \cdots \ot NS^{n-1}C \pbox{,} \]
  où $NC$ est considéré comme constant en les $n-1$ dernières variables,
  naturel en $C$.
\end{paragraph}

\begin{proposition}
  Pour toute \oo-catégorie $C$, les morphismes $n$\=/simpliciaux du
  zigzag
  \[ NC \to \cdots \ot NS^{n-1}C \]
  du paragraphe précédent sont des équivalences faibles diagonales.
\end{proposition}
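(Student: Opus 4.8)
The plan is to reduce the statement to the bisimplicial case already settled in Proposition~\ref{prop:zigzag_base}, by a bookkeeping argument on the simplicial indices together with the multi-simplicial lemma~\ref{lemme:multi_simpl}. If $n = 1$, the zigzag of paragraph~\ref{paragr:zigzag} has length $2n-2 = 0$, so there is nothing to prove; I will therefore assume $n \ge 2$.

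First I would isolate a single morphism of the zigzag. By the construction of paragraph~\ref{paragr:zigzag}, it belongs to one of the blocks indexed by an integer $k$ with $0 < k < n$, it is an $n$-simplicial morphism constant in the last $n - k - 1$ variables, of one of the two shapes $NS^{k-1}C \to XS^{k-1}C$ or $NS^{k}C \to XS^{k-1}C$, and in the first $k+1$ variables it is obtained by applying the zigzag $N \to X \ot NS$ of paragraph~\ref{paragr:zigzag-1}, argument by argument, to the $(k-1)$-simplicial $\infty$-category $S^{k-1}C$. The key point is the following identification. Fix a family of integers $(p_i)_{i \in I}$ indexed by $I = \{1, \dots, k-1\} \cup \{k+2, \dots, n\}$. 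Then fixing, for each $i \in I$, the $i$-th variable to $p_i$ (fixing the dummy variables in $\{k+2, \dots, n\}$ does nothing, the morphism being constant there) turns our $n$-simplicial morphism into a \emph{bisimplicial} morphism in the variables of indices $k$ and $k+1$, and this bisimplicial morphism is precisely the corresponding morphism of the zigzag $ND \to XD \ot NSD$ of paragraph~\ref{paragr:zigzag-1} for the $\infty$-category $D = S_{p_{k-1}} \cdots S_{p_1} C$: indeed $NS^{k-1}C$ evaluated at $(p_1, \dots, p_{k+1})$ is $N_{p_k}(S_{p_{k-1}} \cdots S_{p_1}C) = N_{p_k}D$, and likewise $XS^{k-1}C$ gives $X_{p_k, p_{k+1}}D$ and $NS^{k}C$ gives $N_{p_{k+1}}S_{p_k}D$, compatibly with all structure maps, by naturality of the zigzag of paragraph~\ref{paragr:zigzag-1} in the $\infty$-category to which it is applied.

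By Proposition~\ref{prop:zigzag_base}, applied to $D$ in place of $C$, this bisimplicial morphism is a diagonal weak equivalence. Since $I$ has cardinality $n - 2 < n$, the multi-simplicial lemma~\ref{lemme:multi_simpl} now applies and yields that the original $n$-simplicial morphism is a diagonal weak equivalence. The only case this misses is $n = 2$, where $I$ is empty and the lemma degenerates; but then every morphism of the zigzag is already a bisimplicial morphism constant in no variable, so Proposition~\ref{prop:zigzag_base} applies to it directly. Since this covers every morphism of the zigzag, we are done.

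I do not expect a genuine obstacle; the one thing requiring care is the identification in the second paragraph — that restricting the $n$-simplicial sets $NS^{j}C$ and $XS^{j}C$ along a fixed tuple of indices recovers the bisimplicial sets $ND$, $XD$, $NSD$ for $D = S_{p_{k-1}} \cdots S_{p_1}C$, and that the morphisms match — which follows formally from the definitions of $S$ and of its iterates (paragraphs~\ref{paragr:def_SC} and~\ref{paragr:def_Sn}) and from the naturality built into paragraph~\ref{paragr:zigzag-1}.
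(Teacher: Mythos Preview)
Your argument is correct and follows essentially the same route as the paper: reduce each block of the zigzag to the bisimplicial zigzag $ND \to XD \leftarrow NSD$ for $D = S_{p_{k-1}}\cdots S_{p_1}C$ by fixing the extraneous simplicial indices, invoke Proposition~\ref{prop:zigzag_base}, and conclude via the multi-simplicial lemma. The only cosmetic difference is that the paper works directly at the $(k+1)$-simplicial level (fixing only the first $k-1$ variables and implicitly using that a diagonal weak equivalence stays one after making it constant in additional directions), whereas you carry the dummy variables $k+2,\dots,n$ along and fix them too; this forces you to single out the case $n=2$ where $I$ is empty, which the paper's formulation avoids.
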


\begin{proof}
  Soit $k$ tel que $0 < k < n$. Il suffit de voir que les morphismes
  $(k+1)$-simpliciaux
  \[ NS^{k-1}C \xto{} XS^{k-1}C \xot{} NS^kC \]
  sont des équivalences faibles diagonales. Or, en évaluant ce zigzag en les
  $k-1$ premières variables en $p_1, \dots, p_{k-1}$, on obtient le zigzag
  bisimplicial
  \[ ND \xto{} XD \xot{} NSD \pbox{,} \]
  où
  \[ D = S_{p_{k-1}}\cdots S_{p_1}C. \]
  D'après la proposition~\ref{prop:zigzag_base}, les morphismes de ce
  dernier zigzag sont des équivalences faibles diagonales et on obtient
  donc le résultat en vertu du lemme multi-simplicial
  (lemme~\ref{lemme:multi_simpl}).
\end{proof}

\begin{theorem}\label{thm:comp_Street_n-simpl}
  Le nerf de Street, restreint aux $n$-catégories, et le nerf $n$\=/simplicial
  sont naturellement faiblement équivalents. Plus précisément, si $C$ est
  une $n$\=/catégorie, le zigzag du paragraphe~\ref{paragr:zigzag} induit un
  zigzag naturel d'équivalences faibles simpliciales entre $NC$ et
  $\delta^\ast\Nsn{n}C$.
\end{theorem}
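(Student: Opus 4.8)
The plan is to derive the theorem essentially formally from the proposition just proved, by applying the functor $\delta^\ast\colon \pref{\cDelta^n}\to\pref{\cDelta}$ to the zigzag of $n$-simplicial sets
\[ NC \to \cdots \ot NS^{n-1}C \]
of paragraph~\ref{paragr:zigzag}. By the preceding proposition, each morphism in this zigzag is a diagonal weak equivalence, which, by the very definition of that notion recalled at the beginning of the section, means exactly that its image under $\delta^\ast$ is a simplicial weak homotopy equivalence. Since both the zigzag of paragraph~\ref{paragr:zigzag} and the functor $\delta^\ast$ are natural in $C$, what we obtain is a natural zigzag of simplicial weak equivalences; it then only remains to identify its two endpoints.

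For the left-hand endpoint, recall that in paragraph~\ref{paragr:zigzag} the term $NC$ is regarded as the $n$-simplicial set which is constant in its last $n-1$ variables; hence $(\delta^\ast NC)_p = (NC)_{p,\dots,p} = N_p C$, that is, $\delta^\ast NC = NC$ as simplicial sets. For the right-hand endpoint, this is where the hypothesis that $C$ is an $n$-category is used. Iterating Corollary~\ref{coro:desc_SC}, every component $S_{p_{n-1}}\cdots S_{p_1}C$ of the $(n-1)$-simplicial object $S^{n-1}C$ is a $1$-category; on $1$-categories the Street nerve coincides with the usual nerve, which by the remark closing paragraph~\ref{paragr:def_Sn} coincides with $S$. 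Since $NS^{n-1}C$ is, by construction, obtained by applying the Street nerve componentwise to $S^{n-1}C$ (the nerve direction becoming the last simplicial variable), it follows that $NS^{n-1}C = S^nC$, and hence, by Proposition~\ref{prop:lien_N_Nsn}, that $NS^{n-1}C \simeq \Nsn{n}C$ naturally in $C$. Therefore $\delta^\ast NS^{n-1}C \simeq \delta^\ast\Nsn{n}C$, the right-hand endpoint claimed in the statement, and the resulting natural zigzag of simplicial weak equivalences runs from $NC$ to $\delta^\ast\Nsn{n}C$ as asserted.

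I do not expect any real obstacle at this point: the entire homotopical content has already been absorbed into the preceding proposition (and, through it, into Proposition~\ref{prop:zigzag_base} and the retract-by-transformation results invoked there). The only delicate point is purely a matter of bookkeeping — one must check that the orderings of the simplicial variables adopted in paragraph~\ref{paragr:zigzag}, in paragraph~\ref{paragr:def_Sn}, and in Proposition~\ref{prop:lien_N_Nsn} are mutually compatible, so that the identification $NS^{n-1}C = S^nC$ genuinely respects the $n$ simplicial directions in the correct order, and that the term $NC$ sits as constant in precisely the last $n-1$ variables, so that passing to the diagonal really returns the Street nerve $NC$.
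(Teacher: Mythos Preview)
Your proof is correct and follows essentially the same approach as the paper: apply $\delta^\ast$ to the zigzag of paragraph~\ref{paragr:zigzag}, invoke the preceding proposition to see that each map becomes a simplicial weak equivalence, then identify the two endpoints via $\delta^\ast NC = NC$ on the left and via Corollary~\ref{coro:desc_SC}, paragraph~\ref{paragr:def_Sn}, and Proposition~\ref{prop:lien_N_Nsn} on the right. Your bookkeeping remark about variable orderings is fair but not an obstruction; the conventions chosen in the paper are compatible.
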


\begin{proof}
  En vertu de la proposition précédente, le zigzag du
  paragraphe~\ref{paragr:zigzag} induit un zigzag d'équivalences faibles
  simpliciales entre $\delta^\ast NC = NC$ et~$\delta^\ast NS^{n-1}C$. Or, puisque
  $C$ est une $n$-catégorie, il résulte du corollaire~\ref{coro:desc_SC}
  que $S^{n-1}C$ est un objet $(n-1)$-simplicial en $1$-catégories et on a
  donc $NS^{n-1}C = S^n C$ d'après le paragraphe~\ref{paragr:def_Sn}. Ainsi,
  en vertu de la proposition
  \ref{prop:lien_N_Nsn}, on a
  \[ \delta^\ast NS^{n-1}C \simeq \delta^\ast S^nC \simeq \delta^\ast \Nsn{n}C, \]
  d'où le résultat.
\end{proof}

\section{Techniques de comparaison de nerfs}

\begin{paragraph}\label{paragr:comp}
  Soient
  \[ i : A \to \ooCat \quadet j : B \to \ooCat \]
  deux foncteurs de source des petites catégories.
  À ces données, on associe deux foncteurs « nerf »
  \[
    \begin{split}
      N_i : \ooCat & \to \pref{A} \\
     X \quad & \mapsto (a \mapsto \Hom_{\ooCat}(i(a), X)),
    \end{split}
    % TOCHECK
    \quad
    \begin{split}
      N_j : \ooCat & \to \pref{B} \\
     X \quad & \mapsto (b \mapsto \Hom_{\ooCat}(j(b), X)).
    \end{split}
  \]
  Que signifie comparer ces deux foncteurs nerf ?

  Notons $\Hot$ la catégorie homotopique des espaces, c'est-à-dire la
  localisée de la catégorie des ensembles simpliciaux par les équivalences
  d'homotopie faibles. Pour toute petite catégorie $C$, on dispose d'un
  foncteur canonique
  \[ k_C : \pref{C} \to \Hot. \]
  Ce foncteur est défini comme le composé
  \[ \pref{C} \xto{i_C} \Cat \xto{N} \pref{\cDelta} \xto{p} \Hot \pbox{,} \]
  où $i_C$ est le foncteur catégorie des éléments, dont nous rappellerons
  la définition plus loin, $N$ est le nerf usuel et $p$ est le
  foncteur canonique vers la localisée.
  Comparer les foncteurs nerf $N_i$ et $N_j$, c'est montrer que les
  foncteurs composés
  \[
    \ooCat \xto{N_i} \pref{A} \xto{k_A} \Hot
    \quadet
    \ooCat \xto{N_j} \pref{B} \xto{k_B} \Hot
  \]
  sont isomorphes.

  Notons que si cette condition est satisfaite, alors les
  classes~$\W_i$ et~$\W_j$ des « équivalences faibles » de $\ooCat$ définies
  par ces foncteurs nerf, c'est-à-dire des \oo-foncteurs envoyés sur un
  isomorphisme par $k_AN_i$ et $k_BN_j$ respectivement, coïncident.
\end{paragraph}

Le but de cette section est de donner des critères de comparaison de
foncteurs nerf basés sur la théorie de l'homotopie de Grothendieck, et en
particulier sa théorie des structures d'asphéricité \cite[chapitre IV,
sections 72--78]{GrothPS}.

\medskip

Commençons par introduire les notions de base de cette théorie.

\begin{paragraph}
  Soit $u : A \to B$ un foncteur et soit $b$ un objet de $B$. On notera
  $\tr{A}{b}$ la catégorie dont les objets sont les couples $(a, f : u(a)
  \to b)$, où $a$ est un objet de $A$ et $f : u(a) \to b$ un morphisme de $B$, et dont les
  morphismes de $(a, f)$ vers $(a', f')$ sont les morphismes $g : a \to a'$
  de $A$ rendant commutatif le triangle évident, autrement dit, vérifiant
  $f'u(g) = f$.
\end{paragraph}

\begin{paragraph}
  Soit $A$ une petite catégorie. Si $F$ est un préfaisceau sur $A$, en
  appliquant la construction du paragraphe précédent au foncteur de Yoneda
  $A \hookto \pref{A}$, qu'on considérera toujours comme une inclusion, on
  obtient une catégorie $\tr{A}{F}$. C'est la \ndef{catégorie des éléments
  de $F$}. On obtient ainsi un foncteur
  \[
    \begin{split}
      i_A : \pref{A} & \to \Cat \\
      F & \mapsto \tr{A}{F}.
    \end{split}
  \]
  Explicitement, les objets de $\tr{A}{F}$ sont les couples $(a, f)$, où $a$
  est un objet de $A$ et $f : a \to F$ est un morphisme de préfaisceaux sur
  $A$, qu'on peut identifier par le lemme de Yoneda à un élément de $F(a)$.
\end{paragraph}

\begin{paragraph}\label{paragr:def_Thom}
  On dit qu'un foncteur $u : A \to B$ entre petites catégories est une
  \ndef{équivalence de Thomason} si son nerf $Nu : NA \to NB$ est une
  équivalence d'homotopie faible simpliciale.

  Une petite catégorie $A$ est dite \ndef{asphérique} si le foncteur de $A$
  vers la catégorie ponctuelle est une équivalence de Thomason ou, ce qui
  revient au même, si son nerf est un ensemble simplicial faiblement
  contractile.

  Un préfaisceau $F$ sur une petite catégorie $A$ est dit \ndef{asphérique}
  si sa catégorie des éléments $\tr{A}{F}$ est asphérique.

  Enfin, on dit qu'un morphisme de préfaisceaux $\phi : F \to G$ sur une petite
  catégorie~$A$ est une \ndef{équivalence faible} si le foncteur $i_A(\phi) :
  \tr{A}{F} \to \tr{A}{G}$ est une équivalence de Thomason.
\end{paragraph}

\begin{remark}
  La classe des équivalences d'homotopie faibles simpliciales étant
  fortement saturée (au sens où un morphisme est dans cette classe si et
  seulement si son image dans la localisée correspondante est inversible),
  il en est de même des classes des équivalences de Thomason et des
  équivalences faibles de préfaisceaux sur une petite catégorie $A$.
\end{remark}

\begin{remark}
  Les foncteurs entre petites catégories qui sont des adjoints, à gauche
  ou à droite, sont des équivalences de Thomason. Cela résulte immédiatement
  du fait qu'une transformation naturelle induit \forlang{via} le nerf une
  homotopie simpliciale. On en déduit qu'une catégorie admettant un objet
  initial ou final est asphérique et, par suite, que tout préfaisceau
  représentable est asphérique.
\end{remark}

\begin{remark}
  Si $A$ est une petite catégorie \emph{asphérique}, alors un
  préfaisceau~$F$ sur~$A$ est asphérique si et seulement si le morphisme $F
  \to e_{\pref{A}}$, où $e_{\pref{A}}$ désigne le préfaisceau final sur $A$,
  est une équivalence faible.
\end{remark}

\begin{remark}\label{rem:Illusie-Quillen}
  Un morphisme d'ensembles simpliciaux est une équivalence faible au sens du
  paragraphe~\ref{paragr:def_Thom} si et seulement si il est une équivalence
  d'homotopie faible simpliciale. Cela résulte du fait que, pour tout
  ensemble simplicial $X$, il existe un zigzag d'équivalences d'homotopie
  faibles entre $N(\tr{\cDelta}{X})$ et~$X$ (voir \cite[chapitre~VI,
  théorème~3.3.(ii)]{Illusie}). En particulier, un ensemble simplicial est
  asphérique si et seulement si il est faiblement contractile au sens
  classique.
\end{remark}

\begin{paragraph}
  Un foncteur $u : A \to B$ entre petites catégories est dit
  \ndef{asphérique} si, pour tout objet $b$ de $B$, la catégorie $\tr{A}{b}$
  est asphérique. Le théorème A de Quillen affirme précisément qu'un
  foncteur asphérique est une équivalence de Thomason.
\end{paragraph}

\begin{paragraph}\label{paragr:def_lambda}
  Soit $u : A \to B$ un foncteur entre petites catégories. Notons $u^\ast
  : \pref{B} \to \pref{A}$ le foncteur d'image inverse par $u$, c'est-à-dire de
  précomposition par $u$. Pour tout préfaisceau $F$ sur $B$, on dispose d'un
  foncteur
  \[
    \begin{split}
      \tr{A}{u^\ast(F)} & \to \tr{B}{F} \\
      (a, f) & \mapsto (u(a), f),
    \end{split}
  \]
  où $f$ est tantôt vu comme un morphisme $f : a \to u^\ast(F)$, tantôt
  comme un morphisme $f : u(a) \to F$, ces deux morphismes correspondant en vertu du
  lemme de Yoneda à un élément de $F(u(a))$. Ce foncteur est naturel en $F$
  et on dispose donc d'une transformation naturelle
  \[
    \shorthandoff{;}
    \xymatrix@C=2pc{
      \pref{B} \ar[rr]^{u^\ast}_(0.90){}="2" \ar[dr]_{i_B}_(0.55){}="1" & & \pref{A} \ar[dl]^{i_A} \\
      & \Cat
      \ar@{}"1";[ur]_(0.20){}="11"_(0.60){}="22"
      \ar@2"22";"11"_{\lambda_u}
      & \bpbox{.}
    }
  \]
\end{paragraph}

\begin{proposition}[Grothendieck]\label{prop:caract_asph}
  Un foncteur $u : A \to B$ entre petites catégories est asphérique si et
  seulement si la transformation naturelle $\lambda_u$ est une équivalence de
  Thomason argument par argument.
\end{proposition}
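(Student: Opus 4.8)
Le plan est de traiter séparément les deux implications, la réciproque étant purement formelle et l'implication directe reposant sur le théorème A de Quillen.

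Pour montrer que la condition sur $\lambda_u$ entraîne l'asphéricité de $u$, on évaluerait $\lambda_u$ sur les préfaisceaux représentables. Fixons un objet $b$ de $B$ et prenons $F = \Hom_B({-}, b)$. D'une part, la catégorie des éléments $\tr{B}{F}$ admet $(b, \id{b})$ pour objet final, donc est asphérique. D'autre part, par définition de l'image inverse $u^\ast$, la catégorie $\tr{A}{u^\ast(F)}$ s'identifie à la catégorie $\tr{A}{b}$, le foncteur $\lambda_{u, F}$ s'identifiant alors au foncteur canonique $\tr{A}{b} \to \tr{B}{F}$. Ce dernier étant par hypothèse une équivalence de Thomason de but asphérique, sa source $\tr{A}{b}$ est asphérique ; l'objet $b$ étant arbitraire, le foncteur $u$ est asphérique.

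Réciproquement, supposons $u$ asphérique et fixons un préfaisceau $F$ sur $B$ ; il s'agit de montrer que le foncteur $\lambda_{u, F} : \tr{A}{u^\ast(F)} \to \tr{B}{F}$ est une équivalence de Thomason, et on le déduirait du théorème A de Quillen en vérifiant que ce foncteur est asphérique. Le cœur de l'argument consiste à calculer, pour tout objet $(b, s)$ de $\tr{B}{F}$ (où $s$ est un élément de $F(b)$), la catégorie des objets de $\tr{A}{u^\ast(F)}$ au-dessus de $(b, s)$ relativement à $\lambda_{u, F}$. En déroulant les définitions et en les traduisant au moyen du lemme de Yoneda, on constaterait qu'un objet de cette catégorie revient à la donnée d'un objet $a$ de $A$ et d'un morphisme $g : u(a) \to b$ de $B$ (l'élément de $F(u(a))$ en jeu étant alors nécessairement $F(g)(s)$), et qu'un morphisme de $(a, g)$ vers $(a', g')$ revient à la donnée d'un morphisme $h : a \to a'$ de $A$ vérifiant $g' \circ u(h) = g$, la condition de compatibilité portant sur les éléments de $F$ étant dans ce cas automatiquement satisfaite. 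Autrement dit, cette catégorie est canoniquement isomorphe à $\tr{A}{b}$, et ce indépendamment de $s$. L'asphéricité de $u$ fournit donc celle de toutes ces catégories, d'où l'asphéricité de $\lambda_{u, F}$, et le théorème A permet de conclure ; comme $F$ est arbitraire, $\lambda_u$ est une équivalence de Thomason argument par argument.

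La principale difficulté, somme toute modeste, est précisément cette identification de la catégorie des objets au-dessus de $(b, s)$ avec $\tr{A}{b}$ : il s'agit d'une vérification un peu fastidieuse mais sans subtilité, dont le ressort est qu'une fois le morphisme $g : u(a) \to b$ fixé, l'élément correspondant de $F$ est déterminé et toutes les conditions de compatibilité deviennent redondantes. Tout le reste de l'argument (théorème A de Quillen, stabilité par composition des équivalences de Thomason, asphéricité d'une catégorie munie d'un objet final) est entièrement standard.
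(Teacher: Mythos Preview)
Your argument is correct and is the standard one. The paper itself does not give a proof but simply cites \cite[proposition 1.2.9]{Maltsi}; your proposal supplies precisely the argument one finds there, namely the evaluation of $\lambda_u$ on representables for one direction and, for the other, the identification of the comma category of $\lambda_{u,F}$ over an object $(b,s)$ with $\tr{A}{b}$ followed by an appeal to Quillen's Theorem~A.
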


\begin{proof}
  C'est l'équivalence entre $(a)$ et $(c)$ de \cite[proposition
  1.2.9]{Maltsi}.
\end{proof}

Dans la suite de la section, on fixe une catégorie $\M$, que le lecteur peut
penser être $\ooCat$ ou $\nCat{n}$. Si $A$ est une petite catégorie et $i :
A \to \M$ un foncteur, on notera $N_i : \M \to \pref{A}$ le foncteur « nerf
» associé, défini comme au paragraphe \ref{paragr:comp}.

\begin{proposition}[Grothendieck]\label{prop:comp_nerf_asp}
  Soit
  \[
    \xymatrix@C=1.5pc{
      A \ar[dr]_i \ar[rr]^u & & B \ar[dl]^j \\
      & \M
    }
  \]
  un triangle commutatif de foncteurs, où $A$ et $B$ sont des petites
  catégories. Si $u$ est asphérique, alors les
  foncteurs composés
  \[ \M \xto{N_i} \pref{A} \xto{k_A} \Hot \quadet
  \M \xto{N_j} \pref{B} \xto{k_B} \Hot
  \]
  sont isomorphes. Plus précisément, la transformation naturelle $\lambda_u$
  induit une transformation naturelle entre
  \[
     \M \xto{N_i} \pref{A} \xto{i_A} \Cat
     \quadet
     \M \xto{N_j} \pref{B} \xto{i_B} \Cat
  \]
  qui est une équivalence de Thomason argument par argument.
\end{proposition}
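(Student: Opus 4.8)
Le plan est de ramener l'énoncé à la proposition~\ref{prop:caract_asph}, en s'appuyant sur l'observation que le foncteur nerf $N_i$ se factorise à travers $N_j$. En effet, pour tout objet $X$ de $\M$ et tout objet $a$ de $A$, la commutativité du triangle $i = ju$ fournit des bijections
\[
  N_i(X)(a) = \Hom_\M(i(a), X) = \Hom_\M(j(u(a)), X) = N_j(X)(u(a)) = \bigl(u^\ast N_j(X)\bigr)(a),
\]
naturelles en $a$ et en $X$, d'où une égalité de foncteurs $N_i = u^\ast \circ N_j$.

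Je composerais ensuite à droite avec $N_j$ la transformation naturelle $\lambda_u : i_A \circ u^\ast \Rightarrow i_B$ introduite au paragraphe~\ref{paragr:def_lambda}, ce qui fournit une transformation naturelle
\[
  \lambda_u N_j : i_A \circ N_i = i_A \circ u^\ast \circ N_j \Longrightarrow i_B \circ N_j
\]
entre les deux foncteurs $\M \to \Cat$ de l'assertion raffinée de l'énoncé. Sa composante en un objet $X$ de $\M$ est exactement la composante de $\lambda_u$ en le préfaisceau $N_j(X)$, c'est-à-dire le foncteur $\tr{A}{u^\ast N_j(X)} \to \tr{B}{N_j(X)}$. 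Comme $u$ est asphérique, la proposition~\ref{prop:caract_asph} garantit que $\lambda_u$ est une équivalence de Thomason argument par argument ; cette composante est donc une équivalence de Thomason, ce qui établit l'assertion raffinée.

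Pour en déduire l'isomorphisme annoncé des foncteurs à valeurs dans $\Hot$, je composerais $\lambda_u N_j$ avec le nerf usuel $N : \Cat \to \pref{\cDelta}$, puis avec le foncteur de localisation $p : \pref{\cDelta} \to \Hot$. Par définition même d'une équivalence de Thomason, $N$ envoie chaque composante de $\lambda_u N_j$ sur une équivalence d'homotopie faible simpliciale, que $p$ transforme en un isomorphisme de $\Hot$. Puisque $k_A = p \circ N \circ i_A$ et $k_B = p \circ N \circ i_B$, on obtient ainsi une transformation naturelle $k_A N_i \Rightarrow k_B N_j$ dont toutes les composantes sont des isomorphismes ; c'est donc un isomorphisme de foncteurs, d'où le résultat.

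Il n'y a pas d'obstacle substantiel dans cette démonstration : tout le contenu non formel --- en substance le théorème A de Quillen, via la caractérisation des foncteurs asphériques due à Grothendieck --- est déjà concentré dans la proposition~\ref{prop:caract_asph}, le reste n'étant qu'une manipulation formelle de transformations naturelles. Le seul point demandant un peu de soin est la vérification de l'égalité $N_i = u^\ast \circ N_j$ et, dans la foulée, la direction correcte de la transformation naturelle $\lambda_u$.
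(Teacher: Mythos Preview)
Ta démonstration est correcte et suit exactement l'approche du papier : la factorisation $N_i = u^\ast N_j$, puis l'application de la proposition~\ref{prop:caract_asph} pour conclure que $\lambda_u N_j$ est une équivalence de Thomason argument par argument. Tu es simplement un peu plus explicite que le papier sur le passage final à $\Hot$ via $p \circ N$.
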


\begin{proof}
  On dispose d'un diagramme
  \[
    \shorthandoff{;}
    \xymatrix@C=2pc{
      & \M \ar[dl]_{N_j} \ar[dr]^{N_i} \\
      \pref{B} \ar[rr]^{u^\ast}_(0.90){}="2" \ar[dr]_{i_B}_(0.55){}="1" & & \pref{A} \ar[dl]^{i_A} \\
      & \Cat
      \ar@{}"1";[ur]_(0.20){}="11"_(0.60){}="22"
      \ar@2"22";"11"_{\lambda_u}
    }
  \]
  dont le triangle supérieur est commutatif. En effet, si $X$ est dans $\M$
  et $a$ est dans $A$, on a
  \[ u^\ast N_j(X)_a = N_j(X)_{u(a)} = \Hom_\M(ju(a), X) = \Hom_\M(i(a), X)
  = N_i(X)_a. \]
  Puisque $u$ est asphérique, la transformation naturelle $\lambda_u$ est
  une équivalence de Thomason argument par argument et l'assertion suit.
\end{proof}

\begin{proposition}\label{prop:comp_nerf}
  Soient $i : A \to \M$ et $j : B \to \M$ deux foncteurs de sources des
  petites catégories. On suppose que pour tout objet $a$ de $A$ et tout
  objet $b$ de $B$, les préfaisceaux $N_ii(a)$ et $N_ij(b)$ sont des
  préfaisceaux asphériques de $\pref{A}$ et les préfaisceaux $N_ji(a)$ et
  $N_jj(b)$ sont des préfaisceaux asphériques de $\pref{B}$. Alors, les
  foncteurs composés
  \[ \M \xto{N_i} \pref{A} \xto{k_A} \Hot \quadet
  \M \xto{N_j} \pref{B} \xto{k_B} \Hot
  \]
  sont isomorphes.
  Plus précisément, il existe un zigzag de transformations naturelles de la
  forme
    \[ i_AN_i \tod \bullet \otd i_BN_j \]
  qui sont des équivalences de Thomason argument par argument.
\end{proposition}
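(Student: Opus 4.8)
Le plan est de factoriser les deux foncteurs $i$ et $j$ à travers une même petite catégorie plongée dans $\M$, puis d'invoquer deux fois la proposition~\ref{prop:comp_nerf_asp}. Concrètement, on introduit la sous-catégorie \emph{pleine} $C$ de $\M$ dont l'ensemble des objets est $\{i(a) \mid a \in \Ob A\} \cup \{j(b) \mid b \in \Ob B\}$. Comme $A$ et $B$ sont petites, $C$ est une petite catégorie, et son foncteur d'inclusion $\ell : C \hookto \M$ est pleinement fidèle. Par construction, $i$ et $j$ se factorisent à travers $\ell$ : on dispose de foncteurs $i' : A \to C$ et $j' : B \to C$ tels que $\ell i' = i$ et $\ell j' = j$, d'où les égalités $N_i = (i')^\ast N_\ell$ et $N_j = (j')^\ast N_\ell$, entre foncteurs de $\M$ vers $\pref{A}$ et vers $\pref{B}$ respectivement.

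La clé est de vérifier que $i'$ et $j'$ sont asphériques. Montrons-le pour $i'$. Soit $c$ un objet de $C$ ; par définition de $C$, on a $c = i(a_0)$ pour un objet $a_0$ de $A$, ou bien $c = j(b_0)$ pour un objet $b_0$ de $B$. Puisque $C$ est pleine dans $\M$, la catégorie $\tr{A}{c}$ associée à $i'$ a pour objets les couples $(a, f)$ avec $a$ objet de $A$ et $f : i(a) \to c$ morphisme de $\M$, et pour morphismes de $(a, f)$ vers $(a', f')$ les morphismes $g : a \to a'$ de $A$ tels que $f' i(g) = f$ ; autrement dit, $\tr{A}{c}$ s'identifie à la catégorie des éléments $\tr{A}{N_i(i(a_0))}$ dans le premier cas, et à $\tr{A}{N_i(j(b_0))}$ dans le second. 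Or, par hypothèse, les préfaisceaux $N_i(i(a_0))$ et $N_i(j(b_0))$ sur $A$ sont asphériques ; la catégorie $\tr{A}{c}$ est donc asphérique dans les deux cas, ce qui prouve que $i'$ est asphérique. Le même raisonnement, utilisant cette fois l'asphéricité des préfaisceaux $N_j(i(a))$ et $N_j(j(b))$ sur $B$, montre que $j'$ est asphérique.

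On conclut en appliquant la proposition~\ref{prop:comp_nerf_asp}. Appliquée au triangle commutatif de foncteurs formé par $i' : A \to C$, $i : A \to \M$ et $\ell : C \to \M$, elle fournit, puisque $i'$ est asphérique, une transformation naturelle entre $\M \xto{N_i} \pref{A} \xto{i_A} \Cat$ et $\M \xto{N_\ell} \pref{C} \xto{i_C} \Cat$ qui est argument par argument une équivalence de Thomason ; appliquée de même au triangle formé par $j'$, $j$ et $\ell$, elle fournit une équivalence de Thomason argument par argument entre $i_B N_j$ et $i_C N_\ell$. On obtient de la sorte un zigzag de transformations naturelles
\[ i_A N_i \tod i_C N_\ell \otd i_B N_j, \]
argument par argument des équivalences de Thomason, reliant $i_A N_i$ et $i_B N_j$ au travers du foncteur $\bullet = i_C N_\ell$. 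En composant avec le nerf usuel $N : \Cat \to \pref{\cDelta}$ et le foncteur de localisation $p : \pref{\cDelta} \to \Hot$, chacune des deux flèches du zigzag devient un isomorphisme de $\Hot$, l'image par $pN$ d'une équivalence de Thomason étant un isomorphisme ; on en déduit que $k_A N_i = pN i_A N_i$ et $k_B N_j = pN i_B N_j$ sont isomorphes.

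Le seul ingrédient non formel est le choix de la catégorie intermédiaire $C$ : le point à voir est qu'il faut réunir les images de $i$ et de $j$ en une \emph{seule} sous-catégorie, de façon que $i'$ et $j'$ aboutissent au même $C$, ce qui permet d'utiliser un unique intermédiaire ; une fois ce choix fait, l'asphéricité de $i'$ et de $j'$ est une simple reformulation des quatre hypothèses sur $N_i i(a)$, $N_i j(b)$, $N_j i(a)$ et $N_j j(b)$, et il n'y a aucune difficulté véritable. On prendra toutefois garde à prendre $C$ \emph{pleine} dans $\M$, condition indispensable à l'identification des catégories $\tr{A}{c}$ aux catégories des éléments ci-dessus.
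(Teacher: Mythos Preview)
Your proof is correct and follows essentially the same approach as the paper: introduce the full subcategory $C$ of $\M$ on the union of the images of $i$ and $j$, check that the induced $i' : A \to C$ and $j' : B \to C$ are aspherical by identifying $\tr{A}{c}$ with the category of elements of $N_i(c)$ (and similarly for $j'$), then apply proposition~\ref{prop:comp_nerf_asp} twice. The only cosmetic discrepancy is the orientation of the zigzag: the construction via $\lambda_{i'}$ and $\lambda_{j'}$ naturally yields $i_A N_i \tod i_C N_\ell \otd i_B N_j$, as you write, rather than the form $i_A N_i \otd \bullet \tod i_B N_j$ displayed in the statement, but this is immaterial for the conclusion.
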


\begin{proof}
  Soit $C$ la sous-catégorie pleine de $\M$ d'ensemble d'objets
  \[ \Ob(C) = \{i(a) \mid a \in \Ob(A)\} \cup \{j(b) \mid b \in \Ob(B)\}. \]
  Par définition, les foncteurs $i$ et $j$ se factorisent par $C$ et on
  dispose d'un diagramme commutatif
  \[
    \xymatrix{
      A \ar[dr]_i \ar[r]^{i'} & C \ar@{^{(}->}[d] & B \ar[l]_{j'} \ar[ld]^j \\
      & \M & \bpbox{.}
    }
  \]
  En vertu de la proposition précédente, il suffit de montrer que les
  foncteurs $i'$ et $j'$ sont asphériques et, par symétrie, il suffit de
  traiter le cas de $i'$. Soit donc $c$ un objet de $C$. Un objet de
  $\tr{A}{c}$ est un couple $(a, i'(a) \to c)$, avec $a$ un objet de $A$ et $i'(a)
  \to c$ un morphisme de $C$, c'est-à-dire un morphisme $i(a) \to c$ de $C$ ou, ce
  qui revient au même, un morphisme $a \to N_i(c)$ de préfaisceaux sur $A$.
  Autrement dit, on a $\tr{A}{c} \simeq \tr{A}{N_i(c)}$.  Ainsi, le foncteur
  $i'$ est asphérique si et seulement si, pour tout objet
  $c$ de $C$, le préfaisceau $N_i(c)$ sur $A$ est asphérique. Les objets de
  $c$ étant de la forme $i(a)$ ou $j(b)$, pour $a$ dans $A$ et $b$ dans $B$,
  l'assertion résulte de l'hypothèse que $N_ii(a)$ et $N_ij(b)$ sont
  asphériques.
\end{proof}

\begin{corollary}[Grothendieck]\label{coro:comp_nerf}
  Soient $i : A \to \M$ et $j : B \to \M$ deux foncteurs de sources des
  petites catégories. On suppose que pour tout objet $a$ de $A$ et tout
  objet $b$ de $B$, les préfaisceaux $N_i i(a)$ et $N_jj(b)$ sont
  asphériques. Alors les conditions suivantes sont équivalentes :
  \begin{enumerate}
    \item\label{item:a} les foncteurs $k_AN_i$ et $k_BN_j$ sont isomorphes ;
    \item\label{item:c} il existe un zigzag de transformations naturelles de la forme
    \[ i_AN_i \tod \bullet \otd i_BN_j \]
    qui sont des équivalences de Thomason argument par argument;
\end{enumerate}
  et elles entraînent la condition suivante :
  \begin{enumerate}[resume]
    \item\label{item:b} les foncteurs nerf $N_i$ et $N_j$ définissent les
    mêmes équivalences faibles sur $\M$ au sens où, pour tout morphisme $f$
    de $\M$, le morphisme $N_i(f)$ est une équivalence faible de
    préfaisceaux si et seulement si $N_j(f)$ en est une.
  \end{enumerate}
  De plus, si $A$ et $B$ sont asphériques et $\M$ possède un
  objet final, alors ces trois conditions sont équivalentes.
\end{corollary}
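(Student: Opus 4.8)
The plan is to prove that \ref{item:c} implies \ref{item:a}, that \ref{item:a} implies both \ref{item:b} and \ref{item:c}, and finally, under the two extra hypotheses, that \ref{item:b} implies \ref{item:c}. Everything rests on two elementary observations. Since $k_C = p N i_C$ for every small category $C$, since $N$ carries Thomason equivalences to simplicial weak equivalences by definition, and since $p$ inverts the latter (which moreover form a saturated class), a presheaf $F$ on $C$ is aspherical if and only if $k_C(F)$ is a terminal object of $\Hot$, and a morphism $\phi$ of $\pref{C}$ is a weak equivalence if and only if $k_C(\phi)$ is an isomorphism of $\Hot$. Moreover, applying $pN : \Cat \to \Hot$ argument by argument to a zigzag $i_A N_i \otd \bullet \tod i_B N_j$ of transformations that are Thomason equivalences argument by argument produces a zigzag of natural \emph{isomorphisms} between $k_A N_i$ and $k_B N_j$; this already gives \ref{item:c} $\Rightarrow$ \ref{item:a}.

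For \ref{item:a} $\Rightarrow$ \ref{item:b}: given a natural isomorphism $k_A N_i \simeq k_B N_j$ and a morphism $f$ of $\M$, the morphism $k_A N_i(f)$ is invertible in $\Hot$ if and only if $k_B N_j(f)$ is, hence by the first observation $N_i(f)$ is a weak equivalence of presheaves if and only if $N_j(f)$ is. This uses no hypothesis, so \ref{item:a} and \ref{item:c} both entail \ref{item:b}.

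The heart of the matter is \ref{item:a} $\Rightarrow$ \ref{item:c}: I would deduce from \ref{item:a} that the weak hypothesis of the corollary implies the full hypothesis of Proposition~\ref{prop:comp_nerf}. Fix $a$ in $A$ and $b$ in $B$. The presheaf $N_i i(a)$ being aspherical, $k_A N_i i(a)$ is terminal in $\Hot$; by \ref{item:a} so is $k_B N_j i(a)$, whence $N_j i(a)$ is aspherical. Symmetrically, $N_j j(b)$ aspherical forces $k_A N_i j(b)$ to be terminal, hence $N_i j(b)$ aspherical. All four asphericity conditions of Proposition~\ref{prop:comp_nerf} therefore hold, and that proposition supplies exactly the zigzag asked for in \ref{item:c}.

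Finally, suppose $A$ and $B$ aspherical and $\M$ equipped with a final object $e$; in view of what precedes it suffices to prove \ref{item:b} $\Rightarrow$ \ref{item:c}, and again I would check the hypothesis of Proposition~\ref{prop:comp_nerf}. Since $\Hom_\M(i(a), e)$ and $\Hom_\M(j(b), e)$ are singletons, one has $N_i(e) = e_{\pref{A}}$ and $N_j(e) = e_{\pref{B}}$, and $N_i$ (resp.\ $N_j$) sends the unique morphism $i(a) \to e$ (resp.\ $j(b) \to e$) to the canonical morphism $N_i i(a) \to e_{\pref{A}}$ (resp.\ $N_j j(b) \to e_{\pref{B}}$). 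Since $A$ is aspherical, $N_i i(a)$ being aspherical is equivalent to $N_i(i(a) \to e)$ being a weak equivalence; by \ref{item:b}, $N_j(i(a) \to e)$ is then a weak equivalence, so $N_j i(a)$ is aspherical because $B$ is aspherical. Symmetrically, $N_i j(b)$ is aspherical. The hypothesis of Proposition~\ref{prop:comp_nerf} is thus met, yielding \ref{item:c}, which implies \ref{item:a} by the first paragraph; combined with \ref{item:a} $\Rightarrow$ \ref{item:b}, the three conditions are equivalent. I expect no genuine difficulty here: the one step requiring attention is the promotion of the two asphericity hypotheses of the corollary to the four of Proposition~\ref{prop:comp_nerf}, the rest being bookkeeping around the identity $k_C = p N i_C$.
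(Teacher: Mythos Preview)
Your proposal is correct and follows essentially the same route as the paper: you reduce both \ref{item:a} $\Rightarrow$ \ref{item:c} and (under the extra hypotheses) \ref{item:b} $\Rightarrow$ \ref{item:c} to verifying the full hypothesis of Proposition~\ref{prop:comp_nerf}, in the first case via the characterisation of asphericity as $k_C(F)$ being terminal in $\Hot$, and in the second via the equivalence, when $A$ is aspherical, between asphericity of $N_iX$ and $N_i(X \to e)$ being a weak equivalence. The paper's proof is the same, only more tersely stated.
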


\begin{proof}
  Les implications $\ref{item:c} \tod \ref{item:a} \tod \ref{item:b}$ sont
  immédiates.

  Montrons l'implication $\ref{item:a} \tod \ref{item:c}$. En vertu de la
  proposition précédente, il suffit de montrer que pour tout objet~$a$
  de~$A$ et tout objet~$b$ de $B$, les préfaisceaux~$N_ji(a)$ et~$N_ij(b)$
  sont asphériques. Or, l'hypothèse \ref{item:a} entraîne immédiatement que
  si $X$ est un objet de $\M$, alors $N_iX$ est asphérique si et seulement
  si $N_jX$ est asphérique, d'où le résultat.

  Montrons enfin l'implication $\ref{item:b} \tod \ref{item:c}$ sous les
  hypothèses additionnelles. Soit $X$ un objet de $\M$. Pour les mêmes
  raisons que précédemment, il nous suffit de montrer que $N_iX$ est
  asphérique si et seulement si $N_jX$ est asphérique. Or, puisque la
  catégorie $A$ est asphérique, le préfaisceau $N_iX$ est asphérique si et
  seulement si le morphisme $N_iX \to e_{\pref{A}}$, où $e_{\pref{A}}$
  désigne le préfaisceau final, est une équivalence faible. Ce
  morphisme étant l'image par $N_i$ du morphisme $X \to e$, où $e$ désigne
  l'objet final de~$\M$, on obtient que le préfaisceau $N_iX$ est asphérique
  si et seulement si le morphisme $X \to e$ est envoyé par $N_i$ sur une
  équivalence faible. On conclut en utilisant le résultat analogue pour $N_j$.
\end{proof}

Enfin, terminons cette section par des rappels sur la notion de catégorie
totalement asphérique.

\begin{paragraph}
  Soit $A$ une petite catégorie asphérique. On dit que $A$ est
  \ndef{totalement asphérique} si le foncteur $i_A : \pref{A} \to \Cat$
  commute aux produits binaires à équivalence faible près au sens où,
  pour tous préfaisceaux $F$ et $G$ sur $A$, le
  foncteur canonique $\tr{A}{(F \times G)} \to \tr{A}{F} \times \tr{A}{G}$
  est une équivalence de Thomason. Cela entraîne en particulier que si $a_1$
  et $a_2$ sont deux objets de $A$, alors la catégorie $\tr{A}{(a_1 \times
  a_2)}$, où $a_1 \times a_2$ est vu comme un préfaisceau sur $A$, est
  asphérique. (Cette seconde condition, \forlang{a priori} plus faible, se
  trouve en fait être équivalente, voir par exemple \cite[proposition
  1.6.1]{Maltsi}.)
\end{paragraph}

\begin{theorem}[Cisinski-Maltsiniotis]\label{thm:theta_tot_asph}
  Pour tout $n \ge 0$, la catégorie $\Theta_n$ est totalement asphérique.
  De même, la catégorie $\Theta$ est totalement asphérique.
\end{theorem}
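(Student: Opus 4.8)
Le plan est d'exploiter le critère rappelé ci-dessus : la totale asphéricité d'une catégorie asphérique $A$ équivaut à l'asphéricité du préfaisceau $a_1 \times a_2$ de $\pref{A}$ pour tout couple d'objets $a_1, a_2$ de $A$ (voir \cite[proposition~1.6.1]{Maltsi}). Il suffit donc de vérifier, d'une part, que les catégories $\Theta_n$ et $\Theta$ sont asphériques et, d'autre part, que pour tous objets $S$ et $S'$ de $\Theta_n$ (resp.\ de $\Theta$) le produit $S \times S'$, calculé dans $\pref{\Theta_n}$ (resp.\ dans $\pref{\Theta}$), est un préfaisceau asphérique. La première assertion est immédiate : pour $n \ge 1$, l'objet de $\Theta_n = \wrDelta{\Theta_{n-1}}$ associé à la famille vide (il s'identifie via $R_n$ à la $n$-catégorie ponctuelle $\Dn{0}$) est un objet \emph{final}, comme on le lit directement sur la description des morphismes d'un produit en couronne ; la catégorie $\Theta_n$ admet donc un objet final et est par suite asphérique, et, les foncteurs d'inclusion $\Theta_n \hookto \Theta_{n+1}$ préservant cet objet final, il en va de même de $\Theta = \limind \Theta_n$.

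Pour la seconde assertion, on procéderait par récurrence sur $n$, en utilisant la présentation $\Theta_{n+1} = \wrDelta{\Theta_n}$. Le cas de base $\Theta_1 = \cDelta$ est classique, $\cDelta$ étant totalement asphérique et même une catégorie test stricte (voir \cite{Maltsi}). Pour l'hérédité, le lemme clé est que $\wrDelta{A}$ est totalement asphérique dès que $A$ l'est. On analyserait pour cela un produit de deux préfaisceaux représentables sur $\wrDelta{A}$ au moyen de la projection $\pi : \wrDelta{A} \to \cDelta$ du paragraphe~\ref{paragr:proj_wr} : comme un morphisme de $\wrDelta{A}$ est la donnée d'un opérateur simplicial et d'une famille de morphismes de $A$, le produit $y(S) \times y(S')$ se décompose, fibre à fibre au-dessus de $\cDelta$, en un diagramme assemblé à partir de produits de préfaisceaux représentables sur $\cDelta$ (contrôlés par la totale asphéricité de $\cDelta$) et de produits de préfaisceaux représentables sur $A$ (contrôlés, par récurrence, par la totale asphéricité de $A$) ; on recollerait alors ces énoncés d'asphéricité grâce au formalisme des structures d'asphéricité de Grothendieck \cite[chapitre~IV]{GrothPS}, dans l'esprit des résultats de la présente section. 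Le cas de $\Theta$ s'obtiendrait enfin en passant à la limite inductive $\Theta = \limind \Theta_n$, la totale asphéricité étant stable par de telles limites filtrantes le long des foncteurs de transition (pleinement fidèles), puisque les groupes d'homotopie commutent aux limites inductives filtrantes.

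Le point délicat est précisément cette analyse combinatoire du produit en couronne : rendre précise la décomposition « fibre à fibre » de $y(S) \times y(S')$ au-dessus de $\cDelta$ et vérifier que le diagramme obtenu est asphérique. C'est essentiellement le contenu du théorème de Cisinski et Maltsiniotis affirmant que les catégories $\Theta_n$ et $\Theta$ sont des catégories test strictes, et, dans la rédaction définitive, on pourra se contenter d'invoquer ce résultat : une catégorie test stricte est asphérique et son foncteur catégorie des éléments commute aux produits binaires à équivalence de Thomason près, donc est totalement asphérique. Les deux points qui subsistent alors — le calcul de l'objet final ci-dessus et l'implication « catégorie test stricte $\Rightarrow$ totalement asphérique » — sont de routine.
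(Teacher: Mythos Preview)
The paper's proof is a bare citation to \cite[exemples 5.8 et 5.12]{CisMaltsiTheta}, and your proposal, after its sketch, ultimately lands in the same place: invoke the Cisinski--Maltsiniotis result that $\Theta_n$ and $\Theta$ are strict test categories, hence totally aspherical. So the two approaches coincide in substance.

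A couple of remarks on the surrounding sketch. First, the implication ``catégorie test stricte $\Rightarrow$ totalement asphérique'' is essentially definitional in \cite{Maltsi} (a strict test category is a totally aspherical local test category), so your justification of it is circular rather than a proof. Second, the two steps you flag as delicate---that $\wrDelta{A}$ is totally aspherical whenever $A$ is, and that total asphericity passes to the filtered colimit $\Theta = \limind \Theta_n$---are genuinely the content of \cite{CisMaltsiTheta} and are not routine; your fibrewise decomposition idea is in the right spirit but is not a proof as stated. Since you end by deferring to the cited reference anyway, the sketch is dispensable and the final writeup should simply cite \cite{CisMaltsiTheta}, as the paper does.
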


\begin{proof}
  Cela résulte de \cite[exemples 5.8 et 5.12]{CisMaltsiTheta}.
\end{proof}

\begin{remark}\label{rem:comp_Street_n-simpl}
  En particulier, la catégorie $\cDelta$, qui n'est autre que $\Theta_1$,
  est totalement asphérique. Il en résulte immédiatement que, pour tout $n
  \ge 0$, le foncteur diagonal $\delta : \cDelta \to \cDelta^n$ est
  asphérique. Ainsi, en vertu de la proposition~\ref{prop:caract_asph}, la
  transformation $\lambda_{\delta}$ est une équivalence de Thomason
  argument par argument, de sorte que les foncteurs
  \[
   \pref{\cDelta^n} \xto{\delta^\ast} \pref{\cDelta} \xto{k_\cDelta} \Hot
   \quadet
   \pref{\cDelta^n} \xto{k_{\cDelta^{\mkern-2mu n}}} \Hot
  \]
  sont isomorphes. Ainsi, le théorème~\ref{thm:comp_Street_n-simpl} entraîne
  que les foncteurs composés
  \[
    \nCat{n} \hookto \ooCat \xto{\NS} \pref{\cDelta} \xto{k_\cDelta} \Hot
    \quadet
    \nCat{n} \xto{\Nsn{n}} \pref{\cDelta^n} \xto{k_{\cDelta^{\mkern-2mu n}}} \Hot
  \]
  sont isomorphes.
\end{remark}

% TOCHECK
\vspace{-0.3cm}

\section{Comparaison du nerf de Street et du nerf cellulaire,\\ par
l'intermédiaire du nerf multi-simplicial}

Nous allons commencer par comparer le nerf $n$-simplicial et le nerf
$n$-cellulaire. Pour ce faire, nous allons montrer que le foncteur
$m_n : \cDelta^n \to \Theta_n$ du paragraphe~\ref{paragr:nerf_multi} est
asphérique. Cela résultera par récurrence du fait que, si $C$ est une
catégorie totalement asphérique, alors le foncteur $\mu : \cDelta \times C
\to \wrDelta{C}$ de ce même paragraphe est asphérique.

\begin{paragraph}
  Soit $C$ une catégorie. Considérons le triangle commutatif
  \[
    \xymatrix@C=1pc{
      \cDelta \times C \ar[dr]_{p_1} \ar[rr]^\mu & & \wrDelta{C} \ar[dl]^\pi \\
      & \cDelta & \bpbox{,}
    }
  \]
  où $p_1$ désigne la première projection et $\pi$ le foncteur du
  paragraphe~\ref{paragr:proj_wr}. Pour tout objet $(\Deltan{n}; c_1, \dots,
  c_n)$ de $\wrDelta{C}$, on déduit de ce triangle un foncteur
  \[ P : \tr{(\cDelta \times C)}{(\Deltan{n}; c_1, \dots, c_n)} \to
  \tr{\cDelta}{\Deltan{n}}. \]
  Explicitement, ce foncteur envoie un objet
  \[
  ((\Deltan{m}, c), (\phi; (f_{i',i})) : (\Deltan{m}; c, \dots, c) \to
  (\Deltan{n}; c_1, \dots, c_n))
  \]
 sur l'objet
  \[
  (\Deltan{m}, \phi : \Deltan{m} \to \Deltan{n})
  \]
  et un morphisme $(\psi, g)$ sur le morphisme $\psi$.
\end{paragraph}

\begin{proposition}\label{prop:P_asph}
  Pour toute catégorie $C$ et tout objet $(\Deltan{n}; c_1, \dots, c_n)$ de
  $\wrDelta{C}$, le foncteur
  \[ P : \tr{(\cDelta \times C)}{(\Deltan{n}; c_1, \dots, c_n)} \to
  \tr{\cDelta}{\Deltan{n}} \]
  du paragraphe précédent est une fibration de Grothendieck.
\end{proposition}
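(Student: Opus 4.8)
Le plan est de construire explicitement, pour chaque morphisme de la base, un relèvement cartésien, ce qui établit aussitôt que $P$ est une fibration de Grothendieck. Décrivons d'abord les catégories en jeu. Un objet de $\tr{(\cDelta \times C)}{(\Deltan{n}; c_1, \dots, c_n)}$ est la donnée d'un couple $(\Deltan{m'}, c')$ et d'un morphisme $\theta' = (\phi'; (f'_{i',i})) : \mu(\Deltan{m'}, c') \to (\Deltan{n}; c_1, \dots, c_n)$ de $\wrDelta{C}$ ; son image par $P$ est l'objet $(\Deltan{m'}, \phi')$ de $\tr{\cDelta}{\Deltan{n}}$. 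Un morphisme de $\tr{\cDelta}{\Deltan{n}}$ de but $(\Deltan{m'}, \phi')$ est la donnée d'un morphisme $\psi : \Deltan{m} \to \Deltan{m'}$ de $\cDelta$, sa source étant alors $(\Deltan{m}, \phi'\psi)$ ; étant donné un objet $\xi = ((\Deltan{m'}, c'), \theta')$ de la catégorie virgule au-dessus de $(\Deltan{m'}, \phi')$, il s'agit de lui associer un relèvement cartésien de $\psi$ de but $\xi$.

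\textbf{Le relèvement candidat.} On prendra pour relèvement l'objet $\psi^\ast\xi = \big((\Deltan{m}, c'),\, \theta' \circ \mu(\psi, \id{c'})\big)$ muni du morphisme $\bar\psi = (\psi, \id{c'}) : \psi^\ast\xi \to \xi$. Comme $\mu$ est un foncteur et que $\mu(\psi, \id{c'})$ a pour morphisme sous-jacent $\psi$ dans $\cDelta$, le composé $\theta' \circ \mu(\psi, \id{c'})$ a pour morphisme sous-jacent $\phi'\psi$ ; ainsi $P(\psi^\ast\xi) = (\Deltan{m}, \phi'\psi)$ et $P(\bar\psi) = \psi$, et $\bar\psi$ est tautologiquement un morphisme de la catégorie virgule (la condition à vérifier étant l'égalité triviale $\theta' \circ \mu(\psi, \id{c'}) = \theta' \circ \mu(\psi, \id{c'})$).

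\textbf{Cartésianité et point délicat.} Le cœur de la preuve est la vérification que $\bar\psi$ est cartésien, et l'observation structurelle décisive est que $\mu(\psi, g)$ porte la famille \emph{constante} d'étiquettes de $C$ égales à $g$ ; la fonctorialité de $\mu$ donne alors $\mu(\psi, \id{c'}) \circ \mu(\sigma, h) = \mu(\psi\sigma, h)$ pour tous $\sigma$ et $h$. Concrètement, soit $\gamma = (\rho, h) : Z \to \xi$ un morphisme de la catégorie totale, avec $Z = ((\Deltan{l}, c''), \theta'')$, et soit $\rho = \psi\sigma$ une factorisation de $P(\gamma)$ à travers $\psi$ dans la base ; l'unique relèvement de $\sigma$ en un morphisme $Z \to \psi^\ast\xi$ compatible avec $\bar\psi$ est $(\sigma, h)$, car sa composante dans $\cDelta$ est nécessairement $\sigma$ et la relation $\bar\psi \circ (\sigma, h') = \gamma$ impose $h' = h$ par composition composante par composante dans $\cDelta \times C$. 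Il reste à voir que $(\sigma, h)$ est effectivement un morphisme $Z \to \psi^\ast\xi$, c'est-à-dire que $(\theta' \circ \mu(\psi, \id{c'})) \circ \mu(\sigma, h) = \theta''$ : grâce à l'identité ci-dessus le membre de gauche vaut $\theta' \circ \mu(\rho, h)$, et cette égalité n'est autre que l'hypothèse selon laquelle $\gamma$ est un morphisme $Z \to \xi$. Il n'y a pas véritablement d'obstacle profond : tout se ramène à manipuler la loi de composition de $\wrDelta{C}$, la seule précaution étant de ne pas chercher à «~simplifier~» $\phi'$ — qui n'est pas nécessairement un monomorphisme — mais de raisonner directement à l'aide du triangle commutatif $\pi\mu = p_1$ et de la fonctorialité de $\mu$.
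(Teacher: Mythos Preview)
Ta preuve est correcte : les relèvements $(\psi,\id{c'})$ que tu construis sont bien cartésiens, et ta vérification de la propriété universelle ne présente aucune lacune. Le point clé, à savoir l'identité $\mu(\psi,\id{c'})\circ\mu(\sigma,h)=\mu(\psi\sigma,h)$ issue de la fonctorialité de $\mu$, est bien identifié et utilisé.

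L'article procède différemment : au lieu de construire les relèvements à la main, il observe que la projection $p_1:\cDelta\times C\to\cDelta$ est une fibration de Grothendieck (triviale), puis établit un lemme général affirmant que, pour tout triangle commutatif $wu=v$ avec $v$ une fibration de Grothendieck, le foncteur induit $\tr{A}{b}\to\tr{C}{w(b)}$ est encore une fibration. Ce lemme se prouve en factorisant ce foncteur en $\tr{A}{b}\to\tr{A}{w(b)}\to\tr{C}{w(b)}$, le premier facteur étant une fibration parce que tous ses morphismes sont cartésiens, et le second s'obtenant par changement de base de $v$. Ton approche est plus directe et auto-contenue~; celle de l'article isole un énoncé réutilisable et met en évidence la provenance conceptuelle du résultat (tes relèvements $(\psi,\id{c'})$ ne sont autres que les relèvements cartésiens de la fibration produit $p_1$, transportés dans la catégorie virgule).
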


\begin{proof}
Le foncteur $p_1 : \cDelta \times C \to \cDelta$ étant une fibration de
Grothendieck, c'est un cas particulier du lemme général suivant.
\end{proof}

\begin{lemma}
  Soit
  \[
    \xymatrix@C=1pc{
      A \ar[dr]_v \ar[rr]^u & & B \ar[dl]^w \\
      & C
    }
  \]
  un triangle commutatif de foncteurs, avec $v$ une fibration de
  Grothendieck. Pour tout objet $b$ de~$B$, le foncteur canonique
  \[
    \begin{split}
      \tr{A}{b} & \to \tr{C}{w(b)} \\
      (a, f) & \mapsto (v(a), w(f))
    \end{split}
  \]
  est une fibration de Grothendieck.
\end{lemma}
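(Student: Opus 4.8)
Le plan est de relever dans $\tr{A}{b}$ des morphismes cartésiens déjà disponibles dans $A$, le foncteur $v$ étant une fibration de Grothendieck par hypothèse. Notons $Q : \tr{A}{b} \to \tr{C}{w(b)}$ le foncteur de l'énoncé ; rappelons qu'il envoie un objet $(a, f)$, avec $f : u(a) \to b$, sur $(v(a), w(f))$, en identifiant $w(u(a))$ à $v(a)$ grâce à l'égalité $wu = v$. On utilisera la caractérisation suivante : un foncteur $p : E \to D$ est une fibration de Grothendieck si et seulement si, pour tout objet $e$ de $E$ et tout morphisme $\gamma : d \to p(e)$ de $D$, il existe un relèvement cartésien de $\gamma$ d'arrivée $e$ ; et un morphisme $\phi : e' \to e$ de $E$ est \emph{cartésien} au-dessus de $p$ lorsque, pour tout morphisme $\psi : e'' \to e$ de $E$ et tout morphisme $\delta : p(e'') \to p(e')$ de $D$ vérifiant $p(\phi)\delta = p(\psi)$, il existe un unique morphisme $\bar\delta : e'' \to e'$ de $E$ tel que $\phi\bar\delta = \psi$ et $p(\bar\delta) = \delta$.

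On commence par fixer un objet $(a, f)$ de $\tr{A}{b}$ et un morphisme $k : (c, h) \to Q(a, f) = (v(a), w(f))$ de $\tr{C}{w(b)}$, c'est-à-dire un morphisme $k : c \to v(a)$ de $C$ tel que $w(f)\,k = h$. Puisque $v$ est une fibration de Grothendieck, on choisit un relèvement cartésien $\tilde k : a' \to a$ de $k$ dans $A$, d'où $v(a') = c$ et $v(\tilde k) = k$, et on pose $f' = f \circ u(\tilde k) : u(a') \to b$. Alors $(a', f')$ est un objet de $\tr{A}{b}$, le morphisme $\tilde k$ définit un morphisme $(a', f') \to (a, f)$ de $\tr{A}{b}$ par la définition même de $f'$, et l'on a $Q(\tilde k) = v(\tilde k) = k$ ainsi que $w(f') = w(f)\,w(u(\tilde k)) = w(f)\,v(\tilde k) = w(f)\,k = h$, de sorte que $Q(a', f') = (c, h)$ et que $\tilde k$ est bien un relèvement de $k$ par $Q$.

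Il reste à vérifier que $\tilde k$ est cartésien au-dessus de $Q$, ce qui constitue l'unique point demandant un peu de soin. On se donne un objet $(a'', f'')$ de $\tr{A}{b}$, un morphisme $g : (a'', f'') \to (a, f)$ de $\tr{A}{b}$ et un morphisme $m : Q(a'', f'') \to (c, h)$ de $\tr{C}{w(b)}$ tels que $Q(\tilde k)\,m = Q(g)$. En explicitant, cela revient à la donnée d'un morphisme $g : a'' \to a$ de $A$ avec $f \circ u(g) = f''$ et d'un morphisme $m : v(a'') \to c$ de $C$ avec $h\,m = w(f'')$ et $k\,m = v(g)$. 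Le morphisme $\tilde k$ étant cartésien au-dessus de $v$ dans $A$, il existe un unique $\bar m : a'' \to a'$ de $A$ tel que $v(\bar m) = m$ et $\tilde k \circ \bar m = g$ ; on a alors $f' \circ u(\bar m) = f \circ u(\tilde k) \circ u(\bar m) = f \circ u(\tilde k \circ \bar m) = f \circ u(g) = f''$, donc $\bar m$ définit un morphisme $(a'', f'') \to (a', f')$ de $\tr{A}{b}$, unique parmi ceux vérifiant $Q(\bar m) = m$ et $\tilde k \circ \bar m = g$ puisque déjà unique dans $A$. Ainsi $\tilde k$ est un relèvement cartésien de $k$, ce qui montre que $Q$ est une fibration de Grothendieck.

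On n'attend pas d'obstacle réel : il s'agit d'une chasse au diagramme formelle, et la seule difficulté sera de ne pas confondre les identités en jeu ($wu = v$, les conditions définissant les morphismes dans $\tr{A}{b}$ et $\tr{C}{w(b)}$, et la propriété universelle des morphismes cartésiens dans $A$). En particulier, la proposition~\ref{prop:P_asph} en résulte en appliquant le lemme au triangle formé de $\mu$, de $p_1$ et de $\pi$, le foncteur $p_1 : \cDelta \times C \to \cDelta$ étant une fibration de Grothendieck.
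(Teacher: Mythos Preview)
Your proof is correct: the direct construction of cartesian lifts in $\tr{A}{b}$ from cartesian lifts in $A$ goes through exactly as you describe, and the verification of the cartesian property is complete and accurate.

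The paper proceeds differently. It factors the functor $Q$ as a composite
\[
\tr{A}{b} \xrightarrow{(a,f)\mapsto(a,w(f))} \tr{A}{w(b)} \xrightarrow{(a,g)\mapsto(v(a),g)} \tr{C}{w(b)},
\]
where $\tr{A}{w(b)}$ is the comma category formed via the functor $v : A \to C$, and shows that each factor is a Grothendieck fibration: for the first, one checks that every arrow of $\tr{A}{b}$ is cartesian over it; for the second, one observes that it is a pullback of $v$ along the forgetful functor $\tr{C}{w(b)} \to C$, hence a fibration by base change. The paper's argument is thus more structural, appealing to closure properties of fibrations (composition and pullback), whereas yours is a direct diagram chase. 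Both are perfectly valid; the paper's decomposition has the advantage of isolating the two mechanisms at play (the ``change of base along $w$'' and the ``fibration $v$'') and reusing standard closure results, while yours has the virtue of being self-contained and not requiring the reader to unpack those closure properties.

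One small remark: the final paragraph of your proposal, commenting on the expected difficulty and deducing Proposition~\ref{prop:P_asph}, is metadiscourse that does not belong in the body of the proof itself; you may want to drop it or move the application to the proof of that proposition.
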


\begin{proof}
  Le foncteur de l'énoncé se factorise comme le composé des foncteurs
  \[
    \begin{split}
      \tr{A}{b} & \to \tr{A}{w(b)} \\
      (a, f) & \mapsto (a, w(f))
    \end{split}
    \quadet
    \begin{split}
      \tr{A}{w(b)} & \to \tr{C}{w(b)} \\
      (a, g) & \mapsto (v(a), g)
    \end{split}
  \]
  et il suffit donc de montrer que chacun de ces foncteurs est une fibration
  de Grothendieck. On vérifie que toutes les flèches de $\tr{A}{b}$
  sont cartésiennes par rapport au premier foncteur et on en déduit
  immédiatement que ce premier foncteur est une fibration de Grothendieck.
  En ce qui concerne le second foncteur, il s'inscrit dans un carré
  cartésien
  \[
    \xymatrix{
      \tr{A}{w(b)} \ar[r] \ar[d] \pullbackcorner & A \ar[d]^v \\
      \tr{C}{w(b)} \ar[r] & C \pbox{,}
    }
  \]
  où les flèches horizontales sont les foncteurs d'oubli. Ainsi, ce second
  foncteur s'obtient par changement de base à partir de la fibration de
  Grothendieck $v$. C'est donc une fibration de Grothendieck, d'où le
  résultat.
\end{proof}

\begin{proposition}
  Pour toute petite catégorie totalement asphérique $C$, le foncteur
  \[ \mu : \cDelta \times C \to \wrDelta{C} \]
  est asphérique.
\end{proposition}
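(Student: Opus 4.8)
Le plan consiste à établir l'asphéricité de $\mu$ directement à partir de la définition : il faut montrer que, pour tout objet $(\Deltan{n}; c_1, \dots, c_n)$ de $\wrDelta{C}$, la catégorie $\tr{(\cDelta \times C)}{(\Deltan{n}; c_1, \dots, c_n)}$ est asphérique. On dispose pour cela, en vertu de la proposition~\ref{prop:P_asph}, de la fibration de Grothendieck
\[
  P : \tr{(\cDelta \times C)}{(\Deltan{n}; c_1, \dots, c_n)} \longrightarrow \tr{\cDelta}{\Deltan{n}}.
\]
La catégorie $\tr{\cDelta}{\Deltan{n}}$, qui n'est autre que la tranche de $\cDelta$ au-dessus de $\Deltan{n}$, admet un objet final, à savoir $(\Deltan{n}, \id{\Deltan{n}})$, et est donc asphérique. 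On utilisera alors le fait classique qu'une fibration de Grothendieck dont toutes les fibres sont asphériques est une équivalence de Thomason --- c'est une variante du théorème~A de Quillen, les fibres d'une telle fibration se réalisant comme sous-catégories coréflexives des catégories comma duales correspondantes (voir par exemple \cite{Maltsi}). Appliqué à $P$, ce fait ramènera la preuve à la seule vérification que les fibres de $P$ sont asphériques : on en déduira que $P$ est une équivalence de Thomason, puis, $\tr{\cDelta}{\Deltan{n}}$ étant asphérique, que la catégorie $\tr{(\cDelta \times C)}{(\Deltan{n}; c_1, \dots, c_n)}$ l'est également.

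Le cœur de l'argument, et le point qui demandera le plus de soin, est l'identification des fibres de $P$. En déroulant les définitions de $\wrDelta{C}$, du foncteur $\mu$ et du foncteur $P$, on constate que la fibre de $P$ au-dessus d'un objet $(\Deltan{m}, \phi : \Deltan{m} \to \Deltan{n})$ de $\tr{\cDelta}{\Deltan{n}}$ s'identifie à la catégorie suivante : ses objets sont les couples formés d'un objet $c$ de $C$ et d'une famille de morphismes $(f_{i'} : c \to c_{i'})_{i' \in J}$, où $J = \{\, i' \mid \phi(0) < i' \le \phi(m) \,\}$, et ses morphismes de $(c, (f_{i'}))$ vers $(c', (f'_{i'}))$ sont les morphismes $g : c \to c'$ de $C$ tels que $f'_{i'}\, g = f_{i'}$ pour tout $i'$ dans $J$. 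Par le lemme de Yoneda, cette catégorie est la catégorie des éléments $\tr{C}{F_\phi}$ du préfaisceau
\[
  F_\phi = \prod_{i' \in J} c_{i'}
\]
sur $C$, produit dans $\pref{C}$ des préfaisceaux représentables associés aux $c_{i'}$, avec la convention que $F_\phi$ est le préfaisceau final $e_{\pref{C}}$ lorsque $J$ est vide, c'est-à-dire lorsque le morphisme $\phi$ est constant.

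Il restera à montrer que $F_\phi$ est un préfaisceau asphérique, autrement dit que sa catégorie des éléments $\tr{C}{F_\phi}$ est asphérique ; c'est ici qu'interviendra l'hypothèse de totale asphéricité de $C$. Lorsque $J$ est vide, on a $\tr{C}{F_\phi} \simeq \tr{C}{e_{\pref{C}}} \simeq C$, asphérique puisque $C$ est totalement asphérique. Lorsque $J = \{i'_1, \dots, i'_k\}$ est non vide, on procédera par récurrence sur $k \ge 1$ : le cas $k = 1$ résulte de ce que $\tr{C}{c_{i'_1}}$, étant la tranche de $C$ au-dessus de $c_{i'_1}$, admet un objet final ; pour $k \ge 2$, en posant $G = c_{i'_1} \times \cdots \times c_{i'_{k-1}}$, de sorte que $F_\phi \simeq G \times c_{i'_k}$, la totale asphéricité de $C$ fournit une équivalence de Thomason $\tr{C}{F_\phi} \to \tr{C}{G} \times \tr{C}{c_{i'_k}}$, dont le premier facteur du but est asphérique par hypothèse de récurrence et le second l'est car il admet un objet final, le produit de deux catégories asphériques étant asphérique. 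On conclura ainsi que $\tr{C}{F_\phi}$ est asphérique, donc que toutes les fibres de $P$ le sont, et l'asphéricité de $\mu$ en résultera.
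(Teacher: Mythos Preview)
Ta preuve est correcte et suit exactement la même stratégie que celle de l'article : utiliser la fibration de Grothendieck $P$ de la proposition~\ref{prop:P_asph}, identifier ses fibres à des catégories d'éléments $\tr{C}{\prod c_{i'}}$, invoquer la totale asphéricité de $C$ pour conclure que ces fibres sont asphériques, puis appliquer le résultat de Quillen sur les fibrations à fibres asphériques. Tu détailles davantage l'identification des fibres et la récurrence sur le nombre de facteurs du produit, mais il n'y a aucune différence de fond.
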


\begin{proof}
  Soit $(\Deltan{n}; c_1, \dots, c_n)$ un objet de $\wrDelta{C}$. Il s'agit
  de montrer que la catégorie
  $\tr{(\cDelta \times C)}{(\Deltan{n}; c_1, \dots, c_n)}$ est asphérique.
  En vertu de la proposition~\ref{prop:P_asph}, le foncteur
  \[ P : \tr{(\cDelta \times C)}{(\Deltan{n}; c_1, \dots, c_n)} \to
  \tr{\cDelta}{\Deltan{n}} \]
  est une fibration de Grothendieck. Si $(\Deltan{m}, \phi : \Deltan{m} \to
  \Deltan{n})$ est un objet de la catégorie but de $P$, on vérifie
  immédiatement que la fibre de $P$ en cet objet est la
  catégorie~$\tr{C}{\prod_{\phi(0) < i \le \phi(m)} c_i}$. La catégorie $C$
  étant supposée totalement asphérique, cette fibre est asphérique et le
  foncteur $P$ est donc une fibration à fibres asphériques, et par
  conséquent une équivalence de Thomason (voir \cite[§1, théorème A,
  corollaire]{QuillenHAKTI}).  Puisque la catégorie but de $P$ est
  asphérique (car elle possède un objet final), il en est de même de sa
  source, ce qu'on voulait montrer.
\end{proof}

\begin{corollary}
  Pour tout $n \ge 0$, le foncteur
  \[ m_n : \cDelta^n \to \Theta_n \]
  du paragraphe~\ref{paragr:nerf_multi} est asphérique.
\end{corollary}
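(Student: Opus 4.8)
The plan is to argue by induction on $n$, feeding the preceding proposition (asphericity of $\mu$ over a totally aspherical base) together with the Cisinski--Maltsiniotis theorem~\ref{thm:theta_tot_asph} into the recursive description of $m_n$ given in paragraph~\ref{paragr:nerf_multi}.

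For $n = 0$ there is nothing to check: $m_0$ is the identity of $\cDelta^0 = \ponct = \Theta_0$, and an identity functor is aspherical since each slice category it defines has a final object. For the inductive step, I would assume $m_n : \cDelta^n \to \Theta_n$ aspherical and use the factorization
\[
  \cDelta^{n+1} = \cDelta \times \cDelta^n \xto{\cDelta \times m_n} \cDelta \times \Theta_n
  \xto{\mu} \wrDelta{\Theta_n} = \Theta_{n+1}.
\]
First I would check that $\cDelta \times m_n = \id{\cDelta} \times m_n$ is aspherical: for an object $(\Deltan{k}, T)$ of $\cDelta \times \Theta_n$, the slice $\tr{(\cDelta \times \cDelta^n)}{(\Deltan{k}, T)}$ is canonically isomorphic to the product $(\tr{\cDelta}{\Deltan{k}}) \times (\tr{\cDelta^n}{T})$, whose first factor has a final object and whose second factor is aspherical by the induction hypothesis; since the nerve functor $\Cat \to \pref{\cDelta}$ preserves products and the product of two weakly contractible simplicial sets is weakly contractible, this product category is aspherical. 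On the other hand, by Theorem~\ref{thm:theta_tot_asph} the category $\Theta_n$ is totally aspherical, so the preceding proposition applies and $\mu : \cDelta \times \Theta_n \to \wrDelta{\Theta_n}$ is aspherical. Finally I would invoke the stability of aspherical functors under composition --- which follows from Proposition~\ref{prop:caract_asph}, the natural transformation $\lambda_{vu}$ attached to a composite $vu$ being obtained by pasting $\lambda_u$, evaluated on inverse images, with $\lambda_v$; see also \cite{Maltsi} --- to conclude that $m_{n+1}$ is aspherical, which closes the induction.

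The argument is essentially formal once these inputs are granted, so I do not anticipate any genuine obstacle here; the only non-elementary ingredient is Theorem~\ref{thm:theta_tot_asph}, which I treat as a black box. Note that by Quillen's theorem~A this corollary already implies that $m_n$ is a Thomason equivalence, and, combined with Proposition~\ref{prop:comp_nerf_asp}, it will yield the comparison between the $n$-simplicial and the $n$-cellular nerves announced at the beginning of this section.
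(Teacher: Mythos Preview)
Your proof is correct and follows essentially the same route as the paper: induction on $n$, asphericity of $\cDelta \times m_n$ (the paper cites \cite[corollaire~1.1.6]{Maltsi} for stability under products where you compute the slice directly), asphericity of $\mu$ from the preceding proposition together with Theorem~\ref{thm:theta_tot_asph}, and stability of aspherical functors under composition (the paper cites \cite[proposition~1.1.8]{Maltsi}).
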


\begin{proof}
  L'assertion est claire pour $n = 0$. Supposons-la vraie pour un
  entier~$n$.
  Par définition, le foncteur $m_{n+1}$
  est le composé
      \[
        \cDelta^{n+1} = \cDelta \times \cDelta^{n} \xto{\cDelta \times m_n} \cDelta \times
        \Theta_n \xto{\mu} \wrDelta{\Theta_n} = \Theta_{n+1} \pbox{.}
      \]
  Par récurrence, le foncteur $m_n$ est asphérique. Il en est donc de même
  du foncteur~$\cDelta \times m_n$, les foncteurs asphériques étant stables
  par produit (voir par exemple~\cite[corollaire~1.1.6]{Maltsi}). Par
  ailleurs, puisque la catégorie $\Theta_n$ est totalement asphérique
  d'après le théorème~\ref{thm:theta_tot_asph}, il résulte de la proposition
  précédente que le foncteur $\mu : \cDelta \times \Theta_n \to
  \wrDelta{\Theta_n}$ est asphérique. L'assertion est donc conséquence de la
  stabilité des foncteurs asphériques par composition (voir par
  exemple~\cite[proposition~1.1.8]{Maltsi}).
\end{proof}

\begin{theorem}\label{thm:comp_simpl_cell}
  Pour tout $n \ge 0$, le nerf $n$-simplicial et le nerf $n$-cellulaire sont
  équivalents au sens où les foncteurs composés
  \[
    \nCat{n} \xto{\Nsn{n}} \pref{\cDelta^n} \xto{k_{\cDelta^{\mkern-2mu n}}} \Hot
    \quadet
    \nCat{n} \xto{\NCn{n}} \pref{\Theta_n} \xto{k_{\Theta_n}} \Hot
  \]
  sont isomorphes. Plus précisément, la transformation naturelle
    \[ \lambda_{m_n} : i_{\cDelta^{\mkern-2mu n}}\Nsn{n} \tod
    i_{\Theta_n}\NCn{n} \]
  (voir le paragraphe~\ref{paragr:def_lambda}) est une équivalence de
  Thomason argument par argument.
\end{theorem}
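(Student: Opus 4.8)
The plan is to deduce the statement directly from Proposition~\ref{prop:comp_nerf_asp}, applied to the functor $m_n : \cDelta^n \to \Theta_n$ regarded as a morphism of small categories over $\M = \nCat{n}$.

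First I would record that the relevant triangle commutes. By the definition of $M_n$ given in paragraph~\ref{paragr:nerf_multi}, the functor $M_n : \cDelta^n \to \nCat{n}$ is the composite of $m_n$ with the canonical inclusion $\Theta_n \hookto \nCat{n}$, and by paragraph~\ref{paragr:def_R_n} this inclusion is precisely the fully faithful functor $R_n : \Theta_n \to \nCat{n}$. Hence $M_n = R_n \circ m_n$, so that we are in the situation of a commutative triangle
\[
  \xymatrix@C=1.5pc{
    \cDelta^n \ar[dr]_{M_n} \ar[rr]^{m_n} & & \Theta_n \ar[dl]^{R_n} \\
    & \nCat{n}
  }
\]
Moreover, by the very definitions of the nerve functors involved, the nerve $N_{M_n}$ associated to $M_n$ is the $n$-simplicial nerve $\Nsn{n}$ and the nerve $N_{R_n}$ associated to $R_n$ is the $n$-cellular nerve $\NCn{n}$.

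Next I would invoke the corollary established immediately above, according to which the functor $m_n : \cDelta^n \to \Theta_n$ is aspherical. All the hypotheses of Proposition~\ref{prop:comp_nerf_asp} are then met, with $A = \cDelta^n$, $B = \Theta_n$, $u = m_n$, $i = M_n$ and $j = R_n$. That proposition at once yields that the composites $k_{\cDelta^n}\Nsn{n}$ and $k_{\Theta_n}\NCn{n}$ are isomorphic and, more precisely, that the natural transformation induced by $\lambda_{m_n}$ between $i_{\cDelta^n}\Nsn{n}$ and $i_{\Theta_n}\NCn{n}$ is a Thomason equivalence argument by argument, which is exactly the assertion to be proved.

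Since the substantive work has already been carried out, in the preceding corollary, in establishing the asphericity of $m_n$ (which itself rested on the asphericity of $\mu$ for totally aspherical $C$ and on Theorem~\ref{thm:theta_tot_asph}), there is no serious obstacle remaining; the only point deserving a word of care is to check that the natural transformation produced by Proposition~\ref{prop:comp_nerf_asp} is indeed $\lambda_{m_n}$ in the sense of paragraph~\ref{paragr:def_lambda} and that it points in the direction displayed in the statement. This comes down to the identity $m_n^{\ast}\NCn{n} = m_n^{\ast}N_{R_n} = N_{R_n m_n} = N_{M_n} = \Nsn{n}$, which shows that whiskering $\lambda_{m_n} : i_{\cDelta^n} m_n^{\ast} \tod i_{\Theta_n}$ on the right with $\NCn{n} : \nCat{n} \to \pref{\Theta_n}$ produces precisely the arrow $i_{\cDelta^n}\Nsn{n} \tod i_{\Theta_n}\NCn{n}$ appearing in the theorem.
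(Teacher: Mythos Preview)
Your proof is correct and follows exactly the same approach as the paper: apply Proposition~\ref{prop:comp_nerf_asp} to the commutative triangle $M_n = R_n \circ m_n$, using the preceding corollary that $m_n$ is aspherical. Your final paragraph spelling out the identity $m_n^{\ast}\NCn{n} = \Nsn{n}$ and the resulting direction of $\lambda_{m_n}$ is a welcome bit of extra care that the paper leaves implicit.
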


\begin{proof}
  Cela résulte immédiatement du corollaire précédent, en vertu de la
  proposition~\ref{prop:comp_nerf_asp} appliquée au triangle commutatif
  \[
    \xymatrix@C=1.5pc{
      \cDelta^n \ar[dr]_{M_n} \ar[rr]^{m_n} & & \Theta_n \ar[dl]^{R_n} \\
      & \nCat{n}
    }
  \]
  (voir les paragraphes~\ref{paragr:def_R_n} et \ref{paragr:nerf_multi}).
\end{proof}

Nous allons maintenant comparer le nerf de Street et le nerf cellulaire.

\begin{lemma}
  Si $C$ est une $n$-catégorie, pour $n \ge 0$ un entier, alors l'ensemble
  simplicial~$\NS C$ est asphérique si et seulement si l'ensemble
  cellulaire $\NC C$ est asphérique.
\end{lemma}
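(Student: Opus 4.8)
Le plan est d'établir, pour $C$ une $n$-catégorie, la chaîne d'équivalences
\[
  \text{$\NS C$ asphérique}
  \quadssi
  \text{$\NCn{n}C$ asphérique}
  \quadssi
  \text{$\NC C$ asphérique.}
\]
On utilisera à plusieurs reprises le fait qu'un préfaisceau $F$ sur une petite catégorie $A$ est asphérique si et seulement si $k_A F$ est l'objet final de $\Hot$, et donc que tout foncteur entre catégories d'éléments de deux préfaisceaux qui est une équivalence de Thomason transporte l'asphéricité de sa source à son but.

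La première équivalence se déduit aussitôt des sections précédentes. En effet, d'après la fin de la remarque~\ref{rem:comp_Street_n-simpl}, les foncteurs composés $\nCat{n} \hookto \ooCat \xto{\NS} \pref{\cDelta} \xto{k_{\cDelta}} \Hot$ et $k_{\cDelta^n}\Nsn{n}$ sont isomorphes, et le théorème~\ref{thm:comp_simpl_cell} affirme qu'il en est de même de $k_{\cDelta^n}\Nsn{n}$ et de $k_{\Theta_n}\NCn{n}$ ; ainsi les foncteurs $k_{\cDelta}\NS$ et $k_{\Theta_n}\NCn{n}$ sont isomorphes en restriction à $\nCat{n}$. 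Par suite, pour $C$ une $n$-catégorie, $\NS C$ est asphérique si et seulement si $k_{\Theta_n}\NCn{n}C$ est l'objet final de $\Hot$, c'est-à-dire si et seulement si le préfaisceau $\NCn{n}C$ est asphérique.

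La seconde équivalence est le cœur de la preuve. On rappelle que $\NCn{n}C = i_n^\ast\NC C$, où $i_n : \Theta_n \hookto \Theta$ est l'inclusion canonique. Le fait structurel sur $\Theta$ qu'on utilisera est que l'image essentielle du foncteur $R : \Theta \to \ooCat$ est stable par la troncation intelligente $\ti{n} : \ooCat \to \nCat{n}$, adjoint à gauche de l'inclusion $\nCat{n} \hookto \ooCat$ ; plus précisément, qu'il existe un foncteur de troncation $\tau_n : \Theta \to \Theta_n$, identité sur $\Theta_n$ et muni d'un isomorphisme canonique $\ti{n}R \simeq R_n\tau_n$ (on peut le décrire explicitement par $\tau_n(\Deltan{k}; c_1, \dots, c_k) = (\Deltan{k}; \tau_{n-1}c_1, \dots, \tau_{n-1}c_k)$, avec $\tau_0$ constant). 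La pleine fidélité des foncteurs $R$ et $R_n$ et la propriété universelle de $\ti{n}$ fournissent alors, pour $U$ dans $\Theta$ et $T$ dans $\Theta_n$, des bijections naturelles
\[
  \Hom_{\Theta}(U, i_n T) \simeq \Hom_{\nCat{n}}(\ti{n}R(U), R_n(T)) \simeq \Hom_{\Theta_n}(\tau_n U, T),
\]
de sorte que $\tau_n$ est adjoint à gauche de $i_n$. Puisque $\tau_n$ admet ainsi un adjoint à droite, pour tout objet $T$ de $\Theta_n$, la catégorie $\tr{\Theta}{T}$ relative à $\tau_n$ est isomorphe à la tranche $\tr{\Theta}{i_n T}$ au-dessus de l'objet $i_n T$, laquelle possède un objet final ; elle est donc asphérique, le foncteur $\tau_n$ est par conséquent asphérique et, en vertu de la proposition~\ref{prop:caract_asph}, la transformation naturelle $\lambda_{\tau_n}$ du paragraphe~\ref{paragr:def_lambda} est une équivalence de Thomason argument par argument.

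Enfin, pour $C$ une $n$-catégorie, on a un isomorphisme canonique $\NC C \simeq \tau_n^\ast\NCn{n}C$ : pour tout objet $T$ de $\Theta$,
\[
  \NC(C)_T = \Hom_{\ooCat}(R(T), C) \simeq \Hom_{\nCat{n}}(\ti{n}R(T), C) \simeq \Hom_{\nCat{n}}(R_n(\tau_n T), C) = \NCn{n}(C)_{\tau_n T},
\]
naturellement en $T$, la deuxième bijection provenant de ce que $C$ est une $n$-catégorie et la troisième de l'isomorphisme $\ti{n}R \simeq R_n\tau_n$. En évaluant $\lambda_{\tau_n}$ au préfaisceau $\NCn{n}C$, on obtient donc une équivalence de Thomason $\tr{\Theta}{\NC C} \simeq \tr{\Theta}{\tau_n^\ast\NCn{n}C} \to \tr{\Theta_n}{\NCn{n}C}$, ce qui montre que $\NC C$ est asphérique si et seulement si $\NCn{n}C$ l'est et, avec la première équivalence, achève la démonstration. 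L'obstacle principal est le fait structurel utilisé ci-dessus — la stabilité de l'image essentielle de $R$ par la troncation $\ti{n}$, c'est-à-dire l'isomorphisme $\ti{n}R \simeq R_n\tau_n$ — qu'il faudra soit démontrer à partir de la description de $\Theta$ par produit en couronne, soit citer ; ce point une fois acquis, tout le reste de la preuve est formel.
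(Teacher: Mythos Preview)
Your proof is correct and follows essentially the same route as the paper's: both reduce the question to comparing $\NCn{n}C$ and $\NC C$ via the truncation functor $\tau_n : \Theta \to \Theta_n$ induced by the intelligent truncation $\ti{n}$, observe that $\tau_n$ is left adjoint to the inclusion $\Theta_n \hookto \Theta$ and hence asphérique, and conclude using $\lambda_{\tau_n}$ together with the identification $\NC C \simeq \tau_n^\ast\NCn{n}C$. The only differences are cosmetic: you give the recursive formula for $\tau_n$ and spell out the adjunction via the full faithfulness of $R$ and $R_n$, whereas the paper invokes the description of objects of $\Theta$ as amalgamated sums and the fact that $\ti{n}$ commutes with colimits.
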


\begin{proof}
  En vertu de la remarque~\ref{rem:comp_Street_n-simpl} (dont le contenu
  technique est le théorème~\ref{thm:comp_Street_n-simpl}), si $C$ est une
  $n$-catégorie, l'ensemble simplicial $\NS C$ est asphérique si et
  seulement si l'ensemble $n$-simplicial $\Nsn{n} C$ est asphérique. Par
  ailleurs, en vertu du théorème précédent, cet ensemble $n$-simplicial est
  asphérique si et seulement si l'ensemble $n$-cellulaire $\NCn{n} C$ est
  asphérique. Il nous suffit donc de justifier que ce dernier est asphérique
  si et seulement si l'ensemble cellulaire $\NC C$ est asphérique.

  Rappelons que le foncteur d'inclusion $\nCat{n} \hookto \ooCat$ admet un
  adjoint à gauche $\tau : \ooCat \to \nCat{n}$, le foncteur de
  \ndef{troncation intelligente}. Si $A$ est une \oo-catégorie, on a
    \[
      \tau(A)_k =
      \begin{cases}
        A_k & \text{si $0 \le k < n$,} \\
        A_n/{\sim} & \text{si $k = n$,}
      \end{cases}
    \]
    \renewcommand\taut{\tau^{}_\Theta}%
    \newcommand\tautast{\tau^\ast_\Theta}%
    où la relation $\sim$ identifie deux $n$-cellules reliées par un zigzag
    de $(n+1)$-cellules. Puisque le foncteur $\tau$ est un adjoint à gauche, il
    commute aux limites inductives et il résulte de la description des objets
    de $\Theta$ comme sommes amalgamées que le foncteur~$\tau$
    envoie~$\Theta$ dans~$\Theta_n$ et induit donc un foncteur $\taut
    : \Theta \to \Theta_n$, adjoint à gauche du foncteur d'inclusion
    $\Theta_n \hookto \Theta$. Le foncteur $\taut$ étant un adjoint à gauche, il
    est asphérique et, en vertu de la proposition~\ref{prop:caract_asph},
    pour tout ensemble $n$\=/cellulaire~$X$, le foncteur $\lambda_{\taut} :
    \tr{\Theta}{\tautast(X)} \to
    \tr{\Theta_n}{X}$ est une équivalence de Thomason. Ainsi, si $C$ est une
    $n$\=/catégorie, l'ensemble $n$\=/cellulaire~$\NCn{n}C$ est asphérique si et
    seulement si l'ensemble cellulaire~$\tautast(\NCn{n}C)$ est asphérique.
    Or, il est immédiat, par adjonction, que ce dernier ensemble cellulaire
    est canoniquement isomorphe au nerf cellulaire~$\NC C$ de $C$, d'où le
    résultat.
\end{proof}

\begin{theorem}\label{thm:comp_Street_cellulaire}
  Le nerf de Street et le nerf cellulaire sont équivalents au sens où les
  foncteurs composés
  \[
    \ooCat \xto{\NS} \pref{\cDelta} \xto{k_{\cDelta}} \Hot
    \quadet
    \ooCat \xto{\NC} \pref{\Theta} \xto{k_{\Theta}} \Hot
  \]
  sont isomorphes. Plus précisément, il existe un zigzag de transformations
  naturelles de la forme
    \[ i_{\cDelta}\NS \tod \bullet \otd i_{\Theta}\NC \]
  qui sont des équivalences de Thomason argument par argument.
\end{theorem}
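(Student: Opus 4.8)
The plan is to apply the general nerve comparison result, Corollary~\ref{coro:comp_nerf}, with $\M = \ooCat$, with $A = \cDelta$ and $i = \cO : \cDelta \to \ooCat$ the orientals functor (so that $N_i = \NS$), and with $B = \Theta$ and $j = R : \Theta \to \ooCat$ the functor of paragraph~\ref{paragr:def_R_n} (so that $N_j = \NC$). First I would verify the standing hypothesis of that corollary, namely that the presheaves $N_\cO\cO(\Deltan a) = \NS\On a$ and $N_R R(T) = \NC R(T)$ are aspherical for every $\Deltan a$ in $\cDelta$ and every $T$ in $\Theta$. For the first, the canonical \oo-functor $\On a \to \Dn0$ is the retraction of a retract by lax transformation (Proposition~\ref{prop:On_contr}), so its Street nerve is a simplicial weak equivalence by Theorem~\ref{thm:nerf_retr_trans}; as $\NS\Dn0 = \Deltan0$, this shows $\NS\On a$ is weakly contractible, hence aspherical. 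For the second, the full faithfulness of $R$ yields a natural isomorphism $\NC R(T)_S = \Hom_{\ooCat}(R(S), R(T)) \simeq \Hom_\Theta(S, T)$, so that $\NC R(T)$ is the representable presheaf on $\Theta$ at $T$, which is aspherical. By Corollary~\ref{coro:comp_nerf}, establishing the theorem (its condition~\ref{item:c}) then reduces to establishing condition~\ref{item:a}: that the composite functors $k_\cDelta\NS$ and $k_\Theta\NC$ on $\ooCat$ are isomorphic.

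I would establish this isomorphism first on each subcategory $\nCat n$, by chaining the comparisons already obtained: Remark~\ref{rem:comp_Street_n-simpl} (whose content is Theorem~\ref{thm:comp_Street_n-simpl}) gives $k_\cDelta\NS|_{\nCat n} \simeq k_{\cDelta^{\mkern-2mu n}}\Nsn n$; Theorem~\ref{thm:comp_simpl_cell} gives $k_{\cDelta^{\mkern-2mu n}}\Nsn n \simeq k_{\Theta_n}\NCn n$; and, exactly as in the proof of the lemma preceding the present statement, the asphericity of the intelligent-truncation functor $\tau_\Theta : \Theta \to \Theta_n$ (so that $\lambda_{\tau_\Theta}$ is an argument-by-argument Thomason equivalence, by Proposition~\ref{prop:caract_asph}) together with the identification $\tau_\Theta^\ast\NCn n C \simeq \NC C$ for $C$ an $n$-category gives $k_{\Theta_n}\NCn n \simeq k_\Theta\NC|_{\nCat n}$. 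One checks that these isomorphisms are natural and mutually compatible with the inclusions $\nCat n \hookto \nCat{(n+1)}$. For a general \oo-category $C$, I would then write $C = \limind_n \mathrm{sk}_n C$ as the filtered colimit along monomorphisms of its $n$-skeleta, each $\mathrm{sk}_n C$ being an $n$-category. Since the orientals and the \oo-categories $R(T)$ are finitely presentable in $\ooCat$, the functors $\NS$ and $\NC$ commute with this colimit; and since $k_\cDelta$ and $k_\Theta$ send filtered colimits along monomorphisms to homotopy colimits, I would conclude that
\[
  k_\cDelta\NS C \simeq \limind_n k_\cDelta\NS\mathrm{sk}_n C \simeq \limind_n k_\Theta\NC\mathrm{sk}_n C \simeq k_\Theta\NC C
\]
naturally in $C$, the displayed colimits being homotopy colimits. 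This is condition~\ref{item:a}, so Corollary~\ref{coro:comp_nerf} produces a zigzag $i_\cDelta\NS \otd \bullet \tod i_\Theta\NC$ of transformations that are argument-by-argument Thomason equivalences, which is the assertion of the theorem.

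The delicate point is this last passage from the $n$-categorical to the \oo-categorical case: one must check that $\NS$ and $\NC$ commute with the skeletal filtered colimit (finite presentability of the representing objects), that $k_\cDelta$ and $k_\Theta$ send such colimits to homotopy colimits, and, above all, that the $n$-categorical comparison isomorphisms are natural and compatible as $n$ varies, so that they descend to the homotopy colimit. Everything genuinely homotopical has by then already been carried out: the bisimplicial argument of Section~3 behind Theorem~\ref{thm:comp_Street_n-simpl}, and the asphericity of $m_n$ from Section~5 behind Theorem~\ref{thm:comp_simpl_cell}; what remains is the formal calculus of Grothendieck's asphericity structures packaged in Corollary~\ref{coro:comp_nerf}.
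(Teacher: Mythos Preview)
Your approach diverges from the paper's and introduces a genuine difficulty that the paper avoids entirely. You apply Corollary~\ref{coro:comp_nerf}, whose standing hypothesis (asphericity of $\NS\On{n}$ and of $\NC R(T)$) you verify correctly, but you then must establish its condition~(\emph{a}) independently, by gluing the $n$-categorical comparisons along a skeletal filtered colimit. The step you yourself flag as ``the delicate point'' --- that the isomorphisms $k_\cDelta\NS|_{\nCat{n}} \simeq k_\Theta\NC|_{\nCat{n}}$ are compatible with the inclusions $\nCat{n}\hookto\nCat{(n+1)}$ --- is a real gap: those isomorphisms in $\Hot$ arise from zigzags of length $2n-2$ (Theorem~\ref{thm:comp_Street_n-simpl}) passing through intermediate objects that change with $n$, and nothing said so far guarantees the resulting morphisms in $\Hot$ agree along the inclusions. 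Without that, the colimit argument does not produce a \emph{natural} isomorphism on $\ooCat$.

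The paper sidesteps this by applying Proposition~\ref{prop:comp_nerf} rather than its corollary. That proposition asks for \emph{four} asphericity conditions --- $\NS\On{n}$, $\NS R(T)$, $\NC\On{n}$, $\NC R(T)$ --- but yields the zigzag directly, with no colimit and no compatibility check. The lemma immediately preceding the theorem, asserting that for any $n$-category~$C$ the presheaf $\NS C$ is aspherical iff $\NC C$ is, reduces these four conditions to the two you already verified, since every $\On{n}$ and every $R(T)$ is an $n$-category for some finite $n$. The key observation is that this lemma uses the $n$-categorical comparisons (Theorems~\ref{thm:comp_Street_n-simpl} and~\ref{thm:comp_simpl_cell}) only to transfer a \emph{property} (asphericity) for one object at a time, so no coherence between different values of $n$ is ever needed.
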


\begin{proof}
  On va appliquer la proposition~\ref{prop:comp_nerf}. Soit $n \ge 0$
  et soit $S$ un objet de $\Theta$. Il nous faut montrer que les
  ensembles simpliciaux $\NS \On{n}$ et $\NS S$, et les ensembles
  cellulaires $\NC\On{n}$ et $\NC S$, sont asphériques. En vertu du lemme
  précédent, cela équivaut à montrer que $\NS\On{n}$ et~$\NC S$ sont
  asphériques. Or, l'ensemble simplicial~$\NS \On{n}$ est contractile en
  vertu de la proposition~\ref{prop:On_contr}, et est donc asphérique. Quant à
  l'ensemble cellulaire $\NC S$, le nerf cellulaire étant pleinement fidèle,
  cet ensemble cellulaire est isomorphe au préfaisceau représentable $S$, qui est
  asphérique, d'où le résultat.
\end{proof}

\begin{corollary}\label{coro:Thom_cell}
  Un \oo-foncteur $u : A \to B$ est une équivalence de Thomason si et
  seulement si son nerf cellulaire $\NC u : \NC A \to \NC B$ est une
  équivalence faible d'ensembles cellulaires, au sens du
  paragraphe~\ref{paragr:def_Thom}.
\end{corollary}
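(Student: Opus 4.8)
Le plan est de déduire cet énoncé du théorème~\ref{thm:comp_Street_cellulaire} de manière essentiellement formelle. On traduit d'abord chacune des deux conditions en une condition portant sur des catégories d'éléments, puis on les compare à l'aide du zigzag de transformations naturelles fourni par ce théorème et de la propriété de deux sur trois pour les équivalences de Thomason.

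On commencerait par dévisser les deux conditions. Par définition des équivalences de Thomason \oo-catégoriques \forlang{via} le nerf de Street, l'\oo-foncteur $u$ est une équivalence de Thomason si et seulement si $\NS u$ est une équivalence d'homotopie faible simpliciale. En vertu de la remarque du paragraphe~\ref{paragr:def_Thom} identifiant les équivalences faibles d'ensembles simpliciaux au sens de ce paragraphe et les équivalences d'homotopie faibles simpliciales, cela équivaut à demander que le foncteur $i_{\cDelta}(\NS u) : \tr{\cDelta}{\NS A} \to \tr{\cDelta}{\NS B}$ soit une équivalence de Thomason de petites catégories. De même, directement par définition, le morphisme $\NC u$ est une équivalence faible d'ensembles cellulaires si et seulement si le foncteur $i_{\Theta}(\NC u) : \tr{\Theta}{\NC A} \to \tr{\Theta}{\NC B}$ est une équivalence de Thomason. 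Il reste donc à établir que $i_{\cDelta}(\NS u)$ est une équivalence de Thomason si et seulement si $i_{\Theta}(\NC u)$ en est une.

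Pour cela, on exploiterait le zigzag du théorème~\ref{thm:comp_Street_cellulaire}. Celui-ci fournit un foncteur $\bullet : \ooCat \to \Cat$ muni d'un zigzag de transformations naturelles $i_{\cDelta}\NS \otd \bullet \tod i_{\Theta}\NC$ dont les composantes sont des équivalences de Thomason argument par argument. Évalué en $u$, ce zigzag donne un diagramme commutatif
\[
  \xymatrix@C=1.5pc{
    i_{\cDelta}\NS A \ar[d]_{i_{\cDelta}\NS u} & \bullet(A) \ar[l] \ar[r] \ar[d]^{\bullet(u)} & i_{\Theta}\NC A \ar[d]^{i_{\Theta}\NC u} \\
    i_{\cDelta}\NS B & \bullet(B) \ar[l] \ar[r] & i_{\Theta}\NC B
  }
\]
dont les flèches horizontales sont des équivalences de Thomason. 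Comme la classe des équivalences de Thomason de petites catégories est fortement saturée, elle vérifie en particulier la propriété de deux sur trois. On en déduit que $\bullet(u)$ est une équivalence de Thomason si et seulement si $i_{\cDelta}(\NS u)$ en est une, et de même si et seulement si $i_{\Theta}(\NC u)$ en est une. En combinant ces deux équivalences, on conclut.

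Il n'y a pas d'obstacle sérieux à attendre dans cette preuve : tout le contenu substantiel est concentré dans le théorème~\ref{thm:comp_Street_cellulaire}, et donc, en amont, dans les théorèmes~\ref{thm:comp_Street_n-simpl} et~\ref{thm:comp_simpl_cell}. Le seul point demandant un minimum de soin est l'identification correcte des deux notions d'équivalence faible en jeu --- simpliciale pour $\NS u$, d'ensembles cellulaires pour $\NC u$ --- avec les équivalences de Thomason des catégories d'éléments correspondantes, identification qui repose sur les remarques élémentaires du paragraphe~\ref{paragr:def_Thom}, ainsi que sur l'invocation de la propriété de deux sur trois, conséquence de la saturation forte de la classe des équivalences de Thomason.
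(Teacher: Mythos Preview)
Your proof is correct and is essentially the same approach as the paper's: the paper's one-line proof (\og cela résulte immédiatement du théorème précédent \fg) is exactly what you have unpacked, using the zigzag of Thomason equivalences provided by the théorème~\ref{thm:comp_Street_cellulaire} together with two-out-of-three. The only difference is the level of detail; the paper can be terse because the general principle --- that isomorphic composites $k_AN_i \simeq k_BN_j$ force the induced classes of weak equivalences to coincide --- was already spelled out at the end of the paragraphe~\ref{paragr:comp}.
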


\begin{proof}
  Cela résulte immédiatement du théorème précédent.
\end{proof}

\begin{remark}
  Dans \cite{Gagna}, Gagna montre que le nerf de Street $\NS : \ooCat \to
  \pref{\cDelta}$ induit une équivalence de catégories $\Ho(\ooCat) \to
  \Hot$ (voir son théorème 5.6), où $\Ho(\ooCat)$ désigne la localisation de
  la catégorie $\ooCat$ par la classe des équivalences de
  Thomason, c'est-à-dire des \oo-foncteurs dont le nerf de Street est une
  équivalence faible simpliciale. Il résulte donc de notre
  théorème~\ref{thm:comp_Street_cellulaire} que le nerf cellulaire $\NC :
  \ooCat \to \pref{\Theta}$, postcomposé par le foncteur $k_\Theta :
  \pref{\Theta} \to \Hot$, induit la même équivalence de catégories
  $\Ho(\ooCat) \to \Hot$, à isomorphisme près. De plus, la
  catégorie~$\Theta$ étant test au sens de Grothendieck (voir
  \cite[paragraphe~2.5 et exemple~5.12]{CisMaltsiTheta}), ce dernier
  foncteur induit une équivalence de catégories $\Ho(\pref{\Theta}) \to
  \Hot$, où $\Ho(\pref{\Theta})$ désigne la catégorie des ensembles
  cellulaires localisée par rapport aux équivalences faibles d'ensembles
  cellulaires, et par conséquent le nerf cellulaire induit une équivalence
  de catégories~$\Ho(\ooCat) \to \Ho(\pref{\Theta})$.

  En fait, le caractère fonctoriel de la preuve de Gagna permet d'obtenir un
  résultat plus fort : le nerf de Street induit une équivalence de $(\infty,
  1)$-catégories faibles ou de dérivateurs $\Ho(\ooCat) \to \Hot$. On en
  déduit immédiatement qu'il en est de même du nerf cellulaire.

  Par ailleurs, pour $n \ge 1$, Gagna montre que le résultat analogue pour
  la restriction \smash{$\nCat{n} \hookto \ooCat \xto{\NS} \pref{\cDelta}$} du nerf
  de Street à $\nCat{n}$ est également vrai (voir toujours son théorème 5.6). On
  obtient ainsi, en vertu de nos théorèmes \ref{thm:comp_Street_n-simpl} et
  \ref{thm:comp_simpl_cell} (voir également la
  remarque~\ref{rem:comp_Street_n-simpl}), que les foncteurs nerf de Street,
  nerf $n$-simplicial et nerf $n$-cellulaire,
  \[
    \nCat{n} \hookto \ooCat \xto{\NS} \pref{\cDelta},
    \quad
    \Nsn{n} : \nCat{n} \to \pref{\cDelta^n},
    \quad
    \NCn{n} : \nCat{n} \to \pref{\Theta_n},
  \]
  induisent tous les trois la même équivalence de $(\infty,
  1)$-catégories faibles ou de dérivateurs~$\Ho(\nCat{n}) \to \Hot$.
\end{remark}

\section{Deux applications}

\begin{paragraph}
  On dira qu'un \oo-foncteur $u : C \to D$ est une \ndef{équivalence de
  Thomason} si son nerf de Street $Nu : NC \to ND$ est une équivalence
  faible simpliciale.
\end{paragraph}

\begin{theorem}
  Soit $u : C \to D$ un \oo-foncteur. On suppose que
  \begin{enumerate}
    \item $u$ induit une bijection $\Ob(C) \xto{\sim} \Ob(D)$ ;
    \item pour tous objets $c$ et $c'$ de $C$, le \oo-foncteur
        \[ \Homi_C(c, c') \to \Homi_D(u(c), u(c')) \]
        induit par $u$ est une équivalence de Thomason.
  \end{enumerate}
  Alors $u$ est une équivalence de Thomason.
\end{theorem}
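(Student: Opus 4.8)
Le plan est de ramener l'énoncé au résultat bisimplicial de la troisième section. D'après la proposition~\ref{prop:Street_bisimpl}, on dispose, pour toute \oo-catégorie $E$, d'un zigzag naturel d'équivalences faibles simpliciales entre $\NS E$ et $\delta^\ast \NS SE$. En appliquant ce zigzag à $C$ et à $D$ et en invoquant sa naturalité, on obtient, pour le \oo-foncteur $u$, un diagramme commutatif dont les flèches horizontales sont des équivalences faibles simpliciales et dont les deux flèches verticales extrêmes sont $\NS u$ et $\delta^\ast \NS Su$, où $Su : SC \to SD$ désigne le morphisme d'objets simpliciaux dans $\ooCat$ induit par $u$. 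La classe des équivalences faibles simpliciales étant (fortement) saturée, il suffit donc de montrer que $\delta^\ast \NS Su$ est une équivalence faible simpliciale.

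En vertu du lemme bisimplicial appliqué au morphisme d'ensembles bisimpliciaux $\NS Su$, en fixant la première variable simpliciale, il suffit de vérifier que, pour tout $p \ge 0$, le morphisme d'ensembles simpliciaux
\[ \NS(S_p u) : \NS(S_p C) \longrightarrow \NS(S_p D) \]
est une équivalence faible. Or, le corollaire~\ref{coro:desc_SC}, dont l'isomorphisme est naturel en $C$, identifie $S_p u$ au coproduit, indexé par les $(p+1)$-uplets $(c_0, \dots, c_p)$ d'objets de $C$, des \oo-foncteurs
\[ \Homi_C(c_0, c_1) \times \cdots \times \Homi_C(c_{p-1}, c_p) \longrightarrow \Homi_D(u c_0, u c_1) \times \cdots \times \Homi_D(u c_{p-1}, u c_p) ; \]
on utilise ici l'hypothèse (a), à savoir la bijectivité de $u$ sur les objets, pour identifier les ensembles d'indexation $\Ob(C)^{p+1}$ et $\Ob(D)^{p+1}$ et ainsi s'assurer que chaque composante de $S_p D$ est atteinte.

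Pour conclure, on observe que le nerf de Street $\NS$ commute aux coproduits — chaque oriental $\On{q}$ étant connexe, tout \oo-foncteur de $\On{q}$ vers un coproduit se factorise par un unique facteur — et qu'il commute aux produits finis, étant un adjoint à droite. Par conséquent, $\NS(S_p u)$ est le coproduit, indexé par les uplets $(c_0, \dots, c_p)$, des produits $\prod_{i=1}^{p} \NS\big(\Homi_C(c_{i-1}, c_i) \to \Homi_D(u c_{i-1}, u c_i)\big)$. Chacun de ces facteurs est une équivalence faible simpliciale en vertu de l'hypothèse (b) ; comme un produit fini d'équivalences faibles simpliciales en est une, de même qu'un coproduit quelconque, on en déduit que $\NS(S_p u)$ est une équivalence faible simpliciale pour tout $p$. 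Compte tenu des réductions des deux paragraphes précédents, cela montre que $\NS u$ est une équivalence faible simpliciale, c'est-à-dire que $u$ est une équivalence de Thomason.

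Le cœur du travail ayant déjà été accompli dans la troisième section \emph{via} le calcul bisimplicial, cet argument est pour l'essentiel de nature comptable ; le seul point à surveiller est que tous les isomorphismes utilisés — ceux de la proposition~\ref{prop:Street_bisimpl} et du corollaire~\ref{coro:desc_SC} — soient naturels en $C$ relativement à des \oo-foncteurs arbitraires, ce qui est précisément ce qu'affirment ces énoncés, de sorte que la réduction à l'étude des $S_p u$ et l'argument final de saturation sont légitimes.
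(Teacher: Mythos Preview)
Ta démonstration est correcte et suit exactement la même stratégie que celle de l'article : réduction à $\delta^\ast \NS Su$ \emph{via} la proposition~\ref{prop:Street_bisimpl}, puis application du lemme bisimplicial en utilisant la description explicite de $S_pC$ donnée par le corollaire~\ref{coro:desc_SC}, la connexité des orientaux et la stabilité des équivalences faibles par produits finis et coproduits. Tu as simplement détaillé plus soigneusement certains points que l'article laisse implicites, notamment la commutation de $\NS$ aux coproduits et aux produits finis.
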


\begin{proof}
  Considérons le morphisme $Su : SC \to SD$ d'objets simpliciaux dans
  $\ooCat$, où $S$ est la construction du paragraphe~\ref{paragr:def_SC}. En
  vertu de la description explicite de cette construction donnée par le
  corollaire~\ref{coro:desc_SC}, la stabilité des équivalences de Thomason
  par produit fini et somme (par connexité des orientaux) entraîne que ce
  morphisme est une équivalence de Thomason argument par argument. Autrement
  dit, le morphisme bisimplicial $\NS Su : \NS SC \to \NS SD$ est une
  équivalence faible lorsque l'on fixe le premier argument et le lemme
  bisimplicial implique donc que $\delta^\ast \NS Su$ est une équivalence
  faible simpliciale. Or, en vertu de la
  proposition~\ref{prop:Street_bisimpl}, le nerf de Street~$\NS u$ est une
  équivalence faible si et seulement si $\delta^\ast \NS Su$ est une
  équivalence faible, d'où le \nohyphen{résultat}.
\end{proof}

\begin{corollary}
  Soit $u : C \to D$ un \oo-foncteur et soit $n \ge 0$. On suppose que
  \begin{enumerate}
    \item pour tout $i$ tel que $0 \le i \le n$, le \oo-foncteur $u$ induit
    une bijection $C_i \to D_i$ ;
    \item pour tout couple $x$ et $y$ de $n$-cellules parallèles de $C$, le \oo-foncteur
        \[ \Homi_C(x, y) \to \Homi_D(u(x), u(y)) \]
        induit par $u$ est une équivalence de Thomason.
  \end{enumerate}
  Alors $u$ est une équivalence de Thomason.
\end{corollary}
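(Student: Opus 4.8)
On procédera par récurrence sur $n$. Le cas $n = 0$ n'est autre que le théorème précédent : deux $0$-cellules étant toujours parallèles, l'hypothèse $(b)$ pour $n = 0$ signifie exactement que, pour tous objets $c$ et $c'$ de $C$, le \oo-foncteur $\Homi_C(c, c') \to \Homi_D(u(c), u(c'))$ est une équivalence de Thomason. On supposera donc $n \ge 1$ et le corollaire acquis pour l'entier $n - 1$.

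Pour $c$ et $c'$ deux objets de $C$, on notera $u_{c,c'} : \Homi_C(c, c') \to \Homi_D(u(c), u(c'))$ le \oo-foncteur induit par $u$. L'étape principale consistera à vérifier que $u_{c,c'}$ satisfait aux hypothèses du corollaire pour l'entier $n - 1$ ; l'hypothèse de récurrence entraînera alors que $u_{c,c'}$ est une équivalence de Thomason, et ce pour tout couple d'objets $c$, $c'$ de $C$. Pour l'hypothèse $(a)$ : si $0 \le i \le n - 1$, une $i$-cellule de $\Homi_C(c, c')$ n'est autre qu'une $(i+1)$-cellule de $C$ de $0$-source $c$ et de $0$-but $c'$ ; comme $u$ induit une bijection $\Ob(C) \xto{\sim} \Ob(D)$ ainsi qu'une bijection $C_{i+1} \xto{\sim} D_{i+1}$ (car $1 \le i + 1 \le n$), compatibles aux $0$-source et $0$-but, on en déduira que $u_{c,c'}$ induit une bijection sur les $i$-cellules. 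Pour l'hypothèse $(b)$ : soient $x$ et $y$ deux $(n-1)$-cellules parallèles de $\Homi_C(c, c')$ ; ce sont aussi bien deux $n$-cellules parallèles de $C$, de $0$-source $c$ et de $0$-but $c'$. Via l'identification canonique et fonctorielle $\Homi_{\Homi_C(c,c')}(x, y) \simeq \Homi_C(x, y)$, ainsi que l'identification analogue dans $D$, le \oo-foncteur induit $\Homi_{\Homi_C(c,c')}(x, y) \to \Homi_{\Homi_D(u(c),u(c'))}(u_{c,c'}(x), u_{c,c'}(y))$ s'identifiera au \oo-foncteur $\Homi_C(x, y) \to \Homi_D(u(x), u(y))$, lequel est une équivalence de Thomason d'après l'hypothèse $(b)$ du corollaire à démontrer, appliquée aux $n$-cellules parallèles $x$ et $y$ de $C$.

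Il restera alors à observer que $u$ vérifie les hypothèses du théorème précédent : il induit une bijection $\Ob(C) \xto{\sim} \Ob(D)$ (c'est l'hypothèse $(a)$ dans le cas $i = 0$) et, pour tous objets $c$ et $c'$ de $C$, le \oo-foncteur $\Homi_C(c, c') \to \Homi_D(u(c), u(c'))$ est une équivalence de Thomason d'après ce qui précède. On conclura que $u$ est une équivalence de Thomason, ce qui achèvera la récurrence.

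Le point méritant un peu de soin sera le jeu sur les dimensions : il faudra s'assurer qu'un couple de $n$-cellules parallèles de $C$ équivaut à la donnée d'un couple de $(n-1)$-cellules parallèles d'un $\Homi_C(c, c')$, et que les deux tours de $\Homi$ en jeu coïncident comme \oo-catégories. Ces deux faits résultent formellement de la description de $\Homi_C({-},{-})$ rappelée dans les notations, la dimension étant décalée de un à chaque passage au $\Homi$ ; aucune difficulté conceptuelle n'est donc à attendre.
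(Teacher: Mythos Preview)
Your proof is correct and follows essentially the same route as the paper: induction on $n$, with the case $n=0$ being the preceding theorem, and the inductive step consisting in checking that each $u_{c,c'}$ satisfies the hypotheses of the corollary at rank $n-1$ (via the dimension shift $\Homi_C(c,c')_i \subset C_{i+1}$ and the identification $\Homi_{\Homi_C(c,c')}(x,y) \simeq \Homi_C(x,y)$), then concluding by the theorem. The only difference is presentational: the paper states the reduction to the theorem first and the verification second, while you present them in the opposite order.
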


\begin{proof}
  Le théorème précédent est précisément le cas $n = 0$ de cet énoncé. Le cas
  général s'en déduit par récurrence. En effet, si $n > 0$, pour conclure,
  en appliquant le théorème précédent, il suffit de montrer que, pour tous
  objets~$c$ et~$c'$ de~$C$, le \oo-foncteur $\Homi_C(c, c') \to \Homi_D(u(c),
  u(c'))$ est une équivalence de Thomason. Or, l'hypothèse sur $u$ entraîne
  immédiatement que l'application $\Homi_C(c, c')_i \to \Homi_D(u(c),
  u(c'))_i$, qui est induite par l'application $C_{i+1} \to D_{i+1}$, est
  bijective pour $0 \le i \le n - 1$, ainsi
  que le fait que, pour $x$ et $y$ deux $(n-1)$-cellules parallèles de
  $\Homi_C(c, c')$, le \oo-foncteur
  \[  \Homi_{\Homi_C(c, c')}(x, y) \to \Homi_{\Homi_D(u(c), u(c'))}(u(x),
  u(y)),
  \]
  qui n'est autre que le \oo-foncteur $\Homi_C(x, y) \to \Homi_D(u(x),
  u(y))$, est une équivalence de Thomason. On conclut donc en appliquant
  l'hypothèse de récurrence au rang~$n - 1$.
\end{proof}

\begin{paragraph}
  Soit $J$ un sous-ensemble de $\N\sauf\{0\}$. Si $C$ est une \oo-catégorie,
  on notera $D_J(C)$ la \oo-catégorie obtenue à partir de $C$ en inversant
  l'orientation des $j$-cellules pour $j$ dans $J$. Cette construction
  définit un foncteur $D_J : \ooCat \to \ooCat$ qui est un automorphisme
  involutif de $\ooCat$.
\end{paragraph}

\begin{lemma}
  Soit $J$ une partie de $\N\sauf\{0\}$. Alors, pour tout objet $S$ de
  $\Theta$, la \oo-catégorie $D_J(S)$ est dans $\Theta$.
\end{lemma}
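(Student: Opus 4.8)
Le plan est de raisonner par récurrence sur un entier $n$ tel que $S$ appartienne à $\Theta_n$, en montrant que $D_J(R_n(S))$ appartient à $\Theta_n$, donc à $\Theta$. Comme $R_n(S)$ est une $n$-catégorie, on peut supposer $J \subseteq \{1, \dots, n\}$ ; le cas $n = 0$ est trivial. Pour l'étape de récurrence, on écrit $S = (\Deltan{p}; T_1, \dots, T_p)$, où $T_1, \dots, T_p$ sont des objets de $\Theta_{n-1}$, de sorte que, en vertu du paragraphe~\ref{paragr:def_R_n}, $R_n(S)$ s'identifie à la \oo-catégorie $\Deltan{p} \wr (R_{n-1}(T_1), \dots, R_{n-1}(T_p))$, limite inductive du diagramme du paragraphe~\ref{paragr:constr_W} formé des $\Sigma'(R_{n-1}(T_i))$ recollés le long de copies de $\Dn{0}$. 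Le foncteur $D_J$ étant un automorphisme involutif de $\ooCat$, c'est une équivalence ; il commute donc aux limites inductives, et il suffit de comprendre son action sur $\Dn{0}$ (laissé fixe) et sur les \oo-catégories de la forme $\Sigma'C$.

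On établirait alors, en déroulant les définitions de $\Sigma'$ (paragraphe~\ref{paragr:constr_W}) et des dualités, le point suivant : lorsque $1 \notin J$, on a un isomorphisme canonique $D_J(\Sigma'C) \simeq \Sigma'(D_{J-1}(C))$, où $J - 1 = \{j - 1 \mid j \in J\}$, compatible avec les deux \oo-foncteurs $\Dn{0} \to \Sigma'C$ pointant les objets $0$ et $1$ ; et la \oo-catégorie $D_{\{1\}}(\Sigma'C)$ est la \oo-catégorie $\Sigma'C$ dont on a échangé les deux objets. Le premier énoncé résulte de ce que, pour $j \ge 2$, les $j$-cellules de $\Sigma'C$ s'identifient aux $(j-1)$-cellules de $C$, et de ce que, $1$ n'appartenant pas à $J$, la source et le but des $1$-cellules sont préservés par $D_J$ ; le second, de ce que $D_{\{1\}}$ ne modifie que les $1$-cellules, échangeant donc la source et le but de l'unique $1$-cellule non triviale de $\Sigma'C$.

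En injectant ces descriptions dans la limite inductive définissant $\Deltan{p} \wr (C_1, \dots, C_p)$, on obtiendrait, pour $1 \notin J$, un isomorphisme
\[
  D_J\bigl(\Deltan{p} \wr (C_1, \dots, C_p)\bigr) \simeq \Deltan{p} \wr \bigl(D_{J-1}(C_1), \dots, D_{J-1}(C_p)\bigr),
\]
d'où $D_J(R_n(S)) \simeq R_n\bigl((\Deltan{p}; D_{J-1}(T_1), \dots, D_{J-1}(T_p))\bigr)$, qui appartient à $\Theta_n$ par l'hypothèse de récurrence appliquée à chaque $T_i$, puisque $J - 1 \subseteq \{1, \dots, n-1\}$. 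Lorsque $J = \{1\}$, échanger les deux objets de chaque facteur $\Sigma'$ du diagramme revient à le parcourir en sens inverse, ce qui fournit un isomorphisme
\[ D_{\{1\}}\bigl(\Deltan{p} \wr (C_1, \dots, C_p)\bigr) \simeq \Deltan{p} \wr (C_p, \dots, C_1), \]
et donc $D_{\{1\}}(R_n(S)) \simeq R_n\bigl((\Deltan{p}; T_p, \dots, T_1)\bigr) \in \Theta_n$. Enfin, pour $J$ quelconque contenant $1$, on écrit $J = \{1\} \cup K$ avec $1 \notin K$, puis $D_J = D_K \circ D_{\{1\}}$, et l'on conclut en appliquant le premier cas à la \oo-catégorie $D_{\{1\}}(R_n(S))$, qui est dans $\Theta_n$.

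L'obstacle principal sera la vérification du point clé concernant $D_J(\Sigma'C)$ et sa compatibilité avec les \oo-foncteurs de recollement $\Dn{0} \to \Sigma'C$ ; cette vérification demande de suivre soigneusement les définitions de $\Sigma'$, du produit en couronne et des dualités. Une fois ce point acquis, tout le reste est formel, puisque $D_J$ est une équivalence commutant aux limites inductives.
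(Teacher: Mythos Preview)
Votre démonstration est correcte et suit essentiellement la même approche que celle de l'article : récurrence sur $n$, décomposition de $R_n(S)$ en une limite inductive de $\Sigma'T_i$ recollés le long de $\Dn{0}$, et description de l'action de $D_J$ sur $\Sigma'C$. La seule différence, purement organisationnelle, est que l'article traite les cas $1 \in J$ et $1 \notin J$ de manière unifiée en posant $K = \{j - 1 \mid j \in J,\, j > 1\}$ et en combinant directement les deux effets sur $\Sigma'C$, alors que vous isolez d'abord le cas $J = \{1\}$ puis ramenez le cas général $1 \in J$ au cas $1 \notin K$ via la factorisation $D_J = D_K \circ D_{\{1\}}$.
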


\begin{proof}
  Soit $S$ un objet de $\Theta_n$. On va démontrer le résultat par
  récurrence sur $n$. Si $n = 0$, alors $S = \Dn{0}$ et $D_J(\Dn{0}) =
  \Dn{0}$. Supposons $n \ge 1$. En vertu des
  paragraphes~\ref{paragr:constr_W} et~\ref{paragr:def_R_n}, la
  $n$-catégorie $S$ est la limite inductive d'un diagramme
  \[
    \xymatrix@C=1pc{
      \Sigma'T_n & & \Sigma'T_{n-1} & & \cdots & & \Sigma'T_1 \\
                   & \Dn{0} \ar[lu]^{0} \ar[ru]_{1} & & \Dn{0} \ar[lu]^{0}
                   & \cdots & \Dn{0} \ar[ur]_1 & \bpbox{,}\\
    }
  \]
  où $T_1, \dots, T_n$ sont des objets de $\Theta_{n-1}$, ce qu'on écrira
  \[
    S \simeq \Sigma'T_n \amalg_{\Dn{0}} \dots \amalg_{\Dn{0}} \Sigma'T_1.
  \]
  Si $J'$ est une partie de $\N \sauf \{0, 1\}$, il est immédiat que pour
  toute \oo-catégorie $C$, on a
  \[ D_{J'}(\Sigma'C) = \Sigma'(D_{J'-1}(C)), \]
  où $J' - 1 = \{ j - 1 \mid j \in J'\}$. Par ailleurs, on a
  \[ D_{\{1\}}(\Sigma' C) \simeq \Sigma' C, \]
  mais les \oo-foncteurs $0 , 1 : \Dn{0} \to \Sigma' C$ sont échangés à
  travers cet isomorphisme.
  Ainsi, en posant $K = \{j - 1 \mid j \in J, \, j > 1\}$ et
  en utilisant le fait que $D_J$ commute aux limites inductives, on obtient,
  si $1$ n'appartient pas à $J$,
  \[
    D_{J}(S) \simeq \Sigma'(D_K(T_n)) \amalg_{\Dn{0}} \dots \amalg_{\Dn{0}}
    \Sigma'(D_K(T_1))
  \]
  et, si $1$ appartient à $J$,
  \[
    D_J(S) \simeq \Sigma'(D_K(T_1)) \amalg_{\Dn{0}} \dots \amalg_{\Dn{0}}
    \Sigma'(D_K(T_n)),
  \]
  d'où le résultat par récurrence.
\end{proof}

\begin{theorem}\label{thm:dual_Thomason}
  Soit $u : C \to D$ un \oo-foncteur et soit $J$ une partie de
  $\N\sauf\{0\}$. Alors $u$ est une équivalence de Thomason si et seulement
  si $D_J(u)$ en est une.
\end{theorem}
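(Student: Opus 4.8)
Le plan est de ramener l'énoncé au nerf cellulaire \emph{via} le corollaire~\ref{coro:Thom_cell}, qui caractérise les équivalences de Thomason comme les \oo-foncteurs dont le nerf cellulaire est une équivalence faible d'ensembles cellulaires, en exploitant le fait, établi par le lemme précédent, que la dualité $D_J$ stabilise la sous-catégorie $\Theta$ de $\ooCat$. Comme $D_J$ est un automorphisme involutif de $\ooCat$, il suffit d'établir que si $u$ est une équivalence de Thomason alors $D_J(u)$ en est une : la réciproque s'en déduit en appliquant ce cas au \oo-foncteur $D_J(u)$, puisque $D_J \circ D_J$ est l'identité de $\ooCat$.

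D'abord, j'observerais que, $D_J$ envoyant tout objet de $\Theta$ sur un objet de $\Theta$ (lemme précédent) et le foncteur $R : \Theta \to \ooCat$ étant pleinement fidèle, le foncteur $D_J$ se restreint en un automorphisme involutif de catégories $d_J : \Theta \to \Theta$ vérifiant $D_J \circ R = R \circ d_J$. On en tire un isomorphisme canonique de foncteurs $\NC \circ D_J \simeq d_J^\ast \circ \NC$ : pour toute \oo-catégorie $C$ et tout objet $T$ de $\Theta$, l'automorphisme involutif $D_J$ fournit une bijection
\[
  \begin{split}
    \NC(D_J C)_T & = \Hom_{\ooCat}(R(T), D_J C)
    \simeq \Hom_{\ooCat}(D_J R(T), C) \\
    & = \Hom_{\ooCat}(R(d_J T), C) = \NC(C)_{d_J T} = (d_J^\ast \NC C)_T,
  \end{split}
\]
naturelle en $T$ et en $C$.

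Ensuite, je vérifierais que le foncteur de précomposition $d_J^\ast : \pref{\Theta} \to \pref{\Theta}$ préserve et reflète les équivalences faibles d'ensembles cellulaires, au sens du paragraphe~\ref{paragr:def_Thom}. Cela résulte de ce que $d_J$ est un isomorphisme de catégories : le foncteur canonique $\tr{\Theta}{d_J^\ast F} \to \tr{\Theta}{F}$, $(T, f) \mapsto (d_J T, f)$, est alors un isomorphisme de catégories naturel en $F$, de sorte que, pour tout morphisme d'ensembles cellulaires $\phi$, le foncteur $i_\Theta(d_J^\ast \phi)$ se déduit de $i_\Theta(\phi)$ par conjugaison par ces isomorphismes ; l'un est donc une équivalence de Thomason si et seulement si l'autre en est une. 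En combinant ceci avec l'isomorphisme $\NC(D_J u) \simeq d_J^\ast(\NC u)$ de l'étape précédente et le corollaire~\ref{coro:Thom_cell}, on conclut : $D_J(u)$ est une équivalence de Thomason si et seulement si $\NC(D_J u)$ est une équivalence faible d'ensembles cellulaires, si et seulement si $\NC u$ en est une, si et seulement si $u$ est une équivalence de Thomason.

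Je ne m'attends pas à une réelle difficulté : tout le contenu non formel est concentré, d'une part, dans le lemme précédent affirmant que $D_J$ stabilise $\Theta$ et, d'autre part, dans le corollaire~\ref{coro:Thom_cell} comparant les nerfs de Street et cellulaire. Le point à garder à l'esprit est que ce raisonnement est propre au nerf cellulaire : il échoue pour le nerf de Street, la dualité $D_J$ ne laissant pas stable l'objet cosimplicial des orientaux $\cO : \cDelta \to \ooCat$ — c'est d'ailleurs précisément pour cette raison qu'il était nécessaire d'établir au préalable la comparaison du corollaire~\ref{coro:Thom_cell}.
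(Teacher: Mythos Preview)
Your proof is correct and follows essentially the same route as the paper's: reduce to the cellular nerve via Corollary~\ref{coro:Thom_cell}, use the preceding lemma to obtain an automorphism $d_J$ of $\Theta$, identify $\NC \circ D_J$ with $d_J^\ast \circ \NC$, and conclude. The only cosmetic difference is in the last step: the paper packages the conclusion through Proposition~\ref{prop:comp_nerf_asp} (observing that $d_J$, being an isomorphism, is aspherical), whereas you argue directly that $\lambda_{d_J} : \tr{\Theta}{d_J^\ast F} \to \tr{\Theta}{F}$ is an isomorphism of categories; these are the same observation.
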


\begin{proof}
  En vertu du corollaire~\ref{coro:Thom_cell}, il suffit de montrer que $\NC
  u$ est une équivalence faible cellulaire si et seulement si $\NC D_J(u)$ en
  est une. D'après le lemme
  précédent, la dualité~$D_J$ induit un automorphisme de $\Theta$, qu'on
  notera $d_J$. Le foncteur~$d_J$ étant un isomorphisme, il est asphérique
  et on peut donc appliquer la proposition~\ref{prop:comp_nerf_asp} au
  triangle commutatif
  \[
    \xymatrix@C=1.5pc{
      \Theta \ar[dr]_{d_JR} \ar[rr]^{d_J} & & \Theta \ar[dl]^R \\
      & \ooCat & \pbox{,}
    }
  \]
  où $R$ est le foncteur induisant le nerf cellulaire. Or, le foncteur
  nerf $N_{Rd_J}$ associé à~$Rd_J$ n'est autre que $\NC D_J$ puisque, si $A$
  est une \oo-catégorie et $S$ un objet de $\Theta$, on~a
  \[
    \begin{split}
    N_{Rd_J}(A)_S
    & = \Hom_{\ooCat}(Rd_J(S), A)
    = \Hom_{\ooCat}(D_JR(S), A) \\
    & \simeq \Hom_{\ooCat}(R(S), D_J(A))
    = \NC(D_J(A))_S.
    \end{split}
  \]
  La proposition~\ref{prop:comp_nerf_asp} entraîne ainsi que les foncteurs
  $k_\Theta \NC$ et $k_\Theta \NC D_J$ sont isomorphes et donc le résultat.
\end{proof}

\begin{corollary}
  Soit $J$ une partie de $\N\sauf\{0\}$. Le foncteur
  \[
    \ooCat \xto{D_J} \ooCat \xto{\NS} \pref{\cDelta} \xto{k_{\cDelta}} \Hot
  \]
  est isomorphe au foncteur
  \[
    \ooCat \xto{\NS} \pref{\cDelta} \xto{k_{\cDelta}} \Hot \pbox{.}
  \]
  Plus précisément, il existe un zigzag de transformations naturelles de la
  forme
    \[ i_{\cDelta}\NS D_J \tod \bullet \otd i_{\cDelta}\NS \]
  qui sont des équivalences de Thomason argument par argument.

  En particulier, si $C$ est une \oo-catégorie, alors les ensembles
  simpliciaux $\NS C$ et~$\NS(D_J(C))$ ont même type d'homotopie.
\end{corollary}

\begin{proof}
  Il est immédiat que le foncteur $ND_J$ est le foncteur nerf associé à
  l'objet cosimplicial
  \[
    \cO_J : \cDelta \xto{\cO} \ooCat \xto{D_J} \ooCat \pbox{.}
  \]
  Ainsi, le théorème précédent affirme précisément que les foncteurs nerf
  associés aux objets cosimpliciaux $\cO$ et $\cO_J$ définissent les mêmes
  équivalences faibles sur $\ooCat$ au sens de la condition~\ref{item:b} du
  corollaire~\ref{coro:comp_nerf}. Le résultat est donc conséquence de ce
  même corollaire.
\end{proof}

% \begin{remark}
%   Si $J$ est une partie de $\N\sauf\{0\}$, on peut considérer l'objet
%   cosimplicial
%   \[
%     \cO_J : \cDelta \xto{\cO} \ooCat \xto{D_J} \ooCat \pbox{.}
%   \]
%   On vérifie comme dans la preuve précédente que le foncteur nerf
%   $\NS_J$ associé à cet objet cosimplicial est le composé $\NS D_J$. Le
%   théorème précédent affirme donc précisément que les foncteurs nerf $\NS$ et
%   $\NS_J$ définissent les mêmes équivalences faibles sur $\ooCat$ au sens de
%   la condition~\ref{item:b} du corollaire~\ref{coro:comp_nerf} et il résulte
%   de ce même corollaire que les foncteurs
%   \[
%     \ooCat \xto{\NS} \pref{\cDelta} \xto{k_{\cDelta}} \Hot
%     \quadet
%     \ooCat \xto{\NS_J} \pref{\cDelta} \xto{k_{\cDelta}} \Hot
%   \]
%   sont isomorphes et, plus précisément, qu'il existe un zigzag de
%   transformations naturelles de la forme
%     \[ i_{\cDelta}\NS \tod \bullet \otd i_{\cDelta}\NS_J \]
%   qui sont des équivalences de Thomason argument par argument.
% \end{remark}

Dans \cite{AraMaltsiThmAI} et \cite{AraMaltsiThmAII}, nous avons établi un
théorème A de Quillen \oo-catégorique pour les tranches au-dessous. (On
renvoie à la section 6 de \cite{AraMaltsiJoint} pour la définition des
tranches et en particulier au paragraphe 6.31 et à la remarque 6.37.) Les
tranches au-dessus s'obtenant par dualité à partir des tranches au-dessous,
on en déduit, en utilisant le théorème précédent, un théorème A pour les
tranches au-dessus :

\begin{coro}
  Soit
  \[
    \shorthandoff{;}
    \xymatrix@C=1.5pc{
      A \ar[rr]^u \ar[dr]_(0.40){v}_(.60){}="f" & & B \ar[dl]^(0.40){w} \\
      & C
      \ar@{}"f";[ur]_(.15){}="ff"
      \ar@{}"f";[ur]_(.55){}="oo"
      \ar@<-0.0ex>@2"oo";"ff"_\alpha
      &
    }
  \]
  un triangle de \oo-foncteurs commutatif à une transformation oplax $\alpha$
  près. Si pour tout objet $c$ de $C$, le \oo-foncteur canonique
  \smash{$\tr{(u, \alpha)}{c} : \tr{A}{c} \to \tr{B}{c}$} est une
  équivalence de Thomason, alors il en est de même de $u$.
\end{coro}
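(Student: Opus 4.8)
Le plan est de déduire cet énoncé du théorème~A de Quillen \oo-catégorique pour les « tranches au-dessous » établi dans \cite{AraMaltsiThmAI} et \cite{AraMaltsiThmAII}, en le combinant avec la stabilité des équivalences de Thomason par les dualités $D_J$, c'est-à-dire avec le théorème~\ref{thm:dual_Thomason}. Le point de départ est que les « tranches au-dessus » s'obtiennent à partir des « tranches au-dessous » par une dualité convenable : il existe une partie $J$ de $\N\sauf\{0\}$ telle que, pour tout \oo-foncteur $v : A \to C$ et tout objet $c$ de $C$, on ait un isomorphisme canonique $D_J\big(\tr{A}{c}\big) \simeq \cotr{D_J(A)}{c}$, l'objet $c$ restant inchangé puisque $0 \notin J$, et telle que $D_J$ transforme une transformation oplax en une transformation du type intervenant dans la version « au-dessous » du théorème~A. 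De plus, cette identification est compatible avec les \oo-foncteurs de comparaison canoniques, de sorte que $D_J\big(\tr{(u,\alpha)}{c}\big)$ s'identifie canoniquement au \oo-foncteur $\cotr{(D_J(u), D_J(\alpha))}{c} : \cotr{D_J(A)}{c} \to \cotr{D_J(B)}{c}$. Ces compatibilités devront être justifiées à partir du comportement du joint $\joint$ et des constructions de tranches vis-à-vis des dualités, étudié dans la section~6 de \cite{AraMaltsiJoint}.

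Admettant ce point, l'argument se réduira à une double application du théorème~\ref{thm:dual_Thomason}. En appliquant $D_J$ au triangle de l'énoncé, on obtiendra un triangle $D_J(A) \to D_J(B)$ au-dessus de $D_J(C)$, commutatif à la transformation $D_J(\alpha)$ près. Pour tout objet $c$ de $C$, le \oo-foncteur $\cotr{(D_J(u), D_J(\alpha))}{c}$ étant isomorphe à $D_J\big(\tr{(u,\alpha)}{c}\big)$, et $\tr{(u,\alpha)}{c}$ étant une équivalence de Thomason par hypothèse, le théorème~\ref{thm:dual_Thomason} entraînera qu'il en est de même de $\cotr{(D_J(u), D_J(\alpha))}{c}$. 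Le théorème~A pour les « tranches au-dessous », appliqué au triangle dualisé, donnera alors que $D_J(u)$ est une équivalence de Thomason, et une dernière application du théorème~\ref{thm:dual_Thomason} fournira que $u$ en est une.

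L'étape qui demandera un soin réel est la première : déterminer la partie $J$ adéquate et vérifier que $D_J$ échange effectivement les deux constructions de tranches, ainsi que, simultanément, les versions lax et oplax de la transformation structurale, le tout de façon compatible avec les \oo-foncteurs de comparaison $\tr{(u,\alpha)}{c}$ et $\cotr{(u,\beta)}{c}$. On prendra garde au fait que ce détour par le théorème~\ref{thm:dual_Thomason} est essentiel et que le résultat ne saurait s'obtenir en dualisant simplement la preuve du théorème~A « au-dessous » : cette preuve repose sur le nerf de Street, dont les propriétés de symétrie manifestes sont trop faibles, et la symétrie dont on a besoin est précisément l'invariance par dualité de la classe des équivalences de Thomason, qui est le contenu du théorème~\ref{thm:dual_Thomason}.
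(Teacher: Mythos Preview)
Your proposal is correct and follows essentially the same route as the paper: the paper takes $J = \N\sauf\{0\}$ (the total dual $D^\o$), uses \cite[proposition~A.22]{AraMaltsiJoint} to see that $\alpha^\o$ is again an oplax transformation (with reversed direction), invokes \cite[remarque~6.37]{AraMaltsiJoint} for the isomorphism $(\tr{D}{c})^\o \simeq \cotr{D^\o}{c}$ and its compatibility with the comparison \oo-foncteurs, then applies \cite[théorème~7.8]{AraMaltsiThmAII} and the théorème~\ref{thm:dual_Thomason} exactly as you outline. The only thing you left unspecified is the choice of $J$ and the precise references for the slice/duality compatibilities; otherwise your argument matches the paper's.
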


\begin{proof}
  Pour $D$ une \oo-catégorie, on notera $D^\o$ le dual total de $D$,
  c'est-à-dire la \oo-catégorie $D_{\N \sauf \{0\}}(C)$. Si $\beta : \Dn{1}
  \otimes D \to E$ est une transformation oplax d'un \oo-foncteur $f : D \to
  E$ vers un \oo-foncteur $g : D \to E$, alors, en vertu de l'isomorphisme
  canonique $(\Dn{1} \otimes D)^\o \simeq \Dn{1}^\o \otimes D^\o$ (voir
  \cite[proposition~A.22]{AraMaltsiJoint}), le \oo-foncteur~$\beta^\o$
  définit une transformation oplax de $g^\o : D^\o \to E^\o$ vers $f^\o :
  D^\o \to E^\o$. Ainsi, en appliquant la dualité totale au
  triangle de l'énoncé, on obtient un triangle
  \[
    \shorthandoff{;}
    \xymatrix@C=1.5pc{
      A^\o \ar[rr]^{u^\o} \ar[dr]_(0.40){v^\o}_(.60){}="g" & & B^\o
      \ar[dl]^(0.40){w^\o} \\
      & C^\o
      \ar@{}"g";[ur]_(.20){}="gg"
      \ar@{}"g";[ur]_(.50){}="oo"
      \ar@<-0.0ex>@2"gg";"oo"^{\alpha^\o}
      & \pbox{,}
    }
  \]
  commutant à une transformation oplax près. Par ailleurs, en vertu de
  \cite[remarque~6.37]{AraMaltsiJoint}, si $D$ est une \oo-catégorie
  au-dessus de $C$, alors, pour tout objet $c$ de $C$, on a un isomorphisme
  canonique $(\tr{D}{c})^\o \simeq \cotr{D^\o}{c}$. De plus, le \oo-foncteur
  \smash{$(\tr{(u, \alpha)}{c})^\o : (\tr{A}{c})^\o \to (\tr{B}{c})^\o$}
  s'identifie au \oo-foncteur
  \smash{$\cotr{(u^\o, \alpha^\o)}{c} : \cotr{A^\o}{c} \to \cotr{B^\o}{c}$}.
  Ainsi, puisque, en vertu du théorème précédent, les équivalences de Thomason
  sont stables par la dualité totale, l'hypothèse de l'énoncé est
  équivalente au fait que, pour tout objet $c$ de $C$, le \oo-foncteur
  \smash{$\cotr{(u^\o, \alpha^\o)}{c} : \cotr{A^\o}{c} \to \cotr{B^\o}{c}$}
  est une équivalence de Thomason, c'est-à-dire à l'hypothèse du théorème A
  pour les tranches au-dessous (voir \cite[théorème~7.8]{AraMaltsiThmAII}).
  On peut donc appliquer ce théorème A et on obtient que le \oo-foncteur
  $u^\o$ est une équivalence de Thomason. En appliquant de nouveau le
  théorème précédent, on en déduit que $u$ est une équivalence de Thomason,
  ce qu'on voulait démontrer.
\end{proof}

De même, dans \cite{AraThmB}, le premier auteur a démontré un théorème B
de Quillen \oo-catégorique pour les tranches au-dessous. On en déduit un
théorème B pour les tranches au-dessus :

\begin{corollary}
  Soit $u : A \to B$ un \oo-foncteur tel que, pour toute $1$-cellule $f : b
  \to b'$ de $B$, le \oo-foncteur canonique $\tr{A}{f} : \tr{A}{b} \to \tr{A}{b'}$
  soit une équivalence de Thomason. Alors, pour tout objet $b$ de $B$,
  le carré cartésien
  \[
    \xymatrix{
      \tr{A}{b} \pullbackcorner \ar[d]_{\tr{u}{b}} \ar[r] & A \ar[d]^u \\
      \tr{B}{b} \ar[r] & B \pbox{,}
    }
  \]
  où les flèches horizontales sont les \oo-foncteurs d'oubli, est un carré
  homotopiquement cartésien.
\end{corollary}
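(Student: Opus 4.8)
The plan is to deduce this, exactly as the previous corollary was deduced from Quillen's Theorem~A below, from the $\infty$-categorical version of Quillen's Theorem~B for the slices below established in \cite{AraThmB}: one transports the statement through the total duality, applies Theorem~B below, and transports the resulting homotopy Cartesian square back, using that Thomason equivalences — hence homotopy Cartesian squares — are stable under the total duality (Theorem~\ref{thm:dual_Thomason}). Recall that for an $\infty$-category $D$ one writes $D^\o = D_{\N\sauf\{0\}}(D)$ for its total dual, and that $D \mapsto D^\o$ is an involutive automorphism of $\ooCat$ which by Theorem~\ref{thm:dual_Thomason} preserves Thomason equivalences. As recalled in the proof of the previous corollary (see \cite[remarque~6.37]{AraMaltsiJoint}), for $D$ an $\infty$-category over $C$ and $c$ an object of $C$ there is a canonical isomorphism $(\tr{D}{c})^\o \simeq \cotr{D^\o}{c}$, natural in the evident sense, compatible with the forgetful $\infty$-functors $\tr{D}{c} \to D$ and $\cotr{D^\o}{c} \to D^\o$; moreover the total duality interchanges the covariant functoriality $b \mapsto \tr{A}{b}$ (postcomposition along $1$-cells of $B$) with the contravariant functoriality $b \mapsto \cotr{A^\o}{b}$ (precomposition along $1$-cells of $B^\o$), via the direction-reversing bijection $f \mapsto f^\o$ between the $1$-cells of $B$ and those of $B^\o$.

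First I would apply the total duality to $u : A \to B$, obtaining $u^\o : A^\o \to B^\o$. By the compatibilities above and the stability of Thomason equivalences under the total duality, the hypothesis — that $\tr{A}{f} : \tr{A}{b} \to \tr{A}{b'}$ is a Thomason equivalence for every $1$-cell $f : b \to b'$ of $B$ — is equivalent to the assertion that $\cotr{A^\o}{g} : \cotr{A^\o}{b} \to \cotr{A^\o}{b'}$ is a Thomason equivalence for every $1$-cell $g : b' \to b$ of $B^\o$, which is exactly the hypothesis of the $\infty$-categorical Theorem~B for slices below applied to $u^\o$. That theorem then gives, for every object $b$ of $B^\o$ (equivalently, of $B$), that the square
\[
  \xymatrix{
    \cotr{A^\o}{b} \pullbackcorner \ar[d]_{\cotr{u^\o}{b}} \ar[r] & A^\o \ar[d]^{u^\o} \\
    \cotr{B^\o}{b} \ar[r] & B^\o
  }
\]
whose horizontal arrows are the forgetful $\infty$-functors is homotopy Cartesian.

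It then remains to apply the total duality to this square. Using the canonical isomorphisms $(\cotr{D^\o}{b})^\o \simeq \tr{D}{b}$ for $D$ equal to $A$ and $B$ (the dual of the isomorphism recalled above, applied to $D^\o$), together with their compatibility with the forgetful $\infty$-functors and with the $\cotr{u^\o}{b}$-type $\infty$-functor, one identifies the image under the total duality of the displayed square with the square of the statement. Since the total duality is an automorphism of $\ooCat$ preserving Thomason equivalences, it preserves homotopy Cartesian squares, so the square of the statement is homotopy Cartesian, as wanted. \textbf{The main obstacle} I expect is precisely this last point: verifying that the total duality preserves homotopy Cartesian squares. If the notion of homotopy Cartesian square used in \cite{AraThmB} is the intrinsic one — the comparison $\infty$-functor to the homotopy pullback in $(\ooCat, \text{Thomason equivalences})$ is a Thomason equivalence — this is immediate, an auto-equivalence of a homotopy theory preserving homotopy pullbacks; if it is phrased through the Street nerve, one instead transports homotopy Cartesianness along the natural zigzag of objectwise Thomason equivalences $i_{\cDelta}\NS \otd \bullet \tod i_{\cDelta}\NS_J$ furnished by the remark following Theorem~\ref{thm:dual_Thomason} (with $J = \N\sauf\{0\}$, so that $\NS_J = \NS D^\o$), checking that this comparison is natural enough to yield an objectwise weak equivalence between the two square-shaped diagrams, and that homotopy Cartesianness passes between $\ooCat$ and simplicial sets along the Street nerve and the category-of-elements functor. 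A secondary, routine point is the bookkeeping of the directions of $1$-cells and of the variances of the slice constructions under the total duality, carried out exactly as in the proof of the previous corollary.
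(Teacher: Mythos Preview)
Your proposal is correct and follows essentially the same route as the paper: apply the total duality to $u$, invoke the Theorem~B for slices below from \cite{AraThmB} on $u^\o$, and transport the resulting homotopy Cartesian square back using that the total duality is an involution preserving Thomason equivalences and hence homotopy Cartesian squares. The paper dispatches your ``main obstacle'' in one clause (the total dual being an involution preserving weak equivalences, it respects homotopy Cartesian squares), without going through the nerve-level zigzag you sketch as an alternative.
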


\begin{proof}
  Le résultat se déduit de l'énoncé analogue pour les tranches au-dessous
  (voir \cite[corollaire 3.11]{AraThmB}) en utilisant le
  théorème \ref{thm:dual_Thomason}, comme dans la preuve précédente. Plus
  précisément, on applique cet énoncé au foncteur $u^\o$, les hypothèses
  étant vérifiées en vertu de la stabilité des équivalences de Thomason par
  dualité, et on obtient que le carré qui se déduit du carré de l'énoncé en
  appliquant la dualité totale est homotopiquement cartésien. Or,
  le foncteur dual total étant une involution préservant les équivalences
  faibles, il respecte les carrés homotopiquement cartésiens, et le carré de
  l'énoncé est donc bien homotopiquement cartésien.
\end{proof}

\begin{remark}
  Les deux corollaires précédents sont des exemples de résultats qui se
  déduisent de le stabilité des équivalences de Thomason par dualité.
  Dans \cite{AraMaltsiThmAII} et \cite{AraThmB}, nous avons annoncé un
  certain nombre d'autres tels résultats ; ceux-ci se démontrent tous de
  manière similaire aux deux corollaires précédents. On renvoie en
  particulier aux remarques 5.28 et 7.13 de \cite{AraMaltsiThmAII} et aux
  remarques 3.16 et 4.5 de \cite{AraThmB}.
\end{remark}

\appendix

\section{Rétractes par transformation}
\label{app}

Le but de cet appendice est de rappeler les définitions et quelques
propriétés des rétractes par transformation lax et oplax, et de démontrer un
des points clés de la preuve de la proposition~\ref{prop:zigzag_base}, à
savoir que le \oo-foncteur canonique $\On{n} \to \Deltan{n}$ qui y est
considéré est la rétraction d'un rétracte par transformation oplax triviale
sur les objets.

\begin{paragraph}\label{paragr:retr_par_trans}
  Soit $i : A \to B$ un \oo-foncteur. On dit que $i$ est un \ndef{rétracte par
  transformation oplax} si $i$ admet une rétraction $r : B \to A$
  (c'est-à-dire un \oo-foncteur vérifiant~$ri = \id{A}$) pour laquelle il
  existe une transformation oplax $\alpha$ entre $ir$ et $\id{B}$. Dans une
  telle situation, on dit aussi que $r$ est la \ndef{rétraction d'un
  rétracte par transformation oplax}. On parle de \ndef{rétracte par
  transformation oplax triviale sur les objets} si la transformation oplax
  entre $ir$ et $\id{B}$ peut être choisie triviale sur les objets (au sens
  introduit au paragraphe~\ref{paragr:desc_Homit}).

  On définit des variantes lax de ces notions en remplaçant la
  transformation oplax~$\alpha$ par une transformation lax.
\end{paragraph}

\begin{theorem}\label{thm:nerf_retr_trans}
  Soit $i : A \to B$ un rétracte par transformation lax ou oplax. Alors
  $Ni : NA \to NB$ est un rétracte par déformation simplicial et en
  particulier une équivalence faible.
\end{theorem}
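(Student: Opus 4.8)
L'idée est de lire directement la structure de rétracte par déformation simplicial sur celle de rétracte par transformation, la seule difficulté résidant dans le fait que le nerf de Street transforme les transformations lax et oplax en homotopies simpliciales. Supposons d'abord que $i : A \to B$ est un rétracte par transformation oplax, et fixons une rétraction $r : B \to A$, vérifiant donc $ri = \id{A}$, ainsi qu'une transformation oplax $\alpha$ entre $ir$ et $\id{B}$. En appliquant le foncteur nerf $N$, on obtient des morphismes d'ensembles simpliciaux $Ni : NA \to NB$ et $Nr : NB \to NA$ avec $Nr \circ Ni = N(ri) = \id{NA}$ ; ainsi, $NA$ est un rétracte de $NB$ et $Ni$ un monomorphisme scindé.

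Il reste à produire une homotopie simpliciale entre $Ni \circ Nr = N(ir)$ et $\id{NB} = N(\id{B})$, homotopie que fournira $\alpha$ en vertu du fait suivant, que l'on citera de travaux antérieurs ou que l'on établira comme lemme préliminaire : \emph{si $u, v : C \to D$ sont deux \oo-foncteurs et $\beta : \Dn{1} \otimes C \to D$ une transformation oplax de $u$ vers $v$, alors $Nu$ et $Nv$ sont simplicialement homotopes, par une homotopie naturelle en $(u, v, \beta)$.} Appliqué à $\beta = \alpha$, $u = ir$ et $v = \id{B}$, ce fait fournit l'homotopie recherchée ; par conséquent, $Ni$ fait de $NA$ un rétracte par déformation de $NB$, et en particulier est une équivalence d'homotopie simpliciale, donc une équivalence faible. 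Le cas d'un rétracte par transformation lax se traite de la même manière, à l'aide de la variante lax du fait ci-dessus, à savoir qu'une transformation lax $C \otimes \Dn{1} \to D$ de $u$ vers $v$ induit elle aussi une homotopie simpliciale $Nu \simeq Nv$, variante qui s'en déduit formellement par dualité.

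Pour établir ce fait, on procéderait comme suit. Une homotopie simpliciale $H : \Delta^1 \times NC \to ND$ de $Nu$ vers $Nv$ équivaut à la donnée, pour tout $p$-simplexe $c : \On{p} \to C$ de $NC$, d'une famille compatible de $p$-simplexes de $ND$ indexée par les morphismes $\Deltan{p} \to \Deltan{1}$ ; de façon équivalente, à la donnée de morphismes $h_k : (NC)_p \to (ND)_{p+1}$, pour $0 \le k \le p$, satisfaisant les identités simpliciales du prisme. On poserait $h_k(c)$ égal au composé $\On{p+1} \to \Dn{1} \otimes \On{p} \to \Dn{1} \otimes C \to D$, où le premier \oo-foncteur est celui associé au $k$-ième mélange (application croissante injective) $\Deltan{p+1} \to \Deltan{1} \times \Deltan{p}$, le deuxième est $\Dn{1} \otimes c$ et le troisième est $\beta$. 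L'existence de ce premier \oo-foncteur, la vérification des identités du prisme et l'identification des deux bords de $H$ avec $Nu$ (face $\Deltan{p} \times \{0\}$) et $Nv$ (face $\Deltan{p} \times \{1\}$) résulteraient de la description du produit de Gray à l'aide des complexes dirigés augmentés de Steiner, développée dans l'appendice~A de \cite{AraMaltsiJoint}, appliquée au prisme $\Deltan{1} \times \Deltan{p}$.

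Le point délicat est cette dernière étape, à savoir rendre précise et vérifier l'interaction de l'objet cosimplicial $\cO$ des orientaux avec le cylindre de Gray $\Dn{1} \otimes {-}$, c'est-à-dire essentiellement la combinatoire du prisme dans le monde des complexes dirigés augmentés. La réduction de ce lemme à l'énoncé du théorème est, quant à elle, purement formelle.
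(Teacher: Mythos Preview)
Your approach is sound and coincides with what the cited reference \cite[théorème~A.11]{AraMaltsiThmAII} actually does; the paper's own proof here is simply a citation of that result (and of \cite[paragraphe~A.18]{AraMaltsiThmAII} for the lax case). The reduction to the lemma ``an oplax transformation induces a simplicial homotopy after applying $N$'' is exactly right, and your sketch of the homotopy via the shuffle inclusions $\Deltan{p+1} \hookrightarrow \Deltan{1} \times \Deltan{p}$, lifted to \oo-foncteurs $\On{p+1} \to \Dn{1} \otimes \On{p}$ through the Steiner machinery of \cite{AraMaltsiJoint}, is the standard route.

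One point deserves care: your claim that the lax case ``s'en déduit formellement par dualité'' is too quick. A duality $D_J$ of $\ooCat$ does exchange lax and oplax transformations, but to transport the conclusion back you would need to know that the Street nerve is compatible with $D_J$, i.e.\ that $N$ and $N \circ D_J$ define the same weak equivalences (or, more strongly, that $N(D_J(C))$ and $N(C)$ are naturally weakly equivalent). In the present paper this compatibility is precisely Theorem~\ref{thm:dual_Thomason}, which is proved \emph{via} the comparison results of Sections~3 and~5, themselves relying on the theorem you are proving --- so invoking it here would be circular. The clean fix is to observe that the lax argument is symmetric rather than dual: one uses the shuffle maps $\On{p+1} \to \On{p} \otimes \Dn{1}$ (factors swapped) and proceeds identically. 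This is what \cite[paragraphe~A.18]{AraMaltsiThmAII} records, and it is why the paper cites the two cases separately.
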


\begin{proof}
  Cela résulte immédiatement de \cite[théorème A.11]{AraMaltsiThmAII} dans
  le cas oplax et de \cite[paragraphe~A.18]{AraMaltsiThmAII} dans le cas lax.
\end{proof}

La preuve de la proposition~\ref{prop:zigzag_base} s'appuie sur l'existence
de deux rétractes par transformation :

\begin{proposition}\label{prop:On_contr}
  Pour tout $n \ge 0$, l'unique \oo-foncteur $\On{n} \to \Deltan{0}$ est la
  rétraction d'un rétracte par transformation lax. En particulier, l'ensemble
  simplicial $N \On{n}$ est contractile.
\end{proposition}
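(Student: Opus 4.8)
The plan is to exhibit an explicit lax transformation from the identity of $\On{n}$ to the composite $\On{n} \to \Deltan{0} \to \On{n}$ (the constant $\oo$-functor at the object $0$). Since the $\oo$-functor $\On{n} \to \Deltan{0}$ is obviously a retraction of the inclusion $\Deltan{0} = \On{0} \hookto \On{n}$ picking out the object $0$, producing such a lax transformation will show that $\On{n} \to \Deltan{0}$ is the retraction of a retract by lax transformation. The final claim about contractibility of $N\On{n}$ is then immediate from Theorem~\ref{thm:nerf_retr_trans}, which says the Street nerve of a retract by lax transformation is a simplicial deformation retract, hence a weak equivalence, so $N\On{n}$ is weakly equivalent to $N\Deltan{0} = \Deltan{0}$.

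To build the lax transformation, I would work with Steiner's theory of augmented directed complexes, via which the orientals $\On{n}$ are defined (as recalled in \cite[paragraphe 3.11]{AraMaltsiThmAI}): $\On{n}$ corresponds to the augmented directed complex whose underlying chain complex is that of the standard $n$-simplex, with the obvious basis of nondegenerate faces. A lax transformation $\id{\On{n}} \Rightarrow \mathrm{cst}_0$ is, by definition, an $\oo$-functor $\On{n} \otimes \Dn{1} \to \On{n}$ restricting appropriately on the two ends; and $\On{n} \otimes \Dn{1}$ is computed by the tensor product of augmented directed complexes à la Steiner (see \cite[appendice~A]{AraMaltsiJoint}). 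So the task reduces to writing down a morphism of augmented directed complexes $K(\On{n}) \otimes K(\Dn{1}) \to K(\On{n})$ with the prescribed boundary behaviour. Concretely, on the generator $e_S \otimes e_1$ (where $e_S$ runs over nondegenerate faces $S \subseteq \{0,\dots,n\}$ and $e_1$ is the $1$-dimensional generator of $K(\Dn{1})$) one sends it to $e_{S \cup \{0\}}$ if $0 \notin S$ and to $0$ (a degenerate cell) if $0 \in S$; on $e_S \otimes e_0$ and $e_S \otimes e'_0$ one uses the identity and the constant map respectively. One checks this is compatible with differentials using the cone-type identity $\partial(e_{S\cup\{0\}}) = e_S - (\text{lower terms involving } 0)$, which is exactly the combinatorial content of the simplex being a cone on its $0$-th vertex. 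Finally, one verifies that Steiner's positivity/unitality conditions (``strong Steiner'' / loop-free with atomic basis) are preserved, so that this morphism of complexes genuinely comes from an $\oo$-functor.

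The main obstacle I expect is the bookkeeping of signs and of which generators map to degeneracies, i.e.\ checking that the proposed assignment really is a chain map respecting the augmentation and the two submodules $(\cdot)^\ast_{\le k}$ of positive elements in each degree — this is where Steiner's framework has to be invoked carefully rather than waved at. An alternative, possibly cleaner, route avoiding the explicit complex: observe that $\On{n}$ has $0$ as an \emph{initial} object in a suitable lax sense, or more precisely use that the join $\On{0} \joint \On{n-1} \simeq \On{n}$ (Street's orientals are compatible with joins) together with a general statement that $\oo$-categories of the form $\Dn{0} \joint D$ admit a lax retraction onto $\Dn{0}$; such a statement may already be available in \cite{AraMaltsiJoint}. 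Either way the crux is the same: manufacturing one explicit lax transformation, after which Theorem~\ref{thm:nerf_retr_trans} does the rest.
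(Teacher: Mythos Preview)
Your overall strategy is sound and is essentially what lies behind the references the paper cites: the results \cite[proposition~5.16]{AraMaltsiThmAII} and \cite[remarque~B.4.11]{AraMaltsiJoint} establish precisely the lax contraction of $\On{n}$ onto a point, and the join description $\On{n} \simeq \Dn{0} \joint \On{n-1}$ that you mention as an alternative is in fact the cleaner route taken there. Your direct Steiner-theoretic construction (the cone homotopy on the normalized chains of $\Deltan{n}$, which visibly preserves positivity) is a perfectly valid substitute; once one has a positive chain homotopy, \cite[proposition~B.4.7 and théorème~2.11]{AraMaltsiJoint} convert it into a genuine transformation, and then Theorem~\ref{thm:nerf_retr_trans} gives the contractibility of $N\On{n}$.

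There is, however, one concrete slip: the transformation cannot go from $\id{\On{n}}$ to $\mathrm{cst}_0$ as you state. For any object $i > 0$ of $\On{n}$, the component at $i$ of such a transformation would be a $1$-cell $i \to 0$, and $\On{n}$ has none (all $1$-cells point from smaller to larger indices). The transformation you are actually building, via the cone formula $e_S \mapsto e_{\{0\}\cup S}$, goes the other way, from $\mathrm{cst}_0$ to $\id{\On{n}}$; accordingly, the restrictions to the two endpoints of $\Dn{1}$ must be swapped relative to what you wrote. This is harmless for the proposition, since the definition of ``rétracte par transformation lax'' only asks for a lax transformation \emph{between} $ir$ and $\id{}$ in either direction, but you should correct the stated direction before verifying the chain-map identity (otherwise the signs will never match). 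Your worry about checking ``strong Steiner'' conditions on the map is also slightly misplaced: what matters is that the \emph{source} $\lambda(\On{n}) \otimes \lambda(\Dn{1})$ is a strong Steiner complex (it is), after which any morphism of augmented directed complexes yields an $\infty$-functor; the only genuine verification is that your $h$ is a chain map and sends basis elements to sums of basis elements with nonnegative coefficients, which your cone formula does.
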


\begin{proof}
  L'assertion sur le \oo-foncteur $\On{n} \to \Deltan{0}$ résulte de
  \cite[proposition~5.16]{AraMaltsiThmAII} et de \cite[remarque
  B.4.11]{AraMaltsiJoint}. Le fait que $N \On{n}$ est contractile est alors
  conséquence du théorème précédent.
\end{proof}

\begin{proposition}\label{prop:Op_Dp_retr}
  Pour tout $n \ge 0$, le \oo-foncteur canonique $\On{n} \to \Deltan{n}$
  (voir le paragraphe~\ref{paragr:zigzag-1}) est la rétraction d'un
  rétracte par transformation oplax triviale sur les objets.
\end{proposition}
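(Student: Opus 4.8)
The plan is to realize the desired retract by a completely explicit construction in Steiner's category of augmented directed complexes (ADCs for short), as announced in the introduction. Recall that, in Steiner's description, the oriental $\On{n}$ is the $\infty$-category $\nu(K_n)$, where $K_n$ is the strong Steiner complex whose group of $k$-chains is free on the subsets $S \subseteq \{0, \dots, n\}$ of cardinality $k+1$ (the generator indexed by $S$ being written $[S]$), with the simplicial differential, with augmentation sending each vertex to $1$, and with positive part the $\N$-span of the generators. The section $i : \Deltan{n} \to \On{n}$ is the evident functor sending the object $j$ to $j$ and the generating arrow $j-1 \to j$ to the $1$-cell $[\{j-1, j\}]$; one checks at once that $ri = \id{\Deltan{n}}$ for $r : \On{n} \to \Deltan{n}$ the canonical functor of paragraph~\ref{paragr:zigzag-1}. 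It therefore suffices to produce an oplax transformation between $\id{\On{n}}$ and $ir$ that is trivial on objects; inspection of the case $n = 2$ shows that it must go \emph{from} $\id{\On{n}}$ \emph{to} $ir$ (on the $1$-cell $[\{0,2\}]$ the associated $2$-cell must be the generator $[\{0,1,2\}]$ of $\On{2}$, whose source is $[\{0,2\}]$, whose target is the composite through the vertex $1$, and which has no inverse). Concretely, I need an $\infty$-functor $h : \Dn{1} \otimes \On{n} \to \On{n}$ whose restrictions along $0 \otimes \On{n}$ and $1 \otimes \On{n}$ are $\id{\On{n}}$ and $ir$ respectively, and with $h_a$ an identity for every object $a$ of $\On{n}$.

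The next step reduces this to a problem about chain maps. The Gray tensor product of two strong Steiner complexes is again strong Steiner, and is carried by $\nu$ to the Gray tensor product of $\infty$-categories (see \cite[appendice~A]{AraMaltsiJoint} and \cite{Steiner}); hence $\Dn{1} \otimes \On{n} = \nu(L \otimes K_n)$, where $L$ is the strong Steiner complex of $\Dn{1}$ (generators $0$ and $1$ in degree $0$, a generator $e$ in degree $1$ with $\partial e = 1 - 0$), and $L \otimes K_n$ is strong Steiner. By the full faithfulness of $\nu$ on strong Steiner complexes, giving $h$ is the same as giving a morphism of ADCs $L \otimes K_n \to K_n$, that is, an augmentation- and positivity-preserving chain map. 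On the standard basis of $L \otimes K_n$, namely the $0 \otimes [S]$, the $1 \otimes [S]$ and the $e \otimes [S]$, the first two families are forced: $h(0 \otimes [S]) = [S]$ and $h(1 \otimes [S]) = \rho([S])$, where $\rho$ is the chain map underlying $ir$, given by $\rho([S]) = 0$ for $\lvert S \rvert \ge 3$, $\rho([\{i,j\}]) = \sum_{i \le t < j}[\{t, t+1\}]$ and $\rho([\{i\}]) = [\{i\}]$ (readily seen to be an augmentation- and positivity-preserving chain map, and the compatibility of $h$ with $\partial$ on these two families is then automatic). Writing $\Phi(S) := h(e \otimes [S])$ and extending $\Phi$ by linearity to an operator of degree $+1$ on $K_n$, the compatibility of $h$ with $\partial$ on the last family becomes exactly the homotopy identity $\partial\Phi + \Phi\partial = \rho - \id{K_n}$; triviality on objects becomes the requirement $\Phi(\{a\}) = 0$ on vertices; and positivity becomes the requirement that $\Phi$ have non-negative coefficients.

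The heart of the proof is to exhibit such a $\Phi$. I would set $\Phi(\{s_0\}) = 0$ and, for $S = \{s_0 < s_1 < \dots < s_p\}$ with $p \ge 1$,
\[
  \Phi(S) \;=\; \sum_{s_0 \le m \le s_1 - 2} [\{m,\, m+1,\, s_1,\, s_2,\, \dots,\, s_p\}]
\]
(an empty sum, hence $0$, when $s_1 = s_0 + 1$). All coefficients equal $+1$, so $\Phi$ is positivity-preserving, and it vanishes on vertices. It then remains to check the identity $\partial\Phi(S) = \rho([S]) - [S] - \Phi(\partial[S])$ for every $S$; this is a direct though somewhat delicate computation with the simplicial identities, which I would organize by distinguishing the cases $s_1 = s_0 + 1$ and $s_1 \ge s_0 + 2$ and by telescoping the sums over $m$ (I have checked it in low dimension, e.g.\ $\Phi(\{0,2,4\}) = [\{0,1,2,4\}]$ and $\Phi(\{0,2,3,4\}) = [\{0,1,2,3,4\}]$ have the predicted boundaries). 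I expect this --- and finding the correct closed formula at all --- to be the main obstacle: the naive guesses, such as imposing $\Phi(S) = 0$ for $\lvert S \rvert \ge 3$ or using a ``fan from $s_0$'' triangulation on edges, satisfy the homotopy identity only up to a cycle and fail positivity already for $\lvert S \rvert = 3$ (one is forced into a chain such as $-[\{0,1,2,3\}]$), so the non-negativity of the coefficients genuinely constrains the formula.

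Granting the identity, $h$ is a well-defined morphism of ADCs, hence, by full faithfulness of $\nu$, an $\infty$-functor $h : \Dn{1} \otimes \On{n} \to \On{n}$; its restrictions to $0 \otimes \On{n}$ and $1 \otimes \On{n}$ are $\id{\On{n}}$ and $ir$ by construction, so $h$ is an oplax transformation between $\id{\On{n}}$ and $ir$, and it is trivial on objects because $\Phi$ vanishes on vertices. Combined with $ri = \id{\Deltan{n}}$, this exhibits the canonical functor $\On{n} \to \Deltan{n}$ as the retraction of a retract by oplax transformation trivial on objects, as required.
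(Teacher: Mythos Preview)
Your approach is essentially the same as the paper's: the homotopy $\Phi$ you write down is, after the reindexing $k = m+1$, identical to the paper's formula $h(i_0,\dots,i_p) = \sum_{i_0 < k < i_1}(k-1,k,i_1,\dots,i_p)$, and the reduction to a positivity-preserving chain homotopy is the same (the paper cites \cite[proposition~B.4.7 et théorème~2.11]{AraMaltsiJoint} directly rather than passing through $L \otimes K_n$). The verification of $\partial\Phi + \Phi\partial = \rho - \id$ that you leave as a sketch is carried out in the paper by a case split on $p = 0$, $p = 1$, $p \ge 2$ rather than on whether $s_1 = s_0 + 1$.
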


\begin{proof}
  Appelons $r$ le \oo-foncteur de l'énoncé. Ce foncteur admet une unique
  section $s : \Deltan{n} \to \On{n}$. Explicitement, celle-ci envoie,
  pour $0 \le i < n$, l'unique flèche de $i$ vers $i+1$ dans $\Deltan{n}$
  sur l'unique $1$-cellule de $i$ vers $i+1$ dans~$\On{n}$. Nous allons
  construire une transformation oplax $\alpha : \id{\On{n}} \tod sr$.
  Puisque pour tout objet $i$ de~$\On{n}$, l'unique $1$-cellule de $i$ vers
  $i$ est $\id{i}$, une telle transformation sera nécessairement triviale
  sur les objets.

  Pour construire $\alpha$, nous allons utiliser la théorie des complexes
  dirigés augmentés de Steiner \cite{Steiner}. Rappelons qu'un \ndef{complexe
  dirigé augmenté} est un complexe de chaînes~$C$ de groupes abéliens en
  degrés positifs, muni d'une augmentation et, pour tout $p \ge 0$, d'un
  sous-monoïde $C_p^\ast$ de $C_p^{}$ des $p$-chaînes \ndef{positives}. Les
  \ndef{morphismes de complexes dirigés augmentés} sont les morphismes de complexes
  de chaînes augmentés qui envoient les chaînes positives sur des chaînes
  positives. On obtient ainsi une catégorie~$\Cda$. Dans~\cite{Steiner},
  Steiner construit un foncteur de « linéarisation » $\lambda : \ooCat \to
  \Cda$ dont nous ne rappellerons pas la définition générale.
  Notons simplement que dans le cas d'une \oo-catégorie $C$ libre au sens
  des polygraphes (c'est le cas de $\On{n}$), les $p$\=/chaînes de
  $\lambda(C)$ sont librement engendrées par les générateurs de dimension $p$
  de $C$ et la différentielle est donnée par le but moins la source.

  Ainsi, le complexe dirigé augmenté $\lambda(\On{n})$ se décrit de la
  manière suivante :
  \begin{itemize}
    \item Le complexe de chaînes sous-jacent est le complexe de chaînes
    normalisé associé à l'ensemble simplicial $\Deltan{n}$. Ainsi,
    $\lambda(\On{n})_p$ est le groupe abélien libre sur l'ensemble
    \[
    \{(i_0, \dots, i_p) \mid 0 \le i_0 < \cdots < i_p \le n\}
    \]
    et la différentielle, pour $p \ge 1$, est définie par
    \[ d(i_0, \dots, i_p) = \sum_{l = 0}^p (-1)^l (i_0, \dots, \hat{i_l},
    \dots, i_p), \]
    où $(i_0, \dots, \hat{i_l}, \dots, i_p) = (i_0, \dots, i_{l-1}, i_{l+1},
    \dots, i_p)$.
    \item L'augmentation $e : \lambda(\On{n})_0 \to \Z$ envoie $(i_0)$ sur
    $1$.
    \item Enfin, le sous-monoïde $\lambda(\On{n})_p^\ast$ consiste en
    l'ensemble des $p$-chaînes dont les coefficients sont positifs (au sens
    large).
  \end{itemize}
  Quant au morphisme $\lambda(sr) : \lambda(\On{n}) \to \lambda(\On{n})$,
  puisque le \oo-foncteur $sr$ est l'identité sur les objets, envoie
  pour tout $0 \le i < j \le n$ la flèche « directe » $(i, j)$ de $i$
  vers~$j$ dans $\On{n}$ sur le composé $(j-1, j) (j-2, j-1) \cdots (i,
  i+1)$ et envoie toute cellule de dimension au moins $2$ sur une identité,
  on a
  \[ \lambda(sr)(i_0, \dots, i_p) =
  \begin{cases}
    (i_0) & \text{si $p = 0$,} \\
    \sum_{i_0 < k \le i_1} (k-1, k) & \text{si $p = 1$,} \\
     0 & \text{si $p \ge 2$.}
  \end{cases}
  \]
  En vertu de \cite[proposition B.4.7 et théorème 2.11]{AraMaltsiJoint}, la
  donnée d'une transformation oplax de $\id{\On{n}}$ vers $sr$ est
  équivalente à celle d'une homotopie de $\id{\lambda(\On{n})}$ vers
  $\lambda(sr)$, une homotopie entre morphismes de complexes dirigés
  augmentés étant simplement une homotopie entre les morphismes de complexes
  de chaînes sous-jacents qui envoie les chaînes positives sur des chaînes
  positives.  Il nous suffit donc de définir une telle homotopie.

  Posons
  \[ h(i_0, \dots, i_p) =
  \begin{cases}
     0 & \text{si $p = 0$,} \\
     \sum_{i_0 < k < i_1} (k-1, k, i_1, \dots, i_p) & \text{si $p \ge 1$.}
  \end{cases}
  \]
  Par définition, $h$ envoie bien les chaînes positives sur des chaînes
  positives et il s'agit de vérifier que $h$ est une homotopie de complexes
  de chaînes.
  \begin{itemize}
    \item Si $p = 0$, on a
    \[ dh(i_0) = d(0) = 0 = (i_0) - (i_0) = \lambda(sr)(i_0) -
    \id{\lambda(\On{n})}(i_0). \]
    \item Si $p = 1$, on a
    % \[
    {
      \allowdisplaybreaks
      \begin{align*}
        dh(i_0, i_1) + hd(i_0, i_1) & =
        \sum_{i_0 < k < i_1} d(k-1, k, i_1) + h(i_1) - h(i_0) \\
        &  =
        \sum_{i_0 < k < i_1} (k, i_1)
        -
        \sum_{i_0 < k < i_1} (k-1, i_1)
        +
        \sum_{i_0 < k < i_1} (k-1, k) \\*
        &  =
        (i_1 - 1, i_1)
        -
        (i_0, i_1)
        +
        \sum_{i_0 < k < i_1} (k-1, k) \\
        &  =
        \sum_{i_0 < k \le i_1} (k-1, k)
        -
        (i_0, i_1) \\*
        & =
        \lambda(sr)(i_0, i_1) - \id{\lambda(\On{n})}(i_0, i_1).
      \end{align*}
    }
    % \]
    \item Enfin, si $p \ge 2$, on a
    \[
      \begin{split}
        dh(i_0, \dots, i_p)
        & = \sum_{i_0 < k < i_1} d(k-1, k, i_1, \dots, i_p) \\
        & = 
        \sum_{i_0 < k < i_1} (k, i_1, \dots, i_p)
        -
        \sum_{i_0 < k < i_1} (k-1, i_1, \dots, i_p) \\*
        & \phantom{=1}\qquad
        +
        \sum_{i_0 < k < i_1} \sum_{l=1}^p (-1)^{l+1} (k-1, k, i_1, \dots, \hat{i_l}, \dots, i_p) \\
        & = 
        (i_1-1, i_1, \dots, i_p)
        -
        (i_0, i_1, \dots, i_p) \\*
        & \phantom{=1}\qquad
        +
        \sum_{i_0 < k < i_1} \sum_{l=1}^p (-1)^{l+1} (k-1, k, i_1, \dots, \hat{i_l}, \dots, i_p) \\
      \end{split}
    \]
    et
    \[
      \begin{split}
        hd(i_0, \dots, i_p)
        & =
        \sum_{l=0}^p (-1)^l h(i_0, \dots, \hat{i_l}, \dots, i_p) \\
        & =
        \sum_{i_1 < k < i_2}^p (k-1, k, i_2, \dots, i_p)
        -
        \sum_{i_0 < k < i_2}^p (k-1, k, i_2, \dots, i_p) \\*
        & \phantom{=1}\qquad
        +
        \sum_{l=2}^p (-1)^l \sum_{i_0 < k < i_1} (k-1, k, i_1, \dots, \hat{i_l}, \dots, i_p) \\
        & =
        -
        \sum_{i_0 < k \le i_1}^p (k-1, k, i_2, \dots, i_p) \\*
        & \phantom{=1}\qquad
        +
        \sum_{l=2}^p (-1)^l \sum_{i_0 < k < i_1} (k-1, k, i_1, \dots, \hat{i_l}, \dots, i_p) \\
        & =
        -
        (i_1-1, i_1, i_2, \dots, i_p) \\*
        & \phantom{=1}\qquad
        +
        \sum_{l=1}^p (-1)^l \sum_{i_0 < k < i_1} (k-1, k, i_1, \dots,
        \hat{i_l}, \dots, i_p), \\
      \end{split}
    \]
    d'où
    \[
    \begin{split}
    dh(i_0, \dots, i_p) + hd(i_0, \dots, i_p) & = -(i_0, \dots, i_p) \\
    & = \lambda(sr)(i_0, \dots, i_p) - \id{\lambda(\On{n})}(i_0, \dots, i_p).
    \end{split}
    \]
  \end{itemize}
  Ceci achève de montrer que $h$ est bien une homotopie et démontre donc
  l'assertion.
\end{proof}

\bibliography{biblio}

\providecommand{\bysame}{\leavevmode ---\ }
\providecommand{\og}{``}
\providecommand{\fg}{''}
\providecommand{\smfandname}{\&}
\providecommand{\smfedsname}{\'eds.}
\providecommand{\smfedname}{\'ed.}
\providecommand{\smfmastersthesisname}{M\'emoire}
\providecommand{\smfphdthesisname}{Th\`ese}
\begin{thebibliography}{10}

\bibitem{Ara2Cat}
{\scshape D.~Ara} -- {\og Structures de cat\'egorie de mod\`eles \`a la
  {T}homason sur la cat\'egorie des {$2$\nobreakdash-}cat\'egories
  strictes\fg}, \emph{Cah. Topol. G\'eom. Diff\'er. Cat\'eg.} \textbf{56}
  (2015), no.~2, p.~83--108.

\bibitem{AraThmB}
\bysame , {\og A {Q}uillen {T}heorem {B} for strict {$\infty$}-categories\fg},
  \emph{J. Lond. Math. Soc. (2)} \textbf{100} (2019), no.~2, p.~470--497.

\bibitem{AraMaltsiNThom}
{\scshape D.~Ara {\normalfont \smfandname} G.~Maltsiniotis} -- {\og Vers une
  structure de cat\'egorie de mod\`eles \`a la {T}homason sur la cat\'egorie
  des {$n$}-cat\'egories strictes\fg}, \emph{Adv. Math.} \textbf{259} (2014),
  p.~557--654.

\bibitem{AraMaltsiCondE}
\bysame , {\og Le type d'homotopie de la {$\infty$}-catégorie associée à un
  complexe simplicial\fg}, \Preprint, 2015.

\bibitem{AraMaltsiThmAI}
\bysame , {\og Un théorème {A} de {Q}uillen pour les $\infty$-catégories
  strictes~{I}~: la preuve simpliciale\fg}, \emph{Adv. Math.} \textbf{328}
  (2018), p.~446--500.

\bibitem{AraMaltsiThmAII}
\bysame , {\og Un théorème {A} de {Q}uillen pour les \oo-catégories
  strictes~{II} : la preuve \oo-catégorique\fg}, \emph{High. Struct.}
  \textbf{4} (2020), no.~1, p.~284--388.

\bibitem{AraMaltsiJoint}
\bysame , {\og Joint et tranches pour les $\infty$-cat{\'e}gories strictes\fg},
  \emph{M\'{e}m. Soc. Math. Fr. (N.S.)} \textbf{165} (2020).

\bibitem{BergerWreath}
{\scshape C.~Berger} -- {\og Iterated wreath product of the simplex category
  and iterated loop spaces\fg}, \emph{Adv. Math.} \textbf{213} (2007), no.~1,
  p.~230--270.

\bibitem{BousKan}
{\scshape A.~K. Bousfield {\normalfont \smfandname} D.~M. Kan} --
  \emph{Homotopy limits, completions and localizations}, Lecture Notes in
  Mathematics, vol. 304, Springer-Verlag, 1972.

\bibitem{BullCegGeom2Cat}
{\scshape M.~Bullejos {\normalfont \smfandname} A.~M. Cegarra} -- {\og On the
  geometry of 2-categories and their classifying spaces\fg}, \emph{$K$-Theory}
  \textbf{29} (2003), no.~3, p.~211--229.

\bibitem{CegNervesBicat}
{\scshape P.~Carrasco, A.~M. Cegarra {\normalfont \smfandname} A.~R.
  Garz\'{o}n} -- {\og Nerves and classifying spaces for bicategories\fg},
  \emph{Algebr. Geom. Topol.} \textbf{10} (2010), no.~1, p.~219--274.

\bibitem{CegRealTri}
{\scshape A.~M. Cegarra {\normalfont \smfandname} B.~A. Heredia} -- {\og
  Comparing geometric realizations of tricategories\fg}, \emph{Algebr. Geom.
  Topol.} \textbf{14} (2014), no.~4, p.~1997--2064.

\bibitem{CisinskiLFM}
{\scshape D.-C. Cisinski} -- {\og Le localisateur fondamental minimal\fg},
  \emph{Cah. Topol. G\'eom. Diff\'er. Cat\'eg.} \textbf{45} (2004), no.~2,
  p.~109--140.

\bibitem{Cisinski}
\bysame , {\og Les pr\'efaisceaux comme mod\`eles des types d'homotopie\fg},
  \emph{Ast\'erisque} (2006), no.~308, p.~xxiv+390.

\bibitem{CisMaltsiTheta}
{\scshape D.-C. Cisinski {\normalfont \smfandname} G.~Maltsiniotis} -- {\og La
  cat\'egorie {$\Theta$} de {J}oyal est une cat\'egorie test\fg}, \emph{J. Pure
  Appl. Algebra} \textbf{215} (2011), no.~5, p.~962--982.

\bibitem{Gagna}
{\scshape A.~Gagna} -- {\og Strict {$n$}-categories and augmented directed
  complexes model homotopy types\fg}, \emph{Adv. Math.} \textbf{331} (2018),
  p.~542--564.

\bibitem{GrothPS}
{\scshape A.~Grothendieck} -- {\og Pursuing stacks\fg}, \Manuscript, 1983,
  \edited{G.~Maltsiniotis \and{} B.~Toën}, \toappear{Documents
  Mathématiques}.

\bibitem{Illusie}
{\scshape L.~Illusie} -- \emph{Complexe cotangent et d\'eformations {II}},
  Lecture Notes in Mathematics, vol. 283, Springer-Verlag, 1972.

\bibitem{JoyalTheta}
{\scshape A.~Joyal} -- {\og Disks, duality and {$\Theta$}-categories\fg},
  \Preprint, 1997.

\bibitem{LackIcons}
{\scshape S.~Lack} -- {\og Icons\fg}, \emph{Appl. Categ. Structures}
  \textbf{18} (2010), no.~3, p.~289--307.

\bibitem{Maltsi}
{\scshape G.~Maltsiniotis} -- {\og La th\'eorie de l'homotopie de
  {G}rothendieck\fg}, \emph{Ast\'erisque} (2005), no.~301, p.~vi+140.

\bibitem{QuillenHAKTI}
{\scshape D.~G. Quillen} -- {\og Higher algebraic {$K$}-theory. {I}\fg},
  {\selectlanguage{english}in \emph{Algebraic K-theory I: Higher
  K\nobreakdash-theories} (H.~Bass, éd.), Lecture Notes in Mathematics, vol.
  341, Springer-Verlag}, 1973, p.~85--147.

\bibitem{Steiner}
{\scshape R.~Steiner} -- {\og Omega-categories and chain complexes\fg},
  \emph{Homology Homotopy Appl.} \textbf{6} (2004), no.~1, p.~175--200.

\bibitem{StreetOrient}
{\scshape R.~Street} -- {\og The algebra of oriented simplexes\fg}, \emph{J.
  Pure Appl. Algebra} \textbf{49} (1987), no.~3, p.~283--335.

\end{thebibliography}
\bibliographystyle{mysmfplain}

\end{document}